	\numberwithin{equation}{section}
	\DeclareMathOperator{\inv}{inv}
	\newtheorem{thm}{Theorem}[section]
	\newtheorem{lemma}[thm]{Lemma}
	\newtheorem{coroll}[thm]{Corollary}
	\newtheorem{problem}[thm]{Problem}
	\theoremstyle{definition}
	\newtheorem{defi}{Definition}[section]
	\newtheorem{example}{Example}[section]
	\newtheorem*{remark}{Remark}
	\newcommand{\Sym}{\mathfrak{S}}
	\newcommand{\SymJ}{\mathfrak{J}}
	\newcommand{\SymK}{\mathfrak{K}}
	\newcommand{\SymA}{\mathfrak{A}}
	\newcommand{\SymP}{\mathfrak{P}}
	\newcommand{\String}[1]{\text{`$#1$'}}
\newcommand{\ch}[1]{\hbox to 0.9em {\hfil $#1$\hfil}} % 0.8
\newcommand{\cch}[1]{\hbox to 1.1em {\hfil $#1$\hfil}}
\newcommand{\ccch}[1]{\hbox to 2.2em {\hfil $#1$\hfil}}
\newcommand{\block}[2]{\begin{matrix}#1\\#2\end{matrix}}
\newcommand{\spred}{\kern -0.1em}
	\title[Computer assisted proof for Apwenian sequences]{Computer assisted proof for Apwenian sequences related to {H}ankel determinants} % <<<
	\date{May 21, 2015}
	\author{Hao Fu}
	\address{Institute for Interdisciplinary Information Sciences\\
	Tsinghua University\\
	Beijing,100084\\
	P.R.China}
	\email{fu-h13@mails.tsinghua.edu.cn}
  \thanks{The first author was supported by the National Basic Research Program of China Grant 2011CBA00300, 2011CBA00301, the National Natural Science Foundation of China Grant 61033001, 61361136003.}
	\keywords{Hankel determinant, 
	Thue--Morse sequence,  Apwenian sequence, permutation, computer assisted proof}
	\subjclass[2010]{05A05, 05A15, 11B50, 11B85, 11C20}
	\author{Guo-Niu HAN}
	\address{Institut de Recherche Math\'ematique Avanc\'ee\\
	Universit\'e de Strasbourg et CNRS\\
7 rue Ren\'e Descartes\\
	 67084 Strasbourg\\
	France} \email{guoniu.han@unistra.fr}
\begin{document}
	\begin{abstract}
		An infinite $\pm 1$-sequence is called {\it Apwenian} if its Hankel determinant of order $n$ divided by $2^{n-1}$ is an odd number for every positive integer $n$.
	In 1998, Allouche, Peyri\`ere, Wen and Wen discovered and 
	proved that
	the Thue--Morse sequence is an Apwenian sequence by direct determinant manipulations.
	Recently, Bugeaud and Han re-proved the latter result by means of an appropriate combinatorial method. By significantly improving the combinatorial method, 
	we prove that several other  Apwenian sequences  related to the Hankel determinants with Computer Assistance.
	\end{abstract}
\maketitle
%>>> 

\section{introduction}%<<<
For each infinite sequence
${\bf c}=(c_k)_{k\geq 0}$
and each nonnegative integer~$n$ 
the {\it Hankel determinant} of order $n$ of 
the sequence ${\bf c}$ is defined by
\begin{equation}\label{def:Hankel}
H_{n}({\bf c}):=
\begin{vmatrix}
  c_0 & c_{1}&\cdots & c_{n-1} \\
  c_{1} & c_{2}& \cdots & c_{n}\\
   \vdots & \vdots &\ddots & \vdots\\
   c_{n-1} & c_{n} & \cdots & c_{2n-2}
\end{vmatrix}.
\end{equation}
We also speak of the Hankel determinants of the power series 
$\tilde{\bf c}(x)=\sum_{k\geq 0} c_k x^k$ and write
$H_n(\tilde{\bf c}(x)) = H_n({\bf c})$.
The Hankel determinants are widely studied in Mathematics and,
in several cases, can be evaluated by  
basic determinant manipulation, $LU$-decomposition, or  Jacobi 
continued fraction (see, e.g., \cite{Kr98, Kr05, Fl80, Wa48, Mu23}).
However, the Hankel determinants studied in the present paper
apparently have no closed-form expressions, and require additional efforts
to obtain specific arithmetical properties.

An infinite $\pm 1$-sequence ${\bf c}=(c_k)_{k\geq 0} $ is called {\it Apwenian} if its Hankel determinant of order $n$ divided by $2^{n-1}$ is an odd number, i.e.,
$H_n({\bf c})/2^{n-1}\equiv 1\pmod2$, for all positive integer $n$.
The corresponding generating function or the power series $\tilde{\bf c}(x)$ is also said to be {\it Apwenian}.
Recall that the {\it Thue--Morse sequence}, denoted by
$${\bf e} =  (e_k)_{k\geq 0} = (1,-1,-1,1, -1, 1, 1, -1, -1,1,1,-1\ldots),$$ 
is a special $\pm 1$-sequence \cite{Wiki:pm:sequence},
defined by the
generating function
\begin{equation}\label{def:ThueMorse}
	\tilde{\bf e}(x)=\sum_{k=0}^\infty e_k x^k=\prod_{k=0}^\infty(1-x^{2^k}),
\end{equation}
or equivalently, by the recurrence relations
\begin{equation}\label{def:ThueMorse:Rec}
	e_0=1,\quad e_{2k}=e_k \text{ and } e_{2k+1}=-e_k \text{ for } k\geq 0.
\end{equation}
The Thue--Morse sequence is also called {\it Prouhet--Thue--Morse sequence}.
For other equivalent definitions and properties related to the sequence, see 
\cite{AS2003, AS1999, Wiki:ThueMorse, OEIS:A106400, OEIS:A010060}.
In 1998, 
Allouche, Peyri\`ere, Wen and Wen
established a congruence relation concerning the Hankel determinants
of the Thue--Morse sequence~\cite{APWW1998}.
\begin{thm} [APWW]
\label{thm:APWW}
The Thue--Morse sequence on $\{1, -1\}$ is Apwenian.
\end{thm}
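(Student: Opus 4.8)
The plan is to establish the Apwenian property by tracking the Hankel determinants $H_n(\mathbf e)$ modulo a power of $2$, exploiting the self-similar structure of the Thue--Morse sequence encoded in the recurrences $e_{2k}=e_k$ and $e_{2k+1}=-e_k$. The central idea is that these recurrences induce relations among Hankel determinants of different orders, so that one can hope to express $H_{2n}(\mathbf e)$ and $H_{2n+1}(\mathbf e)$ in terms of $H_n(\mathbf e)$ (and neighboring determinants), thereby setting up an induction on $n$ in which the factor $2^{n-1}$ and the odd residue propagate cleanly.

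First I would try to pass from the determinant to a combinatorial model that is stable under the doubling map. Following the combinatorial method alluded to in the abstract (Bugeaud--Han), one writes $H_n(\mathbf e)=\sum_{\sigma\in\Sym_n}\sgn(\sigma)\prod_{i}e_{i+\sigma(i)-2}$ and groups permutations according to the parity pattern of the indices $i+\sigma(i)$. The sign rule $e_{2k+1}=-e_k$ means each odd-indexed entry contributes a sign flip, and the value rule $e_{2k}=e_k$ means even-indexed entries fold back onto a smaller Thue--Morse entry. The goal of this bookkeeping is to show that, modulo $2$, the bulk of the $n!$ terms cancel in pairs (via an involution on $\Sym_n$ that swaps two entries and changes exactly one parity), leaving a residual sum that reconstructs a Hankel determinant of half the size. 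This is the mechanism by which the $2$-adic valuation $v_2(H_n(\mathbf e))=n-1$ should emerge inductively.

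Next I would set up the induction itself. The base cases $H_1=1$ and $H_2=\det\begin{pmatrix}1&-1\\-1&-1\end{pmatrix}=-2$ give residues $1$ and $1$ modulo $2$ after dividing by $2^{0}$ and $2^{1}$. For the inductive step I expect to need a small closed system: relations expressing $H_{2n}$ and $H_{2n+1}$ simultaneously in terms of $H_n$, $H_{n+1}$, and perhaps an auxiliary determinant formed from a shifted sequence. One derives these by block row/column operations on the Hankel matrix that separate even and odd indices, factoring out the requisite powers of $2$ from the sign flips; then one reduces the resulting blocks using $e_{2k}=e_k$. Carrying the induction hypothesis $H_n(\mathbf e)/2^{n-1}\equiv 1\pmod 2$ through these relations should yield $H_{2n}/2^{2n-1}\equiv 1$ and $H_{2n+1}/2^{2n}\equiv 1$.

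The hard part will be making the reduction relations exact rather than merely congruential, and in particular controlling the auxiliary quantities that appear when one halves the index. A naive folding of the matrix does not close on the single family $H_n(\mathbf e)$; one is forced to introduce companion determinants (for instance Hankel determinants of the shifted sequence $(e_{k+1})_{k\ge0}$ or a signed variant), and the real labor is proving that these companions also satisfy matching $2$-adic congruences so that the induction is self-sustaining. Establishing the precise form of this closed recurrence system, and verifying that every term has exactly the predicted $2$-adic valuation so no unexpected factor of $2$ is gained or lost, is where I would expect the proof to demand the most care — and indeed is presumably the point at which the later sections turn to computer assistance to handle the combinatorial case analysis.
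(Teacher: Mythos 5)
Your plan is aimed at the right family of ideas: the paper does not reprove Theorem \ref{thm:APWW} (it cites \cite{APWW1998}), but its general machinery in Sections \ref{sec:proof}--\ref{sec:CountType}, specialized to $d=2$, is precisely the combinatorial strategy you sketch --- expand the determinant over $\Sym_n$, cancel permutations in pairs by an involution, and close a recurrence system under the doubling map. Still, as written the proposal has two concrete gaps. First, you misplace the difficulty: the factor $2^{n-1}$ does not come from the sign flips $e_{2k+1}=-e_k$ and does not need to be ``tracked'' through the induction. For \emph{any} $\pm1$ sequence, subtracting consecutive columns turns the first $n-1$ columns into even vectors, so $2^{n-1}\mid H_n$ automatically; the entire content of the theorem is that the quotient is odd. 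Your stated worry about ``verifying that every term has exactly the predicted $2$-adic valuation so no factor of $2$ is gained or lost'' therefore targets the trivial half of the statement, while the real problem --- showing a certain $0$--$1$ determinant, equivalently a permutation count, is odd --- is left as the vague hope that ``the bulk of the $n!$ terms cancel in pairs.''

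Second, the ingredient that actually makes the induction close is never identified. After factoring out $2^{n-1}$, one needs the explicit set $J=\{t : e_t\neq e_{t+1}\}$ (for Thue--Morse, $t\in J$ iff $v_2(t+1)$ is even, cf.\ Definition \ref{def:JK} and Lemma \ref{lemma:f=set} with $d=2$), and the key structural fact that the halving map $t\mapsto\lfloor t/2\rfloor$ carries the $J$-condition on residue class $1\bmod 2$ into the complementary $K$-condition (Lemma \ref{lemma:set}(iii)). Without this, your ``companion determinants'' cannot be controlled: the system does not close on Hankel determinants $H_n$, $H_{n+1}$ at all, but on permutation counts $j_{m,\ell}$, $k_{m,\ell}$ with exactly one relaxed constraint (the $X,Y,Z,U,V,W$ of Definition \ref{def:jk}), and the recurrences are obtained by the involution/type/atom analysis, not by block row operations on the matrix. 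So the proposal is a reasonable outline of the known proof but is missing the lemma that powers it, and the step you flag as hard is not where the work lies.
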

Theorem \ref{thm:APWW} has an important application to Number Theory.
As a consequence of Theorem \ref{thm:APWW}, all the Hankel determinants of the
Thue--Morse sequence are nonzero. This property 
allowed Bugeaud \cite{Bu2011} to prove that the irrationality exponents of the Thue--Morse--Mahler numbers are exactly 2.
\medskip

The goal of the paper is to find more Apwenian sequences.
Let $d$ be a positive integer and ${\bf v}=(v_0, v_1,v_2,\ldots,v_{d-1})$ a finite $\pm 1$-sequence of length $d$ such that $v_0=1$.
The generating polynomial of $\bf v$ is denoted by 
$\tilde{\bf v}(x)= \sum_{i=0}^{d-1} v_i x^i $.
It is clear that the following power series
\begin{equation}\label{def:Phi}
	\Phi(\tilde{\bf v}(x)) = \prod_{k=0}^\infty \tilde{\bf v} (x^{d^k})
\end{equation}
defines a $\pm1$-sequence.
Thus, the power series displayed in \eqref{def:ThueMorse} 
is equal to $\Phi(1-x)$. 
Our main result is stated next.
%For example,
%$$
%\Phi(x;1-z-z^2-z^3+z^4)=\prod_{k\ge 0}(1-x^{5^{k}}-x^{2\cdot5^{k}}-x^{3\cdot 5^k}+x^{4\cdot 5^k}).
%$$
%\vfill\eject

\begin{thm}\label{thm:main}
The following power series are all Apwenian:
\begin{align*}
F_2(x)	&=\Phi(1-x),\\
F_3(x)	&=\Phi(1-x-x^2),\\
F_5(x)	&=\Phi(1-x-x^2-x^3+x^4),\\
F_{11}(x)	&=\Phi(1-x-x^2+x^3-x^4+x^5+x^6+x^7+x^8-x^9\\
					&\qquad\qquad-x^{10}),\nonumber\\
F_{13}(x)		&=\Phi(1-x-x^2+x^3-x^4-x^5-x^6-x^7-x^8+x^9\\
				 &\qquad\qquad-x^{10}-x^{11}+x^{12}),\nonumber\\
F_{17a}(x)		&=\Phi(1-x-x^2+x^3-x^4+x^5+x^6+x^7+x^8+x^9\\
		&\qquad\qquad+x^{10} +x^{11}-x^{12}+x^{13}-x^{14}-x^{15}+x^{16}),\nonumber\\
F_{17b}(x)		&=\Phi(1-x-x^2-x^3+x^4+x^5-x^6+x^7+x^8+x^9\\
		&\qquad\qquad-x^{10} +x^{11}+x^{12}-x^{13}-x^{14}-x^{15}+x^{16}).\nonumber
\end{align*}
\end{thm}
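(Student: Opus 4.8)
The plan is to convert the infinite family of congruences $H_n(F_d)/2^{\,n-1}\equiv 1\pmod 2$ into a finite computation, exploiting the self-similarity of $\Phi$. The essential algebraic input is the functional equation $\Phi(\tilde{\bf v}(x))=\tilde{\bf v}(x)\,\Phi(\tilde{\bf v}(x^{d}))$, which is immediate from \eqref{def:Phi}. Because the inner substitution $x\mapsto x^{d}$ spreads the coefficients of $F_d$ out by a factor of $d$, a Hankel matrix of order roughly $dn$ built from $F_d$ decomposes, after suitable row and column operations dictated by the polynomial factor $\tilde{\bf v}(x)$, into blocks governed by a Hankel matrix of order roughly $n$. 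This is what makes an induction on the order $n$ possible.

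To control the $2$-adic behaviour I would use the combinatorial model of Bugeaud and Han. Since every entry $c_k$ is $\pm1$, reducing the determinant modulo $2$ collapses all entries to $1$ and yields only $H_n\equiv 0$ for $n\ge 2$; the arithmetic content of the Apwenian property lives one level deeper, in $H_n$ modulo $2^{n}$, equivalently in the parity of the quotient $H_n/2^{\,n-1}$. Expanding the determinant as $\sum_{\sigma\in\Sym_n}\sgn(\sigma)\prod_i c_{\,i+\sigma(i)-2}$ and tracking, for each permutation, the exact power of $2$ it contributes, the improved method expresses $H_n/2^{\,n-1}\bmod 2$ as a weighted count over a restricted set of permutations (or, equivalently, families of non-intersecting paths) whose relevant data depend on the sequence only through a bounded window.

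I would then package the induction by introducing a finite family of auxiliary determinants: the Hankel determinants of $F_d$ together with those of finitely many related sequences (shifts of $F_d$ and determinants with a few rows or columns modified according to the coefficients of $\tilde{\bf v}$). The goal is to show that this family satisfies a \emph{closed} system of recurrences modulo a fixed power of $2$, each recurrence reducing order $n$ to order about $n/d$. The computer then performs two tasks: it verifies the finitely many initial determinants, and it checks that the chosen family is closed under the recurrences, i.e.\ that the finitely many reduction identities hold. Carrying this out for each of the seven polynomials $\tilde{\bf v}$ in turn proves the theorem.

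The main obstacle is this middle step: discovering the correct finite set of auxiliary determinants and, above all, \emph{proving} the reduction identities for every $n$ rather than merely observing them on small cases. This is exactly where the combinatorial reformulation pays off, for it reduces each identity to an equality of parities of counts of permutations or path families, and such an equality can be decided on a bounded configuration window — which is what renders the computer assistance a genuine proof rather than numerical evidence. Once the recurrence system is established and the base cases are verified, a single induction on $n$ yields the Apwenian property for all seven series.
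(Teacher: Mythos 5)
Your overall strategy --- reformulate the parity of $H_n/2^{n-1}$ combinatorially \`a la Bugeaud--Han, set up a closed finite system of auxiliary quantities satisfying recurrences that contract the order from about $dn$ to about $n$, have the computer verify the finitely many identities and base cases, then induct --- is indeed the route the paper takes. But as it stands the proposal has a genuine gap precisely at the point you yourself flag as ``the main obstacle,'' and that obstacle is the entire technical content of the proof. First, the combinatorial reformulation is never made precise. The paper's version (Lemma \ref{lemma:f=set} and Theorem \ref{thm:J}) is not a ``weighted count tracking the power of $2$ contributed by each permutation'': one first performs column subtractions to extract the global factor $2^{m-1}$, reduces the resulting $0/1$ matrix modulo $2$, and only then expands by Leibniz, obtaining the \emph{unweighted} parity of the number of permutations $\sigma$ with $i+\sigma(i)\in J$ for $i\le m-2$, where $J$ is an explicit $d$-automatic set determined by the signs $v_{i-1}v_i$. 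Without identifying $J$ and its self-similarity under $k\mapsto\lfloor k/d\rfloor$ (Lemma \ref{lemma:set}(iii), which when $v_{d-1}=-1$ swaps $J$ with its complement and forces \emph{two} coupled systems of recurrences), there is no mechanism producing the contraction $n\mapsto n/d$. Your alternative suggestion of a block decomposition of the Hankel matrix itself ``dictated by $\tilde{\bf v}(x)$'' is not what happens here and is not known to work: the paper explicitly notes that the continued-fraction shortcut fails for these series because the associated fractions are not ultimately periodic.

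Second, the assertion that each reduction identity ``can be decided on a bounded configuration window'' is exactly what must be proved, and it is not automatic: a priori the number of constrained permutations of $\{0,\dots,dn+h-1\}$ has no reason to be expressible as a polynomial in the corresponding counts at orders $n$ and $n+1$. In the paper this occupies Sections \ref{sec:CountType} and \ref{sec:Algorithm}: one classifies the surviving permutations by \emph{type} (a word of length $d$ or $d+1$ over residues mod $d$), kills most types by explicit parity involutions (Lemmas \ref{lemma:eq0} and \ref{lemma:eqsum0}), decomposes each surviving class into a Cartesian product of $d$ \emph{atoms}, and evaluates each atom by the bijection $\beta$ onto a count of order $n$ or $n+1$ (Theorem \ref{thm:atom:Psi}). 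Only after all of this does the computation become finite, and the number of types (e.g.\ $2274558$ for $F_{11}$) shows it is finite but far from small. So while your plan points in the right direction, it is a research programme rather than a proof: the key lemma making the problem finitely checkable is missing, and the step you defer to the computer is not yet in a form a computer could verify.
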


{\bf Remarks}. Let us make some useful comments about the above Theorem.
\begin{enumerate}
\item
The fact that the generating function $F_2(x)$ for the Thue--Morse 
sequence is Apwenian has already been proved in \cite{APWW1998}.
\item 
	By using the Jacobi continued fraction expansion of a power series $F(x)$, 
	we know that
$H_n(F(x))=H_n(F(-x))$. 
See, for example, \cite{Kr98,Fl80, Han2014, Han2015hankel}. 
Hence, Theorem \ref{thm:main} implies that
$F_3(-x)=\Phi(1+x-x^2)$, $F_5(-x)=\Phi(1+x-x^2+x^3+x^4)$, etc. are all Apwenian.
\item 
	There is no $F_7$ in Theorem \ref{thm:main}, but two $F_{17}$ (we mean
	$F_{17a}$ and $F_{17b}$).
\item
	$\Phi(1-x-x^2+x^3)$ is Apwenian since it is equal to $\Phi(1-x)$.
\end{enumerate}

Actually, Theorem \ref{thm:APWW} has three proofs. The original proof of Theorem \ref{thm:APWW} is based on determinant manipulation by using the so-called {\it sudoku method} \cite{APWW1998, HanWu2015}.
The second one is a combinatorial proof derived by Bugeaud and Han \cite{BH2014}.
The third proof is very short  and makes use of Jacobi continued fraction algebra \cite{Han2015hankel}. Unfortunately, the method developed in the 
short proof cannot be used for proving our main theorem, 
because the underlying Jacobi continued fractions are not ultimately periodic \cite{Han2015hankel, Han2014}.
However, another analogous result for the sequence $F_3(x)$ when dealing with modulo 3 (instead of modulo 2) is established using the short method, as stated in the next theorem~\cite{Han2014}.

\begin{thm}\label{thm:mod3}
	For every positive integer $n$ the Hankel determinant $H_{n}(F_3(x))$ of the sequence $F_3(x)$ verifies the following relation
\begin{equation}\label{equ:mod3}
	H_{n}(F_3(x))\equiv 
\begin{cases}
	1 \pmod 3 &\text{if $n\equiv 1,2\pmod 4$;} \\
	2 \pmod 3 &\text{if $n\equiv 3,0\pmod 4$.} \\
\end{cases}
\end{equation}
\end{thm}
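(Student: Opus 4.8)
\emph{Proof proposal.} The plan is to use the Jacobi continued fraction (J-fraction) expansion of $F_3(x)$, following the ``short method'' of \cite{Han2015hankel, Han2014}. Recall that if a power series $f(x)=\sum_{k\ge 0}c_k x^k$ with $c_0=1$ admits a J-fraction with coefficients $b_0,b_1,\dots$ and $\lambda_1,\lambda_2,\dots$, then its Hankel determinants factor as
\begin{equation*}
	H_n(f)=\prod_{k=1}^{n-1}\lambda_k^{\,n-k},
\end{equation*}
so $H_n$ is a unit modulo $3$ as soon as every $\lambda_k$ is. Since the nonzero residues modulo $3$ are exactly $\pm1$, I would write $\lambda_k\equiv(-1)^{a_k}\pmod3$ with $a_k\in\{0,1\}$ and reduce the claim to the congruence
\begin{equation*}
	H_n(F_3)\equiv(-1)^{E_n}\pmod3,\qquad E_n=\sum_{k=1}^{n-1}(n-k)\,a_k,
\end{equation*}
the target statement being that $E_n$ is even precisely when $n\equiv1,2\pmod4$.

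The heart of the argument is to pin down the sequence $(\lambda_k)$ modulo $3$. The key input is the functional equation that follows from \eqref{def:Phi},
\begin{equation*}
	F_3(x)=(1-x-x^2)\,F_3(x^3),
\end{equation*}
which encodes the base-$3$ self-similarity of the sequence. Applying the J-fraction algebra of \cite{Han2015hankel} to this identity, I would derive recurrences that relate the coefficients $(b_k,\lambda_k)$ whose indices lie in a fixed residue class modulo $3$ to the coefficients of smaller index, together with the values on the remaining residue classes. Because the underlying J-fraction is not ultimately periodic, these recurrences are genuinely self-referential (a $3$-automatic pattern) rather than periodic; solving them modulo $3$ should yield simultaneously the nonvanishing $\lambda_k\not\equiv0\pmod3$ for every $k$ and the explicit sign sequence $(a_k)$.

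Finally I would assemble the pieces. Substituting the pattern $(a_k)$ into $E_n=\sum_{k<n}(n-k)a_k$, one reduces the exponent modulo $2$ and checks, by the same base-$3$ self-similar induction (anchored by the directly computed values $H_1\equiv1$, $H_2\equiv1$ and $H_3\equiv2\pmod3$), that $E_n\bmod2$ depends only on $n\bmod4$ in the asserted way. I expect the main obstacle to be the middle step: establishing and proving the self-referential recurrences for the J-fraction coefficients, and then controlling the parity of the exponent sum $E_n$ through the base-$3$ recursion, rather than the (routine) final bookkeeping.
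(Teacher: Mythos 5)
The paper does not prove Theorem \ref{thm:mod3} at all: it is quoted from \cite{Han2014}, where it is ``established using the short method,'' i.e.\ exactly the Jacobi-continued-fraction route you propose. So your choice of method is the intended one, and the reduction $H_n=\prod_{k=1}^{n-1}\lambda_k^{\,n-k}$, hence $H_n\equiv(-1)^{E_n}\pmod 3$ with $E_n=\sum_{k<n}(n-k)a_k$, is the right frame. But as written the proposal has a genuine gap: the entire content of the theorem is concentrated in the middle step (``I would derive recurrences \dots solving them modulo $3$ should yield \dots''), which is deferred rather than carried out. Nothing in the outline actually produces the sequence $(\lambda_k \bmod 3)$, and without it neither the nonvanishing $\lambda_k\not\equiv 0$ nor the parity pattern of $E_n$ is established.

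Moreover, your framing of that missing step points in the wrong direction. The paper's remark that ``the underlying Jacobi continued fractions are not ultimately periodic'' concerns the mod-$2$ (Apwenian) statements, which is precisely why the short method fails there; for the mod-$3$ statement the situation is the opposite. Reducing the functional equation modulo $3$ gives $F_3(x)\equiv(1-x-x^2)\,F_3(x)^3\pmod 3$, so $F_3$ is algebraic over $\mathbb{F}_3$ and its continued fraction modulo $3$ is ultimately periodic --- that periodicity, not a ``genuinely self-referential $3$-automatic pattern,'' is what makes the $\lambda_k\bmod 3$ explicitly computable and turns $E_n\bmod 2$ into a function of $n\bmod 4$. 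A second point to address: the classical J-fraction over $\mathbb{F}_3$ exists only if all $H_n\not\equiv 0\pmod 3$, which is part of what you are trying to prove; you must either run the nonvanishing through the same induction or, as in \cite{Han2014}, use the Hankel continued fraction, which exists unconditionally and from which the $H_n$ can be read off without a circularity.
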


Combining Theorem \ref{thm:mod3} and Theorem \ref{thm:main} yields 
the following result.
\begin{coroll}\label{thm:mod6}
	For every positive integer $n$ the Hankel determinant $H_{n}(F_3(x))$ verifies the following relation
\begin{equation}\label{equ:mod6}
	\frac{H_{n}(F_3(x))}{2^{n-1}}\equiv 
\begin{cases}
	1 \pmod 6 &\text{if $n\equiv 0,1\pmod 4$;} \\
	5 \pmod 6 &\text{if $n\equiv 2,3\pmod 4$.} \\
\end{cases}
\end{equation}
\end{coroll}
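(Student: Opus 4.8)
The plan is to treat Corollary \ref{thm:mod6} as a purely arithmetic consequence of the two facts already in hand: Theorem \ref{thm:main}, which asserts that $F_3(x)$ is Apwenian, and Theorem \ref{thm:mod3}, which records $H_n(F_3(x))$ modulo $3$. I would set $Q_n := H_n(F_3(x))/2^{n-1}$. By Theorem \ref{thm:main} this quantity is an odd integer, so $Q_n \equiv 1 \pmod 2$ for every positive integer $n$; this supplies one of the two residues needed to determine $Q_n$ modulo $6$.

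First I would extract $Q_n$ modulo $3$. From $H_n(F_3(x)) = 2^{n-1} Q_n$ together with $2 \equiv -1 \pmod 3$ one obtains $H_n(F_3(x)) \equiv (-1)^{n-1} Q_n \pmod 3$. Since $(-1)^{n-1}$ is its own inverse modulo $3$, this inverts to $Q_n \equiv (-1)^{n-1} H_n(F_3(x)) \pmod 3$. Substituting the residues of $H_n(F_3(x))$ given by Theorem \ref{thm:mod3} and splitting according to the parity of $n$ (which is fixed by $n \bmod 4$), a short check over the four classes $n \equiv 0,1,2,3 \pmod 4$ yields $Q_n \equiv 1 \pmod 3$ when $n \equiv 0,1 \pmod 4$ and $Q_n \equiv 2 \pmod 3$ when $n \equiv 2,3 \pmod 4$.

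Finally I would assemble the two congruences by the Chinese Remainder Theorem. Combining $Q_n \equiv 1 \pmod 2$ with $Q_n \equiv 1 \pmod 3$ gives $Q_n \equiv 1 \pmod 6$, whereas combining $Q_n \equiv 1 \pmod 2$ with $Q_n \equiv 2 \pmod 3$ gives $Q_n \equiv 5 \pmod 6$. Matching these against the cases found above reproduces precisely the two cases displayed in \eqref{equ:mod6}.

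There is no genuine obstacle here: the corollary is a formal combination of Theorems \ref{thm:main} and \ref{thm:mod3}, and the only point demanding care is the sign factor $(-1)^{n-1}$, which forces one to refine the mod-$3$ information of Theorem \ref{thm:mod3} by the parity of $n$ before recombining. All the substantive content is carried by the two cited theorems; the present argument is bookkeeping with the Chinese Remainder Theorem.
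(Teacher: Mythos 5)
Your argument is correct and is exactly what the paper intends: the paper offers no separate proof of Corollary \ref{thm:mod6}, stating only that it follows by ``combining Theorem \ref{thm:mod3} and Theorem \ref{thm:main}'', which is precisely your CRT bookkeeping with the sign $2\equiv -1\pmod 3$ handled via the parity of $n$. Your case check also agrees with the paper's table of values ($Q_2=-1\equiv 5$, $Q_9=19\equiv 1 \pmod 6$, etc.), so nothing further is needed.
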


In the following table we reproduce the first few values of the Hankel determinants of the sequence $F_3(x)$ for illustrating Theorems \ref{thm:main}, \ref{thm:mod3} and Corollary \ref{thm:mod6}.

\bigskip
\centerline{
\begin{tabular}{|l|r r r r r r r r r r|}
\hline
$n$												& $1$ & $2$		& $3$		& $4$ & $5$ & $6$ 	&	$7$ & $8$ & $9$ & $10$\\
\hline 
$H_n({\bf f})$ 									& $1$ & $-2$	& $-4$	& $8$ & $16$ & $-32$ &	$-64$	& $128$ & $4864$ & $-9728$\\ [2pt]
%\hline
$H_n({\bf f}) \pmod3$ 					& $1$ & $1$		& $2$		& $2$ & $1$ & $1$		& $2$ & $2$ & $1$ & $1$\\ [2pt]
$H_n({\bf f})/2^{n-1}$ 									& $1$ & $-1$	& $-1$	& $1$ & $1$ & $-1$ &	$-1$	& $1$ & $19$ & $-19$\\ [2pt]
%\hline
$\frac{H_n({\bf f})}{2^{n-1}} \pmod 2$	& $1$	& $1$	& $1$		& $1$		& $1$ & $1$ & $1$ & $1$ & $1$ & $1$	\\	[4pt]
$\frac{H_n({\bf f})}{2^{n-1}} \pmod6$ 					& $1$ & $5$		& $5$		& $1$ & $1$ & $5$		& $5$ & $1$ & $1$ & $5$\\ [2pt]
\hline
\end{tabular}
}

\bigskip

Recently, Bugeaud and Han re-proved Theorem \ref{thm:APWW} by means of an appropriate combinatorial method \cite{BH2014}.  
The latter method 
has been significantly upgraded 
to prove that $F_3(x)$ is Apwenian.
As can be seen, in Section \ref{sec:CountType} Step 2,
a family of cases (called {\it types}) is considered for proving 
the various recurrence relations.
Roughly speaking, the types are indexed by words $s_0s_1s_2\cdots s_d$ of length $d+1$ over a $d$-letter alphabet. Comparing to the original combinatorial method, 
the upgrading does not provide a shorter proof; 
however,
it involves of a systematic proof {\it by exhaustion} that 
only consists of checking all the types.
The proof of Theorem \ref{thm:main} is then
achieved with {\it Computer Assistance}.

In practice,  the number of types is very large.
For example, as described in \S2.3 for the study of $F_{11}(x)$, there are
2274558 types!
Fortunately, the set of permutations of each type can be decomposed into the Cartesian product of so-called {\it atoms} (see Substep $3(d)$ in the sequel), and
moreover, the cardinality of each atom can be rapidly evaluated by a sequence of tests (see Definition \ref{def:mu} and Table \ref{tab:Psi}).

\begin{problem}
Is the following power series Apwenian: 
\begin{align*}
	F_{19}(x)&=\Phi(1-x-{x}^{2}-{x}^{3}+{x}^{4}-{x}^{5}+{x}^{6}-{x}^{7}-{x}^{8}+{x}^{9}\\
	&\qquad +{x}^{10}-{x}^{11}-{x}^{12}-{x}^{13}-{x}^{14}-{x}^{15}+{x}^{16}-{x}^{17}- {x}^{18})\,?
\end{align*}
Find a fast computer assisted proof for Theorem \ref{thm:main} 
to answer the above question.
\end{problem}

For proving that $F_{17a}(x)$ is Apwenian, our $C$ program has taken 
about one week by using  24 CPU cores. No hope for $F_{19}(x)$.

\begin{problem}
Find a human proof of Theorem \ref{thm:main} without computer assistance.
\end{problem}

\begin{problem}
	Characterize all the finite $\pm1$-sequences $\bf v$ such that $\Phi(\tilde {\bf v}(x))$ is Apwenian.
\end{problem}

As an application of Theorem \ref{thm:main} in Number Theory, the irrationality exponents of
$F_5(1/b), F_{11}(1/b), F_{17a}(1/b), F_{17b}(1/b)$ are proved to be equal to 2 (see \cite{BHWY2015}).

%>>>

\section{Proof of Theorem \ref{thm:main}}\label{sec:proof}%<<<
Let $d$ be a positive integer and ${\bf v}=(v_0, v_1,v_2,\ldots,v_{d-1})$ be a finite $\pm 1$-sequence of length $d$ with $v_0=1$.
Let ${\bf f}=(f_k)_{k\geq 0}$ be the $\pm 1$-sequence defined by
the following generating function
\begin{equation}\label{def:seq:f}
	\tilde{\bf f}(x) = \Phi(\tilde{\bf v}(x)) = \prod_{k=0}^\infty \tilde{\bf v} (x^{d^k}),
\end{equation}
where $\tilde{\bf v}(x)=\sum_{i=0}^{d-1}v_ix^{i}$. 
The above power series satisfies the following 
functional equation 
\begin{equation}\label{equ:f:func}
\tilde{\bf f}(x)= 
	\tilde{\bf v}(x)\prod_{k=1}^{\infty}\tilde{\bf v}\left(x^{d^{k}}\right)
	=\tilde{\bf v}(x)\tilde{\bf f}(x^{d}).
\end{equation}
The sequence ${\bf f}$  can also be defined by the 
recurrence relations
\begin{equation}\label{equ:f:recurrence}
	f_0=1,\quad f_{dn+i}=v_if_{n} \text{\ for  $n\geq 0$ and $0\leq i \leq d-1$.}
\end{equation}

We divide the set $\{1,2,\ldots,d-1\}$ into two disjoint subsets 
\begin{align*}
	P&=\{1\leq i\leq d-1 \mid v_{i-1} \not= v_i\},\\
	Q&=\{1\leq i\leq d-1 \mid v_{i-1} = v_i\}.
\end{align*}
Two disjoint infinite sets of integers $J$ and $K$ play an important 
in the proof of Theorem \ref{thm:main}.
\begin{defi}\label{def:JK}
If $v_{d-1}=-1$, define
\begin{align*}
	J&=\{(dn+p)d^{2k}-1\ |\ n,k\in N, p\in P\}\\
	 &\qquad\bigcup\{(dn+q)d^{2k+1}-1\ |\ n,k\in N, q\in Q\},\\
	K&=\{(dn+q)d^{2k}-1\ |\ n,k\in N, q\in Q\}\\
	 &\qquad\bigcup\{(dn+p)d^{2k+1}-1\ |\ n,k\in N, p\in P\}.\\
\noalign{If $v_{d-1}=1$, define}
J&=\{(dn+p)d^{k}-1\ |\ n,k\in N, p\in P\},\\
K&=\{(dn+q)d^{k}-1\ |\ n,k\in N, q\in Q\}.
\end{align*}
\end{defi}
From the above definition it is easy to see that $N= J\cup K$.
\begin{lemma}\label{lemma:f=set}
For each $t\ge 0$ the integer $\delta_{t}:=|(f_{t}-f_{t+1})/2|$ is equal to $1$ if and only if $t$ is in $J$.
\end{lemma}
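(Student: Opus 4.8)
The plan is to reduce the lemma to a self-similar recurrence for the \emph{difference set} $S:=\{t\ge 0:\delta_t=1\}$ and then to verify by induction that $S$ coincides with $J$. First I would observe that since $f_t,f_{t+1}\in\{+1,-1\}$, the quantity $\delta_t=|(f_t-f_{t+1})/2|$ takes only the values $0$ and $1$, and $\delta_t=1$ precisely when $f_t\ne f_{t+1}$. Thus the lemma is equivalent to the set identity $S=J$.

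Next I would analyze the condition $f_t\ne f_{t+1}$ by setting $u=t+1$ and distinguishing whether $d$ divides $u$. If $d\nmid u$, write $u=dn+r$ with $1\le r\le d-1$; then $t=dn+(r-1)$ and $t+1=dn+r$ lie in the same block, so by \eqref{equ:f:recurrence} we have $f_t=v_{r-1}f_n$ and $f_{t+1}=v_rf_n$, whence $f_t\ne f_{t+1}$ iff $v_{r-1}\ne v_r$, i.e.\ iff $r\in P$. If instead $d\mid u$, write $u=dm$ with $m\ge 1$; then $f_t=f_{dm-1}=v_{d-1}f_{m-1}$ while $f_{t+1}=f_{dm}=v_0f_m=f_m$, so $f_t\ne f_{t+1}$ iff $v_{d-1}f_{m-1}\ne f_m$. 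When $v_{d-1}=1$ this reads $\delta_{dm-1}=\delta_{m-1}$, and when $v_{d-1}=-1$ it reads $\delta_{dm-1}=1-\delta_{m-1}$ (a sign flip). These three identities give a complete recursive description of $S$: membership of $t$ with $d\nmid t+1$ is decided solely by the residue $r$, while membership of $t=dm-1$ is reduced to that of the strictly smaller index $m-1$.

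With this in hand I would prove $S=J$ by strong induction on $t$. The indices $t$ with $d\nmid t+1$ are handled directly: such $t$ can only arise from the $k=0$ part of Definition \ref{def:JK}, and the residue test $r\in P$ matches the first family exactly (all the $k\ge 1$, resp.\ $d^{2k+1}$, families force $d\mid t+1$). For $t=dm-1$ I would invoke the induction hypothesis at $m-1$ and then check that $J$ (together with $K$) obeys the same recursion. Concretely, when $v_{d-1}=1$ one verifies $dm-1\in J\iff m-1\in J$ by factoring out a single power of $d$, which shifts the exponent $k$ by one; when $v_{d-1}=-1$ one verifies $dm-1\in J\iff m-1\in K$, using the alternating exponents $d^{2k}$ and $d^{2k+1}$ to interchange the roles of $P$ and $Q$, and hence of $J$ and $K$, at each division by $d$. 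This last equivalence relies on the disjoint decomposition $N=J\cup K$ (with $J\cap K=\emptyset$) recorded after Definition \ref{def:JK}, so that $m-1\notin S$ can be replaced by $m-1\in K$.

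I expect the main obstacle to be the purely combinatorial bookkeeping in the case $v_{d-1}=-1$: one must track how a single factor of $d$ interacts with the parity of the exponent in $d^{2k}$ versus $d^{2k+1}$, and confirm that dividing $u=dm$ by $d$ interchanges the two families defining $J$ with the two families defining $K$ exactly as the sign flip $\delta_{dm-1}=1-\delta_{m-1}$ demands. Once this interchange is checked in both directions, the induction closes and yields $S=J$, which is the assertion of the lemma.
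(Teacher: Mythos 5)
Your proof is correct and follows essentially the same route as the paper: both arguments come down to applying the recurrence $f_{dn+i}=v_if_n$ to $t+1=(dn+\ell)d^k$, the paper by iterating it $k$ times at once to get the closed form $f_{(dn+\ell)d^k-1}=v_{d-1}^k v_{\ell-1}f_n$ and comparing with $f_{(dn+\ell)d^k}=v_\ell f_n$, while you peel off one factor of $d$ at a time and close the argument by induction, which is just the unrolled version of the same computation. The one point to note is that your $v_{d-1}=-1$ step uses the disjointness $J\cap K=\emptyset$ together with $N=J\cup K$; the paper asserts this without proof, and it does hold (by uniqueness of the factorization $t+1=Md^k$ with $d\nmid M$ and the disjointness of $P$ and $Q$), so no real gap results.
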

\begin{proof}
Let $t=	(dn+\ell) d^{k}-1$. By \eqref{equ:f:recurrence} we have
\begin{equation*}
f_{(dn+\ell) d^{k}-1} =  f_{d[(dn+\ell)d^{k-1}-1]+(d-1)}
= v_{d-1} f_{(dn+\ell)d^{k-1}-1},
\end{equation*}
and
\begin{equation*}
f_{(dn+\ell) d^{k}-1} = v_{d-1}f_{(dn+\ell)d^{k-1}-1} = \cdots
= v_{d-1}^{k} f_{dn+\ell-1}
= v_{d-1}^{k} v_{\ell-1}f_{n}.
\end{equation*}
In the same manner, 
\begin{equation*}
f_{(dn+\ell) d^{k}} = f_{(dn+\ell)d^{k-1}} = \cdots
= f_{dn+\ell}
= v_{\ell}f_{n}.
\end{equation*}
Hence,
\begin{equation*}
\delta_{t}
=\left|\frac 12 (f_{(dn+\ell) d^{k}-1}-f_{(dn+\ell) d^{k}})\right|
%=\left|\frac 12 (v_{d-1}^{k} v_{\ell}f_{n}- v_{d-1}^{k} v_{\ell-1}f_{n})\right|
=\left|\frac 12 ( v_{\ell}-  v_{d-1}^kv_{\ell-1})\right|
\end{equation*}
is odd if and only if 
\begin{equation}\label{cond:J}
v_{d-1}^{k}\cdot v_{\ell-1}v_{\ell}=-1.
\end{equation}
There are two cases are to be considered:
(i) If $v_{d-1}=1$, condition \eqref{cond:J} is equivalent to $v_{\ell-1}v_{\ell}=-1$, or $\ell\in P$, or $t\in J$ by Definition \ref{def:JK}.
(ii) If $v_{d-1}=-1$, condition \eqref{cond:J} becomes 
$v_{\ell-1}v_{\ell}=(-1)^{k+1}$, which is equivalent to $\ell \in P$ when $k$ is even and $\ell\in Q$ when $k$ is odd. In other words, $t\in J$. 
\end{proof}
% =====================================
Let $\Sym_{m}=\Sym_{\{0,1,\ldots,m-1\}}$ be the set of all permutations on
	$\{0,1,\ldots,m-1\}$. 
The following Theorem may be viewed as the combinatorial interpretation of Theorem \ref{thm:main}.
\begin{thm}\label{thm:J}
	Let $\bf v$ be a $\pm 1$-sequence of length $d$ with $v_0=1$. The sequence $\bf f$ and the set $J$ associated with $\bf v$ are defined by \eqref{def:seq:f} and Definition \ref{def:JK} respectively.  Then,
the sequence $\bf f$ is Apwenian if, and only if,
the number of permutations $\sigma \in \Sym_{m}$ such that $i+\sigma(i)\in J$ for $i=0,1,\ldots,m-2$ (no constraint on $m-1+\sigma(m-1)\in N$) is an odd integer for every integer $m\ge 1$. 
\end{thm}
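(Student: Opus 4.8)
The plan is to expand the Hankel determinant as a signed sum over $\Sym_n$ and to extract the factor $2^{n-1}$ by elementary row operations, so that the residue of $H_n(\mathbf f)/2^{n-1}$ modulo $2$ becomes literally a count of permutations. The bridge between the two sides is Lemma \ref{lemma:f=set}: the quantity $\delta_t=|(f_t-f_{t+1})/2|$ takes only the values $0$ and $1$, and equals $1$ exactly when $t\in J$.

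First I would write, directly from \eqref{def:Hankel}, $H_n(\mathbf f)=\det\big(f_{i+j}\big)_{0\le i,j\le n-1}$, and then apply the row operations $R_i\leftarrow R_i-R_{i+1}$ for $i=0,1,\dots,n-2$, performed in increasing order of $i$ so that each step subtracts the as-yet-unmodified row $R_{i+1}$. These are unimodular operations, hence leave the determinant unchanged, and they replace the $(i,j)$ entry by the first difference $f_{i+j}-f_{(i+j)+1}$ for $0\le i\le n-2$, while the last row retains its original entries $f_{(n-1)+j}$. By Lemma \ref{lemma:f=set} each such difference equals $\pm 2\,\delta_{i+j}$, i.e.\ it is $0$ if $i+j\notin J$ and $\pm 2$ if $i+j\in J$. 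Pulling one factor $2$ out of each of the first $n-1$ rows gives $H_n(\mathbf f)=2^{n-1}\det A$, where $A$ is an integer matrix whose rows $0,\dots,n-2$ have entries in $\{0,\pm1\}$ with $|A_{ij}|=1$ iff $i+j\in J$, and whose last row consists of the $\pm1$ values $f_{(n-1)+j}$. In particular $2^{n-1}\mid H_n(\mathbf f)$ and $H_n(\mathbf f)/2^{n-1}=\det A$.

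It then remains only to read off the parity of $\det A$. Since $\sgn(\sigma)\equiv 1\pmod 2$, the determinant and the permanent of $A$ coincide mod $2$, so
\[
\frac{H_n(\mathbf f)}{2^{n-1}}=\det A\equiv\sum_{\sigma\in\Sym_n}\ \prod_{i=0}^{n-1}A_{i,\sigma(i)}\pmod 2 .
\]
A given summand is odd precisely when all of its factors are odd: for $i\le n-2$ this forces $i+\sigma(i)\in J$, whereas the single index $i=n-1$, whose row of $A$ is entirely $\pm1$, imposes no constraint. Thus $H_n(\mathbf f)/2^{n-1}$ is congruent mod $2$ to the number of permutations described in the statement with $m=n$, and $\mathbf f$ is Apwenian (that residue being odd for every $n\ge1$) if and only if that count is odd for every $m\ge1$.

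I expect the only points requiring genuine care to be the simultaneous application of the row operations — the increasing order of the indices is what guarantees that each modified row is the intended first difference $R_i-R_{i+1}$ of original rows — and the observation that the final, entirely odd, row is exactly what yields the single unconstrained index $i=m-1$ in the theorem. The real difficulty of the paper, namely proving that the permutation count is actually odd for every $m$, lies beyond this statement: Theorem \ref{thm:J} is only the combinatorial reformulation that makes such a count meaningful.
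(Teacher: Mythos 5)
Your proposal is correct and follows essentially the same route as the paper: extract $2^{m-1}$ via first differences of consecutive rows/columns, invoke Lemma \ref{lemma:f=set} to identify the reduced entries with indicators of $J$, and read off the parity of the determinant as a permanent, i.e.\ a permutation count with the all-odd last line giving the one unconstrained index. The only cosmetic difference is that you operate on rows where the paper operates on columns (a transpose of the same symmetric matrix), which if anything aligns more directly with the statement's convention that $i=m-1$ is the free index.
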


\begin{proof}
Let $m$ be a positive integer. By means of elementary transformations the Hankel determinant $H_{m}({\bf f})$ is equal to 
\begin{align*}
H_{m}(\bf f)
	&=
	\left|
	\begin{matrix}
	f_0 		& f_1 	& \cdots & f_{m-1}	\\
	f_1 		& f_2 	& \cdots & f_{m}		\\
	\vdots 	&\vdots & \ddots & \vdots		\\
	f_{m-1}	& f_{m}	& \cdots & f_{2m-2}
	\end{matrix}\right|\\
&=
	\left|\begin{matrix}
	f_0-f_1 		& f_1-f_2 			& \cdots & f_{m-2}-f_{m-1}	& f_{m-1}		\\
	f_1-f_2 		& f_2-f_3 			& \cdots & f_{m-1}-f_{m} 		& f_{m}			\\
	\vdots 			& \vdots 				& \ddots & \vdots						& \vdots		\\
	f_{m-1}-f_m	& f_{m}-f_{m+1}	& \cdots & f_{2m-3}-f_{2m-2}& f_{2m-2}
	\end{matrix}\right| \\
&=  2^{m-1} \times
 	\left|\begin{matrix}
	\frac{f_0-f_1}{2} 		& \frac{f_1-f_2}{2} 			& \cdots & \frac{f_{m-2}-f_{m-1}}{2}	& f_{m-1}		\\
	\frac{f_1-f_2}{2} 		& \frac{f_2-f_3}{2} 			& \cdots & \frac{f_{m-1}-f_{m}}{2} 		& f_{m}			\\
	\vdots 								& \vdots 									& \ddots & \vdots											& \vdots		\\
	\frac{f_{m-1}-f_m}{2}	& \frac{f_{m}-f_{m+1}}{2}	& \cdots & \frac{f_{2m-3}-f_{2m-2}}{2}& f_{2m-2}
	\end{matrix}\right|. 
\end{align*}
By Lemma \ref{lemma:f=set}, we have
\begin{equation}\label{eqn:det}
\frac{H_{m}(\bf f)}{2^{m-1}}\equiv
\left|\begin{matrix}
\delta_0 		& \delta_1 	& \cdots & \delta_{m-2} 	& 1	  		\\
\delta_1 		& \delta_2 	& \cdots & \delta_{m-1} 	& 1				\\
\vdots 	&\vdots & \ddots & \vdots 	& \vdots	\\
\delta_{m-1}	& \delta_{m}	& \cdots & \delta_{2m-3} & 1
\end{matrix}\right|
\pmod2.
\end{equation}
By the very definition of a determinant or the Leibniz formula, the determinant
occurring on the right-hand side of the congruence \eqref{eqn:det}
is equal to
\begin{equation}\label{eqn:sumt}
	S_m:=\sum_{\sigma\in \Sym_m}(-1)^{\inv(\sigma)}\delta_{0+\sigma_0}\delta_{1+\sigma_1}\cdots \delta_{m-2+\sigma_{m-2}},
\end{equation}
where $\inv(\sigma)$ is the number of inversions of the permutation $\sigma$.
By Lemma \ref{lemma:f=set} the product $\delta_{0+\sigma_0}\delta_{1+\sigma_1}\cdots \delta_{m-2+\sigma_{m-2}}$ is equal to 1 if $i+\sigma_i\in J$ for $i=0,1,\ldots,m-2$, and to 0 otherwise. Hence, the summation $S_m$ 
is congruent modulo 2 to the number of permutations $\sigma\in\Sym_{m}$ such that $i+\sigma_{i}\in J$ for all $i=0,1,\ldots,m-2$. Hence, $\bf f$ is Apwenian if and only if the number of permutations $\sigma \in \Sym_{m}$ such that $i+\sigma(i)\in J$ for $i=0,1,\ldots,m-2$ (no constraint on $m-1+\sigma(m-1)\in N$) is an odd integer for every integer $m\ge 1$. 
\
\end{proof}

For proving that the sequence $\bf f$ is Apwenian by means of Theorem~\ref{thm:J}, it is convenient to introduce the following notations.
\begin{defi}\label{def:jk}
For $m\ge \ell \ge 0$ let
$\mathfrak{J}_{m,\ell}$ (resp. $\mathfrak{K}_{m,\ell}$) be the set of all permutations $\sigma =\sigma_0 \sigma_1 \cdots \sigma_{m-1} \in \Sym_{m}$ such that $i+\sigma_i\in J$ (resp. $i+\sigma_i\in K$) for $i\in \{0,1,\ldots,m-1\}\setminus \{\ell\}$.
%(no constraint on $\ell+\sigma_{\ell}\in N$). 
Let $n\geq 1$; 
for simplicity,  write:
\begin{align*}
		j_{m,\ell}&:=\#\mathfrak{J}_{m,\ell},
		 &k_{m,\ell}&:=\#\mathfrak{K}_{m,\ell},\\
		X_{n}	& := \sum_{i=0}^{n-1}j_{n,i}, &Y_{n}&:=j_{n,n},&&Z_{n}:=j_{n,n-1},\\
		U_{n}	& := \sum_{i=0}^{n-1}k_{n,i}, &V_{n}&:=k_{n,n},&&W_{n}:=k_{n,n-1},\\
	T_n	& :=X_n+X_nY_n+Y_n, &&& \\
	R_n	& :=U_n+U_nV_n+V_n. &&&
\end{align*}
\end{defi}
Notice that if $\ell=m$, then $\{0,1,\ldots, m-1\}\setminus\{\ell\}=\{0,1,\ldots, m-1\}$, so that
$j_{m,m}$ (resp. $k_{m,m}$) is the
number of permutations $\sigma\in \Sym_{m}$ such that 
$i+\sigma(i)\in J$ (resp. $\in K$) for all $i$.

\medskip

By Theorem \ref{thm:J} and Definition \ref{def:jk} the sequence $\bf f$ is Apwenian if and only if $Z_n\equiv 1 \pmod 2$. 
In Section \ref{sec:Algorithm} we describe an algorithm enabling us to {\it find} and also {\it prove} 
a list of recurrence relations between $X_n, Y_n, X_n, U_n, V_n, W_n$. 
Then, it is routine to check whether $Z_n\equiv 1\pmod 2$ or not.
Our program {\tt Apwen.py} is an implementation of the latter algorithm in Python.

We now produce the proof of Theorem \ref{thm:main} by means of the program {\tt Apwen.py}.  Since $F_2(x)$ has been proved to be Apwenian in \cite{APWW1998},  only the
three power series $F_3(x), F_5(x)$ and $F_{11}(x)$ 
require our attention.
We can also prove that $F_{13}(x), F_{17a}(x), F_{17b}(x)$ are Apwenian in the same manner. However, the full proofs are lengthy and are not reproduced in the paper. 

%>>>
\goodbreak
\subsection{$F_3(x)$ is Apwenian} % <<<
Take ${\bf v}=(1, -1, -1)$
with $d=3$ and $v_{d-1}=-1$. Then,  the corresponding infinite $\pm1$-sequence ${\bf f}$ is equal to $F_{3}(x)$. We have
$P=\{1\}$, $Q=\{2\}$ and
\begin{align*}
	J &
		=\{(3n+1)3^{2k}-1\mid n,k\in N\}\cup\{(3n+2)3^{2k+1}-1\mid n,k\in N\}				\\
	  &
		=\{0,3,5,6,8,9,12,14,15,18,\ldots\},	\\
	K &
		=\{(3n+2)3^{2k}-1 \mid n,k\in N\}\cup\{(3n+1)3^{2k+1}-1\mid n,k\in N\}				\\
	  &
		=\{1,2,4,7,10,11,13,16,17,\ldots\}=N\setminus J.
\end{align*}
By enumerating a list of 24 {\it types} of permutations (see Section 
\ref{sec:CountType}), the program {\tt Apwen.py} finds and proves the following recurrences.
 \begin{lemma}\label{lemma:main3XYZ}
For each $n\ge 1$ we have
\begin{align*}
	X_{3n+0}	&\equiv U_n, &                        	Y_{3n+0}	&\equiv U_n+V_n								,\\
	X_{3n+1}	&\equiv W_{n+1}(U_n+V_n), &           	Y_{3n+1}	&\equiv W_{n+1} V_{n}					,\\
	X_{3n+2}	&\equiv W_{n+1}(U_{n+1}+V_{n+1}), &   	Y_{3n+2}	&\equiv W_{n+1} V_{n+1}				,\\
	\noalign{\medskip}
	Z_{3n+0}	&\equiv W_{n} (U_n+U_nV_n+V_n)	,\\
	Z_{3n+1}	&\equiv W_{n+1} (U_n+U_nV_n+V_n),\\
	Z_{3n+2}	&\equiv W_{n+1}							.	
\end{align*}
\end{lemma}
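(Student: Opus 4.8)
The plan is to exploit the self-similarity of the indicator sequence $\delta$ under multiplication by $d=3$, so that the permutation counts of Definition \ref{def:jk} satisfy a block recursion in which the roles of $J$ and $K$ are interchanged. Everything reduces to one arithmetic fact about $J$, one dual fact about $K$, and then a finite bookkeeping over the ways a constrained permutation can distribute itself among the blocks.

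First I would isolate the arithmetic heart of the matter. Writing $t+1=3^{j}u$ with $\gcd(u,3)=1$, Lemma \ref{lemma:f=set} together with Definition \ref{def:JK} shows that $t\in J$ precisely when $j$ is even and $u\equiv 1$, or $j$ is odd and $u\equiv 2\pmod 3$. Reading this off for the three residues of $t$ gives
\begin{equation*}
\delta_{3t}=1,\qquad \delta_{3t+1}=0,\qquad \delta_{3t+2}=[\,t\in K\,],
\end{equation*}
and the complementary statement for $\epsilon_t:=[\,t\in K\,]=1-\delta_t$, namely $\epsilon_{3t}=0$, $\epsilon_{3t+1}=1$, $\epsilon_{3t+2}=[\,t\in J\,]$. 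This is the only place where the specific structure of $J,K$ for ${\bf v}=(1,-1,-1)$ is used.

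Next I would translate these identities into matrix form. Let $A^{(m)}$ be the $m\times m$ matrix with $A_{ij}=\delta_{i+j}$, so that $Y_m$, $Z_m$ and $X_m$ count the permutations supported on $A^{(m)}$ with, respectively, every row constrained, only the last row free, and one summed free row; let $B^{(m)}$ be the analogous matrix built from $\epsilon$, governing $U_m,V_m,W_m$. Reordering rows and columns by residue modulo $3$ and inserting the identities above, $A^{(3n)}$ takes the block form
\begin{equation*}
A^{(3n)}=\begin{pmatrix}\mathbf 1 & \mathbf 0 & B^{(n)}\\ \mathbf 0 & B^{(n)} & \mathbf 1\\ B^{(n)} & \mathbf 1 & \mathbf 0\end{pmatrix},
\end{equation*}
while $B^{(3n)}$ has the same shape with $\mathbf 0,\mathbf 1$ interchanged and $A^{(n)}$ replacing $B^{(n)}$; for $m=3n+1$ and $m=3n+2$ the only change is that the three residue groups no longer have equal size, two of them acquiring an extra index. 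A permutation counted by $X,Y,Z$ must avoid the zero blocks, hence is encoded by a \emph{transport plan} recording how many indices of each residue enter each admissible block, together with a matching inside every block; refined by the location of the free index, these plans are exactly the finitely many \emph{types}.

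The decisive and most delicate step is the reduction modulo $2$. Every all-ones block of size $c$ contributes a factor $c!$, odd only for $c\le 1$, and the multinomial coefficients that select which rows and columns enter each block collapse via $\binom{}{}^{k}\equiv\binom{}{}\pmod 2$; only a handful of transport plans survive, and the matchings inside the $B^{(n)}$-blocks reassemble into the level-$n$ quantities $U_n,V_n,W_n$ (the surviving plan with no all-ones index contributing $V_n^{3}\equiv V_n$, the plans that send one index into the all-ones blocks contributing the $U_n$-type sums, and so on). The non-uniform group sizes for $m\equiv 1,2\pmod 3$ are exactly what promote the bound to $n+1$ and produce the $W_{n+1}$ prefactors. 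I expect the main obstacle to be precisely this mod-$2$ enumeration: correctly listing all admissible types, tracking the extra row and column at the boundary of the index range (which separates $n$ from $n+1$ in the $3n+1$ and $3n+2$ rows), and verifying that the products of lower-level counts combine into the stated expressions $W_{n+1}(U_n+V_n)$, $W_n(U_n+U_nV_n+V_n)$, and the rest. This is the bookkeeping that the authors delegate to {\tt Apwen.py}, which discharges it by exhausting the finite list of types.
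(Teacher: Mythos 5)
Your setup is sound, and it is essentially the paper's own argument in matrix dress: the decomposition by residue of the indices modulo $3$, the identification of the three block types (all-ones when $i+j\equiv 0$, all-zeros when $i+j\equiv 1$, and a copy of the complementary problem when $i+j\equiv 2$ via $t\mapsto\lfloor t/3\rfloor$), and the mod-$2$ elimination of configurations that use an all-ones block more than once correspond exactly to the paper's rules (S1)--(S3), Lemma \ref{lemma:set}, Step~1, and Lemmas \ref{lemma:eq0}--\ref{lemma:eqsum0}. Your computation of $\delta_{3t},\delta_{3t+1},\delta_{3t+2}$ is correct and consistent with $P=\{1\}$, $Q=\{2\}$, $\bar J=K$.

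The gap is that the argument stops exactly where the lemma begins: none of the nine congruences is actually derived. You never list the surviving transport plans, never evaluate a single one, and explicitly defer ``the bookkeeping that the authors delegate to {\tt Apwen.py}'' --- but that bookkeeping \emph{is} the content of the statement (the paper's proof is precisely the exhaustion of the $24$ types reproduced in Output~1, each evaluated by Theorem \ref{thm:atom:Psi}). Two points are left unresolved rather than merely unwritten. First, for a fixed transport plan the count is not simply a product of factorials, block permanents and multinomial coefficients: the matching inside a $B^{(n)}$-block depends on \emph{which} row and column were diverted into the adjacent all-ones blocks, and it is the sum over those choices --- controlled by the involutions of Lemmas \ref{lemma:eq0} and \ref{lemma:eqsum0} --- that decides whether a block contributes $\bar X_n$, $\bar Y_n$, $\bar Z_{n+1}$ or $0$. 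Second, for $m=3n+1$ and $m=3n+2$ the blocks are not square (and you miscount here: for $m=3n+1$ only one residue class gains an extra index, not two), so one needs the padding and exchange substeps $3(a)$ and $3(e)$ to turn them into genuine $\bar{\SymJ}_{n,\ell}$ or $\bar{\SymJ}_{n+1,\ell}$ counts; this is exactly what produces the $W_{n+1}$ prefactors in the stated formulas, and you name it as ``the main obstacle'' without overcoming it. As it stands the proposal establishes the framework but not the lemma.
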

As explained in Section \ref{sec:CountType}, the above relations express $X,Y,Z$ in function of $U,V,W$ since $v_{d-1}=-1$. By exchanging the values of $P$ and $Q$, $J$ and $K$, the program {\tt Apwen.py} yields other relations
which express $U,V,W$ in terms of $X,Y,Z$ by enumerating a list of 26 types of permutations.
\begin{lemma}\label{lemma:main3UVW}
For each $n\ge 1$ we have
\begin{align*}
	U_{3n+0}	&\equiv X_n										,	&            	V_{3n+0}	&\equiv X_n+Y_n  							,\\
	U_{3n+1}	&\equiv Z_{n+1} Y_n						,&	           	V_{3n+1}	&\equiv Z_{n+1} X_{n}  				,\\
	U_{3n+2}	&\equiv Z_{n+1} Y_{n+1}				,&	           	V_{3n+2}	&\equiv Z_{n+1} X_{n+1}				,\\
	\noalign{\medskip}
	W_{3n+0}	&\equiv Z_{n} (X_n+X_nY_n+Y_n)	,\\
	W_{3n+1}	&\equiv Z_{n+1} (X_n+X_nY_n+Y_n),\\
	W_{3n+2}	&\equiv Z_{n+1}								.\\
\end{align*}
\end{lemma}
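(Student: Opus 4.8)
The plan is to obtain Lemma \ref{lemma:main3UVW} not by a direct manipulation of Lemma \ref{lemma:main3XYZ} but by re-running the same counting machinery on a \emph{partner} sequence. The guiding observation is that, by Definition \ref{def:jk}, the triple $(U_n, V_n, W_n)$ is built from the set $K$ in exactly the way the triple $(X_n, Y_n, Z_n)$ is built from $J$; the two triples are therefore interchanged by any operation that swaps $J$ and $K$. Since Lemma \ref{lemma:main3XYZ} expresses the $J$-triple through the $K$-triple and I want the reverse relations, I would realize the exchange $J \leftrightarrow K$ as a genuine change of input to the algorithm of Section \ref{sec:CountType}.

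Concretely, I would first interchange the sets $P$ and $Q$. For $\mathbf{v} = (1,-1,-1)$ this replaces $P = \{1\}, Q = \{2\}$ by $P' = \{2\}, Q' = \{1\}$, which are realized by the partner sequence $\mathbf{v}' = (1,1,-1)$ (still with $v'_{d-1} = -1$). Feeding $P', Q'$ into Definition \ref{def:JK} gives $J' = K$ and $K' = J$. I would then record the resulting dictionary: since $\mathfrak{J}'_{m,\ell}$ is defined by $i + \sigma_i \in J' = K$ for $i \neq \ell$, it coincides with $\mathfrak{K}_{m,\ell}$, and likewise $\mathfrak{K}'_{m,\ell} = \mathfrak{J}_{m,\ell}$. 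Passing to cardinalities and summing as in Definition \ref{def:jk} yields
\[
(X'_n, Y'_n, Z'_n) = (U_n, V_n, W_n), \qquad (U'_n, V'_n, W'_n) = (X_n, Y_n, Z_n).
\]

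With this dictionary in hand, I would run the type-enumeration algorithm of Section \ref{sec:CountType} on $\mathbf{v}'$. Because $v'_{d-1} = -1$, it falls in the same case as before, and the program {\tt Apwen.py} produces, by the same procedure that gave Lemma \ref{lemma:main3XYZ}, a complete list of recurrences expressing $(X', Y', Z')$ in terms of $(U', V', W')$. Substituting the dictionary termwise converts each such relation into one for $(U, V, W)$ in terms of $(X, Y, Z)$, which is precisely the content of Lemma \ref{lemma:main3UVW}.

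The point I would stress — and the real obstacle — is that this is \emph{not} a cosmetic relabeling of Lemma \ref{lemma:main3XYZ}. Because $P' = \{2\} \neq \{1\} = P$, the residues feeding $J' = K$ differ from those feeding $J$, so the arithmetic of the constraint $i + \sigma_i \in K$ is genuinely different and the recurrences change shape: contrast $X_{3n+1} \equiv W_{n+1}(U_n + V_n)$ in Lemma \ref{lemma:main3XYZ} with its translated analogue $U_{3n+1} \equiv Z_{n+1} Y_n$ here. Consequently the exhaustive type analysis must be redone for $\mathbf{v}'$ — now the $26$ types rather than the $24$ — and the bottleneck is exactly this re-enumeration, together with the atom decomposition and counting described in Section \ref{sec:CountType}, which is what the computer carries out.
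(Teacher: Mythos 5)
Your proposal is correct and follows essentially the same route as the paper: the authors also obtain Lemma \ref{lemma:main3UVW} by exchanging $P$ with $Q$ (hence $J$ with $K$, so that the roles of $(X,Y,Z)$ and $(U,V,W)$ are interchanged) and re-running the type-enumeration and atom-counting machinery of Sections \ref{sec:CountType}--\ref{sec:Algorithm}, which produces the $26$ types shown in Output~2. Your extra remark that the swapped pair $P'=\{2\}$, $Q'=\{1\}$ is realized by the sequence $(1,1,-1)$ is a harmless embellishment; the algorithm depends only on $P$, $Q$ and $v_{d-1}$, so the paper simply swaps $P$ and $Q$ directly.
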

From Lemmas \ref{lemma:main3XYZ} and \ref{lemma:main3UVW}
we obtain the following ``simplified'' recurrence relations based on some elementary calculations.
\begin{coroll}\label{cor:moresimp3} For each positive integer $n$ we have
  \begin{align*}
	Z_{3n+0}	&\equiv W_n R_n					, & W_{3n+0}	&\equiv Z_n T_n					,\\
	Z_{3n+1}	&\equiv W_{n+1} R_n			, & W_{3n+1}	&\equiv Z_{n+1} T_n			,\\
	Z_{3n+2}	&\equiv W_{n+1}					, & W_{3n+2}	&\equiv Z_{n+1}								,\\
	T_{3n+0}	&\equiv R_n							, & R_{3n+0}	&\equiv T_n										,\\
	T_{3n+1}	&\equiv W_{n+1} R_n			, & R_{3n+1}	&\equiv Z_{n+1} T_n			,\\
	T_{3n+2}	&\equiv W_{n+1} R_{n+1}	, & R_{3n+2}	&\equiv Z_{n+1} T_{n+1}.
   \end{align*}
  \end{coroll}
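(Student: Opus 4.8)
The plan is to split the twelve congruences into two kinds. The six relations for $Z_{3n+j}$ and $W_{3n+j}$ with $j\in\{0,1,2\}$ require no computation at all: by Definition~\ref{def:jk} we have $R_n=U_n+U_nV_n+V_n$ and $T_n=X_n+X_nY_n+Y_n$, so the three formulas for $Z_{3n+j}$ in Lemma~\ref{lemma:main3XYZ} and the three for $W_{3n+j}$ in Lemma~\ref{lemma:main3UVW} already coincide with the claimed relations once the expressions $U_n+U_nV_n+V_n$ and $X_n+X_nY_n+Y_n$ are abbreviated to $R_n$ and $T_n$.

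The remaining six relations, for $T_{3n+j}$ and $R_{3n+j}$, are where the ``elementary calculations'' occur, and I would organize them around a single auxiliary form. Write $g(x,y):=x+xy+y$, so that by Definition~\ref{def:jk} one has $T_m=g(X_m,Y_m)$ and $R_m=g(U_m,V_m)$. Working modulo~$2$ and using the idempotency $a^2\equiv a\pmod 2$ valid for every integer $a$, the form $g$ satisfies two properties that I would record first, namely affine invariance and homogeneity:
\[
g(x,\,x+y)\equiv g(x,y)\pmod 2,\qquad g(Wc,\,Wd)\equiv W\,g(c,d)\pmod 2 .
\]
The first follows from $g(x,x+y)=x+x(x+y)+(x+y)=2x+x^2+xy+y\equiv x+xy+y$, and together with the evident symmetry $g(x,y)=g(y,x)$ it lets one absorb any summand into either slot; the second follows from $g(Wc,Wd)=Wc+W^2cd+Wd\equiv W(c+cd+d)$ since $W^2\equiv W$. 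With these in hand, each of the six relations becomes a one-line substitution of the formulas of Lemmas~\ref{lemma:main3XYZ} and~\ref{lemma:main3UVW}. For instance $T_{3n}=g(X_{3n},Y_{3n})=g(U_n,\,U_n+V_n)\equiv g(U_n,V_n)=R_n$, while
\[
T_{3n+1}=g\bigl(W_{n+1}(U_n+V_n),\,W_{n+1}V_n\bigr)\equiv W_{n+1}\,g(U_n+V_n,V_n)\equiv W_{n+1}\,g(U_n,V_n)=W_{n+1}R_n .
\]
The case $T_{3n+2}$ is identical with $n$ replaced by $n+1$ throughout, and the three $R_{3n+j}$ relations are obtained the same way from the $U,V$ formulas of Lemma~\ref{lemma:main3UVW}; for example $R_{3n+1}=g(Z_{n+1}Y_n,\,Z_{n+1}X_n)\equiv Z_{n+1}\,g(X_n,Y_n)=Z_{n+1}T_n$.

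I do not expect a genuine obstacle here. The whole content is the observation that, modulo~$2$, the quadratic expression $g$ is both homogeneous of degree one and invariant under the shift $y\mapsto x+y$, which is exactly what collapses the mixed right-hand sides of the two lemmas back into clean products of $R_n,T_n$ with $Z_n,W_n$. The only points demanding care are clerical: selecting the correct lemma line for each residue class $j\in\{0,1,2\}$, and correctly tracking the index shifts between $n$ and $n+1$.
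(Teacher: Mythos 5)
Your proposal is correct and is essentially the paper's own argument: the paper derives Corollary~\ref{cor:moresimp3} from Lemmas~\ref{lemma:main3XYZ} and~\ref{lemma:main3UVW} ``based on some elementary calculations,'' and your computations with $g(x,y)=x+xy+y$ --- using $a^2\equiv a\pmod 2$, the invariance $g(x,x+y)\equiv g(x,y)$, and the homogeneity $g(Wc,Wd)\equiv W\,g(c,d)$ --- are exactly those calculations made explicit. All twelve congruences check out, including the immediate ones for $Z_{3n+j}$ and $W_{3n+j}$ and the index shifts $n\mapsto n+1$ in the $j=2$ cases.
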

	Since $Z_1=1, T_1=3, W_1=1, R_1=1, Z_2=1, T_2=1, W_2=1$ and $R_2=7$, Corollary \ref{cor:moresimp3} 
	yields 
	$Z_{m}\equiv T_m \equiv W_m \equiv R_m \equiv 1 \pmod 2$ for every positive integer $m$
by induction.  Hence, $F_{3}(x)$ is Apwenian.

% >>>
\subsection{$F_5(x)$ is Apwenian}%<<<
Take ${\bf v}=(1, -1, -1, -1, 1)$ with $d=5$ and $v_{d-1}=1$. Then,  the corresponding infinite $\pm1$-sequence ${\bf f}$ is equal to $F_5(x)$. We have
\begin{align*}
	P&=\{1,4\}, \\
	Q&=\{2,3\}, \\
J&=\{(5n+1)5^{k}-1\ |\ n,k\in N\}\cup \{(5n+4)5^{k}-1\ |\ n,k\in N\}\\
	&=\{0, 3, 4, 5, 8, 10, 13, 15, 18, 19, 20, 23, 24, 25, 28, 29, 30, 33,  \ldots\},\\
K&=\{(5n+2)5^{k}-1\ |\ n,k\in N\}\cup \{(5n+3)5^{k}-1\ |\ n,k\in N\}\\
	&=\{1, 2, 6, 7, 9, 11, 12, 14, 16, 17, 21, 22, 26, 27, 31, 32, 34, 36, \ldots\}.
\end{align*}
By enumerating a list of 225 {\it types} of permutations, the Python program {\tt Apwen.py} finds and proves the following recurrences.
\begin{lemma}\label{lemma:main5}
	For each $n\geq 1$ we have
\begin{align*}
X_{5n+0}&\equiv X_{n},  &                         Y_{5n+0}&\equiv Y_n,\\
X_{5n+1}&\equiv Z_{n+1} Y_{n},  &                 Y_{5n+1}&\equiv Z_{n+1} (X_{n}+Y_{n}),\\
X_{5n+2}&\equiv Z_{n+1} (X_{n}+Y_{n}),  &         Y_{5n+2}&\equiv Z_{n+1} X_{n},\\
X_{5n+3}&\equiv Z_{n+1} (X_{n+1}+Y_{n+1}),  &     Y_{5n+3}&\equiv Z_{n+1} X_{n+1},\\
X_{5n+4}&\equiv Z_{n+1} Y_{n+1},  &               Y_{5n+4}&\equiv Z_{n+1} (X_{n+1}+Y_{n+1}),\\
	\noalign{\medskip}
Z_{5n+0}&\equiv \rlap{$Z_{n}(X_n+X_nY_n+Y_n),$} &\\
Z_{5n+1}&\equiv \rlap{$Z_{n+1}(X_n+X_nY_n+Y_n),$} &\\
Z_{5n+2}&\equiv \rlap{$Z_{n+1}(X_n+X_nY_n+Y_n),$} &\\
Z_{5n+3}&\equiv \rlap{$Z_{n+1},$} &\\
Z_{5n+4}&\equiv \rlap{$Z_{n+1}(X_{n+1}+X_{n+1}Y_{n+1}+Y_{n+1}).$} &
\end{align*}
\end{lemma}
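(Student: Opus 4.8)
The plan is to exploit the self-similar base-$5$ structure of $J$ to reduce the enumeration of constrained permutations of $\Sym_{5n+r}$ to that of constrained permutations of $\Sym_n$ and $\Sym_{n+1}$, which are exactly what $X,Y,Z$ record. The engine is a digit characterization of membership in $J$: since $v_{d-1}=1$, the computation in the proof of Lemma~\ref{lemma:f=set} shows that $t\in J$ precisely when the lowest nonzero base-$5$ digit of $t+1$ lies in $P=\{1,4\}$, and $t\in K$ when it lies in $Q=\{2,3\}$. In particular $5M-1\in J\iff M-1\in J$; because $v_{d-1}=1$ this one-level reduction \emph{preserves} membership in $J$ rather than interchanging $J$ and $K$, which is why, in contrast to the case of $F_3$, the recurrences close up within the single family $X,Y,Z$ and never invoke $U,V,W$.

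First I would pass everything to base $5$. For $\sigma\in\Sym_{5n+r}$ and a position $i$, write $i=5i'+a$ and $\sigma_i=5j'+b$ with $a,b\in\{0,1,2,3,4\}$, so that $i+\sigma_i=5(i'+j')+(a+b)$. Feeding $t=i+\sigma_i$ into the digit characterization gives a clean trichotomy governed by $a+b\in\{0,1,\dots,8\}$: the constraint $i+\sigma_i\in J$ holds \emph{unconditionally} when $a+b\in\{0,3,5,8\}$, it \emph{fails} unconditionally (the sum lands in $K$) when $a+b\in\{1,2,6,7\}$, and it is \emph{equivalent to the reduced constraint} $i'+j'\in J$ exactly in the carry case $a+b=4$. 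Thus $a+b=4$ is the sole channel through which the $J$-condition propagates one level down the base-$5$ hierarchy.

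Next I would split $\{0,1,\dots,5n+r-1\}$ into its five residue classes modulo $5$ --- a class has $n+1$ elements if its residue is less than $r$ and $n$ elements otherwise --- and do the same for the values. A permutation meeting the $J$-constraints then factors as: (i) a residue-level assignment recording, for each residue $a$, how many positions of residue $a$ are sent to values of residue $b$, where the constrained positions may use only the admissible sums $a+b\in\{0,3,4,5,8\}$ while the single exempted position, when present, is free; together with (ii) for the sub-family of pairs with $a+b=4$, a reduced permutation of the associated $i'$'s onto the $j'$'s that itself satisfies $i'+j'\in J$. Piece (ii) is counted precisely by $X,Y,Z$ at level $n$ or $n+1$, according to how many reduced indices arise and which single reduced index, if any, inherits the relaxed constraint; piece (i) contributes only multiplicities. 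Because we work modulo $2$ the signs $(-1)^{\inv\sigma}$ drop out and the count of each configuration is simply the product of its independent factors --- the decomposition into \emph{atoms} mentioned in the introduction. Running over $r\in\{0,1,2,3,4\}$ and over the three statistics (all-but-one $X$, all $Y$, all-but-last $Z$) then yields the fifteen congruences of the Lemma.

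The hard part lies entirely in the bookkeeping of steps (i)--(ii). One must enumerate all admissible residue-assignment patterns --- the \emph{types}, of which there are $225$ for $F_5$ --- and, for each, decide exactly which reduced problem it induces: whether the sub-permutation lives in $\Sym_n$ or $\Sym_{n+1}$, which reduced position inherits the relaxed index $\ell$, and hence which of $X_n,Y_n,Z_n,X_{n+1},Y_{n+1},Z_{n+1}$ the type contributes. Each single matching is routine, but the number of types far exceeds what can be checked by hand, and aggregating their parities into the stated right-hand sides is precisely the enumeration that \texttt{Apwen.py} performs and certifies.
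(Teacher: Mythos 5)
Your overall route is the paper's own: the base-$d$ digit characterization of $J$, the observation that $v_{d-1}=1$ makes the one-level reduction preserve $J$ (so the recurrences close within $X,Y,Z$ without invoking $U,V,W$), the trichotomy on $a+b$ (the paper's cases (S1)--(S3), with the carry channel $a+b=d-1$ as the only one that propagates the constraint downward via $\beta$), the block decomposition into reduced subproblems counted by $X,Y,Z$ at levels $n$ and $n+1$ (the paper's \emph{atoms}), and the computer enumeration of types.

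There is, however, one genuine gap. You never justify why the enumeration reduces to a \emph{finite} list of $225$ types. A ``residue-level assignment recording, for each residue $a$, how many positions of residue $a$ are sent to values of residue $b$'' has multiplicities that a priori grow with $n$: nothing in your argument prevents a permutation from containing many biletters with $a+b\in\{0,3,5,8\}$ (the paper's \emph{unsociable} biletters), and such configurations do not decompose into a bounded family of patterns. The missing idea is the parity cancellation of the paper's Step~1: if a permutation contains two unsociable biletters whose top letters lie in the same residue class, swapping their bottom letters is an involution on the set being counted, so these permutations cancel in pairs modulo $2$; only afterwards does each residue class carry at most one unsociable biletter, the type become a word of length $d$ or $d+1$, and the count factor as a product of $d$ atoms. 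Companion involutions (the paper's Lemmas~\ref{lemma:eq0} and~\ref{lemma:eqsum0}) are likewise needed to kill the types in which two unsociable or specific biletters share a bottom residue, since for those the block-by-block product does not apply. Without these cancellation arguments the claim that the fifteen congruences follow from checking $225$ cases does not stand; with them added, your plan coincides with the paper's proof.
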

As explained in Section \ref{sec:Algorithm}, the above relations are between $X,Y,Z$ without involving $U,V,W$, since $v_{d-1}=1$.
We obtain the following ``simplified'' recurrence relations based on some elementary calculations.
  \begin{coroll}\label{cor:moresimp5} For each positive integer $n$ we have
\begin{align*}                              
	Z_{5n+0}&\equiv Z_{n} T_n, &        T_{5n+0}&\equiv T_n,\\        
	Z_{5n+1}&\equiv Z_{n+1} T_n,&       T_{5n+1}&\equiv Z_{n+1} T_n,\\
	Z_{5n+2}&\equiv Z_{n+1} T_n,&       T_{5n+2}&\equiv Z_{n+1} T_n,\\
	Z_{5n+3}&\equiv Z_{n+1}, &        	T_{5n+3}&\equiv Z_{n+1} T_{n+1},\\
	Z_{5n+4}&\equiv Z_{n+1} T_{n+1},&   T_{5n+4}&\equiv Z_{n+1} T_{n+1}.
\end{align*}
\end{coroll}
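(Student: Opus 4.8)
The plan is to treat this corollary as a purely algebraic consequence of Lemma~\ref{lemma:main5}, working throughout modulo $2$ (i.e.\ in $\mathbb{F}_2$), regarding all of $X_n, Y_n, Z_n$ as their residues. The six $Z$-relations require no work at all: each of the five expressions for $Z_{5n+i}$ in Lemma~\ref{lemma:main5} already has the shape $Z_{\bullet}(X_m + X_m Y_m + Y_m)$ or $Z_{\bullet}$, so substituting the definition $T_m := X_m + X_m Y_m + Y_m$ turns them verbatim into the right-hand sides displayed in the corollary. Hence the entire content is the derivation of the five $T$-relations from the $X$- and $Y$-recurrences of Lemma~\ref{lemma:main5}.

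For that I would first isolate the relevant algebra of the ``$T$-form''. Write $T(a,b) := a + b + ab$, so that $T_m = T(X_m, Y_m)$ and $T$ is symmetric in its two arguments. Two identities over $\mathbb{F}_2$ do all the work. The first is homogeneity: since $Z^2 \equiv Z \pmod 2$,
\begin{equation*}
	T(Za, Zb) = Z(a+b) + Z^2 ab \equiv Z(a + b + ab) = Z\,T(a,b) \pmod 2.
\end{equation*}
The second is a shearing invariance: using the idempotency $a^2 \equiv a$ and $b^2 \equiv b \pmod 2$, a one-line expansion gives
\begin{equation*}
	T(a,\, a+b) \equiv T(a,b) \qquad\text{and}\qquad T(a+b,\, b) \equiv T(a,b) \pmod 2.
\end{equation*}

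With these in hand each case collapses immediately. For $T_{5n+0}$ I substitute $X_{5n}\equiv X_n$ and $Y_{5n}\equiv Y_n$ to get $T(X_n,Y_n)=T_n$. For $T_{5n+1}$, Lemma~\ref{lemma:main5} supplies the pair $\bigl(Z_{n+1}Y_n,\ Z_{n+1}(X_n+Y_n)\bigr)$; homogeneity pulls out $Z_{n+1}$ and leaves $T(Y_n,\,X_n+Y_n)$, which by shearing and symmetry equals $T(Y_n,X_n)=T_n$, so $T_{5n+1}\equiv Z_{n+1}T_n$. The case $T_{5n+2}$ uses the pair $\bigl(Z_{n+1}(X_n+Y_n),\ Z_{n+1}X_n\bigr)$ and the companion identity $T(X_n+Y_n,\,X_n)=T(Y_n,X_n)=T_n$, again yielding $Z_{n+1}T_n$. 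The remaining two cases $T_{5n+3}$ and $T_{5n+4}$ are literally these same two computations with every index $n$ replaced by $n+1$, producing $Z_{n+1}T_{n+1}$ in both.

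I do not expect a genuine obstacle here, since the real combinatorial content lives in Lemma~\ref{lemma:main5} (established by the type enumeration of {\tt Apwen.py}) and this corollary is only its algebraic distillation. The one point demanding care is bookkeeping: one must track which cases carry the argument $n$ and which carry $n+1$, and must be disciplined about reducing modulo $2$ at every stage, so that $a^2 \equiv a$ and $Z^2 \equiv Z$ may be applied. These idempotency relations are precisely what make the homogeneity and shearing identities valid, and they explain why the pairs $(5n+1,5n+2)$ and $(5n+3,5n+4)$ coincide despite arising from visibly different $(X,Y)$ data.
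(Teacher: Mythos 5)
Your proposal is correct and is essentially the paper's own argument: the paper dispatches this corollary with the phrase ``based on some elementary calculations'' from Lemma~\ref{lemma:main5}, and your mod-$2$ identities $T(Za,Zb)\equiv Z\,T(a,b)$ and $T(a,a+b)\equiv T(a,b)$ (both resting on idempotency of residues) are exactly those calculations made explicit. The only blemish is the harmless slip of calling the five $Z$-relations ``six''.
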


Since $Z_1=1,T_1=3,Z_2=1,T_2=1,Z_3=1,T_3=9,Z_4=5,T_4=129$, Corollary \ref{cor:moresimp5} yields 
$Z_{n}\equiv T_n  \equiv 1 \pmod 2$ for each $n\ge 1$ by induction.
Hence, $F_{5}(x)$ is Apwenian.
% >>>

\subsection{$F_{11}(x)$ is Apwenian}%<<<
Take 
$${\bf v}=(1, -1, -1, 1, -1, 1, 1, 1, 1, -1, -1)$$ 
with $d=11$ and $v_{d-1}=-1$. Then,  the corresponding infinite $\pm1$-sequence ${\bf f}$ is equal to $F_{11}(x)$. We have
\begin{align*}
	P&=\{1, 3, 4, 5, 9\}, \\
	Q&=\{2, 6, 7, 8, 10\}, \\
J& =\{0, 2, 3, 4, 8, 11, 13, 14, 15, 19, 21, 22, 24, 25, 26, 30, 33, 35,  \ldots\},\\
K&= \{1, 5, 6, 7, 9, 10, 12, 16, 17, 18, 20, 23, 27, 28, 29, 31, 32, 34, \ldots\}.
\end{align*}
By enumerating a list of 2274558 {\it types} of permutations, the program {\tt Apwen.py} finds and proves the following recurrences.
\begin{lemma}\label{lemma:main11XYZ}
For each $n\geq 1$ we have
\begin{align*}
    X_{11n+0}&\equiv U_{n} , &                               Y_{11n+0}&\equiv U_{n} + V_{n}, \\
		X_{11n+1}&\equiv W_{n+1}(V_{n} + U_{n}), &               Y_{11n+1}&\equiv V_{n}W_{n+1}, \\
    X_{11n+2}&\equiv U_{n}W_{n+1}, &                     	Y_{11n+2}&\equiv W_{n+1} (V_{n}+ U_{n}), \\
	X_{11n+3}&\equiv W_{n+1}(V_{n} + U_{n}), &                 Y_{11n+3}&\equiv V_{n}W_{n+1}, \\
    X_{11n+4}&\equiv V_{n}W_{n+1}, &                         Y_{11n+4}&\equiv U_{n}W_{n+1}, \\
    X_{11n+5}&\equiv U_{n}W_{n+1}, &                     	Y_{11n+5}&\equiv W_{n+1}(V_{n}+ U_{n}), \\
    X_{11n+6}&\equiv U_{n+1}W_{n+1}, &                   	Y_{11n+6}&\equiv W_{n+1}(U_{n+1} + V_{n+1}), \\
    X_{11n+7}&\equiv V_{n+1}W_{n+1}, &                       Y_{11n+7}&\equiv U_{n+1}W_{n+1}, \\
	X_{11n+8}&\equiv W_{n+1}(U_{n+1}+ V_{n+1}), &              Y_{11n+8}&\equiv V_{n+1}W_{n+1}, \\
    X_{11n+9}&\equiv U_{n+1}W_{n+1}, &                   	Y_{11n+9}&\equiv W_{n+1}(U_{n+1}+ V_{n+1}), \\
	X_{11n+10}&\equiv W_{n+1}(U_{n+1}+ V_{n+1}), &             Y_{11n+10}&\equiv V_{n+1}W_{n+1}, \\ 
	\noalign{\medskip}
	Z_{11n+0}&\equiv \rlap{$W_{n}(V_{n} + U_{n} + U_{n}V_{n}),$} &\\
	Z_{11n+i}&\equiv \rlap{$W_{n+1}(U_{n}V_{n} + V_{n} + U_{n}), \qquad (i=1,2,3,4,5)$} \\
	Z_{11n+6}&\equiv \rlap{$W_{n+1},$} \\
	Z_{11n+i}&\equiv \rlap{$W_{n+1}(U_{n+1} + U_{n+1}V_{n+1} + V_{n+1}). \qquad (i=7,8,9,10)$} 
\end{align*}
\end{lemma}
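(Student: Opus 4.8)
The plan is to reduce every count occurring in Definition~\ref{def:jk} at size $m=11n+r$ to the corresponding $K$-counts at the two smaller sizes $n$ and $n+1$, by means of a single descent rule for membership in $J$, and then to organize that reduction around the residues modulo $d=11$. Throughout, the identities are read modulo $2$, which by \eqref{eqn:sumt} lets me ignore the sign $(-1)^{\inv(\sigma)}$ and simply count admissible permutations rather than sum signed weights.

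First I would upgrade Lemma~\ref{lemma:f=set} into a purely local rule about a single entry $i+\sigma_i$. Writing $i=di'+\alpha$ and $\sigma_i=ds'+\beta$ with $\alpha,\beta\in\{0,\dots,d-1\}$, the computation in the proof of Lemma~\ref{lemma:f=set} shows that membership $i+\sigma_i\in J$ is governed by the $d$-adic valuation $\nu_d(i+\sigma_i+1)$ together with the last nonzero base-$d$ digit. If $\alpha+\beta\neq d-1$ there is no carry out of the units position, so $\nu_d(i+\sigma_i+1)=0$ and, by \eqref{cond:J}, the condition $i+\sigma_i\in J$ is equivalent to the purely local test $(\alpha+\beta+1\bmod d)\in P$, depending on the residue pair $(\alpha,\beta)$ alone. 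If instead $\alpha+\beta=d-1$, then $i+\sigma_i+1=d(i'+s'+1)$, the valuation jumps by one, and since here $v_{d-1}=-1$ the sign in \eqref{cond:J} flips; hence in this single case $i+\sigma_i\in J$ if and only if $i'+s'\in K$. This sign flip is precisely why the recurrences for the $J$-counts $X,Y,Z$ are forced to be expressed through the $K$-counts $U,V,W$.

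Next I would fix $m=11n+r$, split the index set and the value set $\{0,\dots,m-1\}$ into residue classes modulo $11$ (a class having $n$ or $n+1$ elements according as $\alpha\ge r$ or $\alpha<r$), and classify a permutation by how each index class is distributed among the value classes. By the rule above, a cell $(\alpha,\beta)$ with $\alpha+\beta\neq d-1$ is either \emph{forbidden}, killing the product in \eqref{eqn:sumt} outright, or \emph{free}, imposing no constraint at all on the higher parts $i',s'$. Only the anti-diagonal cells $\alpha+\beta=d-1$ retain a genuine constraint, namely $i'+s'\in K$, which is exactly a $\mathfrak{K}$-type condition on the reduced indices. Freezing the combinatorial pattern that records how the higher parts are routed through the anti-diagonal, and how the free position $\ell$ and the class sizes fall, is exactly the notion of \emph{type} of Section~\ref{sec:CountType}, encoded there by a word $s_0\cdots s_d$ of length $d+1$. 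Relative to a fixed type, the admissible permutations split as a Cartesian product of \emph{atoms}, each atom being either a free matching inside a residue cell or a $\mathfrak{K}$-constrained sub-permutation of size $n$ or $n+1$; reading off which atoms survive then expresses the count as a product built from $U_n,V_n,W_n$ and their shifts, the distinction among the three flavors $X_{11n+r}=\sum_\ell j_{11n+r,\ell}$, $Y_{11n+r}=j_{11n+r,\,11n+r}$, $Z_{11n+r}=j_{11n+r,\,11n+r-1}$ being dictated entirely by where the free position $\ell$ lands.

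Finally I would evaluate, for each residue $r\in\{0,\dots,10\}$ and each flavor, the product of atom-cardinalities summed over all types and reduced modulo $2$, the parity of each atom being supplied by the sequence of tests of Definition~\ref{def:mu} and Table~\ref{tab:Psi}; the surviving contributions should collapse to the right-hand sides listed in the lemma. The genuine obstacle is not conceptual but a combinatorial explosion: for $d=11$ the number of types equals $2274558$, so the summation over types, together with the delicate boundary bookkeeping where the free position interacts with the $n$-versus-$(n+1)$ class sizes along the anti-diagonal, simply cannot be carried out by hand. This is the step I would delegate to the program \texttt{Apwen.py} of Section~\ref{sec:Algorithm}, which both discovers and certifies the recurrences by exhausting all types; the human-checkable content that makes the automation trustworthy is the descent rule of the first step, the atom factorization of the second, and the correctness of the atom-parity tests of Definition~\ref{def:mu}.
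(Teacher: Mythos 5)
Your proposal is correct and follows essentially the same route as the paper: the local residue rule (S1)--(S3) together with Lemma~\ref{lemma:set}(iii) (your ``descent rule,'' which is where $v_{d-1}=-1$ forces the $X,Y,Z\to U,V,W$ crossover), then the mod-$2$ classification by types, the atom factorization evaluated via Definition~\ref{def:mu} and Table~\ref{tab:Psi}, and the exhaustive machine enumeration of the $2274558$ types by {\tt Apwen.py}. This is precisely how the paper establishes the lemma, so no further comment is needed.
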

As explained in Section \ref{sec:Algorithm}, the above relations express $X,Y,Z$ in function of $U,V,W$, since $v_{d-1}=-1$. By exchanging the values of $P$ and $Q$, $J$ and $K$, the program {\tt Apwen.py} yields another list of relations
which express $U,V,W$ in terms of $X,Y,Z$. For this purpose, a long list of 2350964 {\it types} of permutations are enumerated. 
\goodbreak
\begin{lemma}\label{lemma:main11UVW}
For each $n\geq 1$ we have
\begin{align*}
  U_{11n+0}&\equiv X_{n}, &                          V_{11n+0}&\equiv X_{n} + Y_{n}, \\
    U_{11n+1}&\equiv Y_{n}Z_{n+1}, &                   V_{11n+1}&\equiv X_{n}Z_{n+1}, \\
    U_{11n+2}&\equiv X_{n}Z_{n+1}, &               	V_{11n+2}&\equiv Z_{n+1}(X_{n}+ Y_{n}), \\
    U_{11n+3}&\equiv Y_{n}Z_{n+1}, &                   V_{11n+3}&\equiv X_{n}Z_{n+1}, \\
	U_{11n+4}&\equiv Z_{n+1}(X_{n}+ Y_{n}), &            V_{11n+4}&\equiv Y_{n}Z_{n+1}, \\
    U_{11n+5}&\equiv X_{n}Z_{n+1}, &               	V_{11n+5}&\equiv Z_{n+1}(X_{n}+ Y_{n}), \\
    U_{11n+6}&\equiv X_{n+1}Z_{n+1}, &             	V_{11n+6}&\equiv Z_{n+1}(Y_{n+1}+ X_{n+1}), \\
	U_{11n+7}&\equiv Z_{n+1}(Y_{n+1}+ X_{n+1}), &        V_{11n+7}&\equiv Y_{n+1}Z_{n+1}, \\
    U_{11n+8}&\equiv Y_{n+1}Z_{n+1}, &                 V_{11n+8}&\equiv X_{n+1}Z_{n+1}, \\
    U_{11n+9}&\equiv X_{n+1}Z_{n+1}, &             	V_{11n+9}&\equiv Z_{n+1}(Y_{n+1}+ X_{n+1}), \\
    U_{11n+10}&\equiv Y_{n+1}Z_{n+1}, &                V_{11n+10}&\equiv X_{n+1}Z_{n+1}, \\
	\noalign{\medskip}
	W_{11n+0}&\equiv \rlap{$Z_{n}(Y_{n} + X_{n} + X_{n}Y_{n}),$} \\
	W_{11n+i}&\equiv \rlap{$Z_{n+1}(X_{n}Y_{n} + X_{n} + Y_{n}), \qquad (i=1,2,3,4,5)$} \\
    W_{11n+6}&\equiv \rlap{$Z_{n+1},$} \\
	W_{11n+i}&\equiv \rlap{$Z_{n+1}(Y_{n+1} + X_{n+1}Y_{n+1} + X_{n+1}). \qquad (i=7,8,9,10)$} 
\end{align*}
\end{lemma}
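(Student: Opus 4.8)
The plan is to observe that, by Definition~\ref{def:jk}, the triple $U_n,V_n,W_n$ is obtained from $X_n,Y_n,Z_n$ by replacing the set $J$ with the set $K$ everywhere. The two sets enter Definition~\ref{def:JK} in a perfectly symmetric way: interchanging the blocks $P$ and $Q$ carries the formula for $J$ into the formula for $K$ and conversely. This exchange is realised at the level of sequences by the involution $v_i\mapsto(-1)^iv_i$, which turns each sign change of $\mathbf v$ into a sign constancy and vice versa, hence swaps $P$ and $Q$; since $d=11$ is odd it preserves the value $v_{d-1}=-1$, so the same branch of Definition~\ref{def:JK} governs the transformed data. Consequently the whole combinatorial apparatus of Section~\ref{sec:Algorithm} that produced Lemma~\ref{lemma:main11XYZ} applies \emph{verbatim} to the $K$-constrained count, and I would re-run it with the roles of $J$ and $K$ interchanged.

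Concretely, the first step is to record the self-similar recursion for membership in $K$ that mirrors Lemma~\ref{lemma:f=set}: writing an index as $t=(11n+\ell)\,11^{k}-1$, the condition $t\in K$ depends only on whether $\ell\in P$ or $\ell\in Q$ and on the parity of $k$, exactly as for $J$ but with $P$ and $Q$ exchanged. Next, for each residue $r\in\{0,1,\ldots,10\}$, I would analyse the permutations of $\Sym_{11n+r}$ counted by $U_{11n+r}$, $V_{11n+r}$ and $W_{11n+r}$, sorting them into the \emph{types} of Section~\ref{sec:CountType}---the words $s_0s_1\cdots s_{d}$ over the eleven-letter alphabet. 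Then each type's permutation set factors as a Cartesian product of \emph{atoms}, whose cardinalities I would reduce modulo $2$ using the tests of Definition~\ref{def:mu} and Table~\ref{tab:Psi}. Finally, summing the contributions of all types modulo $2$ expresses each of $U_{11n+r}$, $V_{11n+r}$, $W_{11n+r}$ as the asserted polynomial in $X_n,Y_n,Z_n$ and their shifts $X_{n+1},Y_{n+1},Z_{n+1}$.

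The hard part is not conceptual but computational, and it is worth stressing that the result is genuinely a new computation rather than a relabelling of Lemma~\ref{lemma:main11XYZ}: because $P$ and $Q$ occupy different positions inside $\{1,\ldots,10\}$, the exchange redistributes the various contributions among the residue classes modulo $11$, so the recurrences cannot be obtained by the naive substitution $X\leftrightarrow U,\,Y\leftrightarrow V,\,Z\leftrightarrow W$ and must be recomputed from scratch. Whereas the $J$-side already required $2274558$ types, the exchanged $K$-side involves $2350964$ types, far beyond hand verification; the enumeration is therefore delegated to the program {\tt Apwen.py}. What legitimises its output as a proof is the correctness---exhaustiveness and disjointness of the type/atom decomposition together with the validity of the parity tests---established once and for all in Sections~\ref{sec:Algorithm} and~\ref{sec:CountType}, independently of whether one counts with $J$ or with $K$. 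Thus the entire lemma reduces to a finite, if very large, proof by exhaustion carried out by machine.
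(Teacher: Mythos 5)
Your proposal is correct and follows essentially the same route as the paper: the paper likewise obtains Lemma \ref{lemma:main11UVW} by exchanging $P$ with $Q$ (hence $J$ with $K$) and re-running the type/atom machinery of Sections \ref{sec:CountType} and \ref{sec:Algorithm} via the program {\tt Apwen.py} over the resulting 2350964 types. Your observation that the recurrences are not the naive relabelling $X\leftrightarrow U$, $Y\leftrightarrow V$, $Z\leftrightarrow W$ of Lemma \ref{lemma:main11XYZ} and must be recomputed from scratch is exactly the paper's point as well.
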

From Lemmas \ref{lemma:main11XYZ} and \ref{lemma:main11UVW}
we obtain the following ``simplified'' recurrence relations based on some elementary calculations.
\begin{coroll}\label{cor:moresimp11} For each positive integer $n$ we have
\begin{align*}
T_{11n+0}&\equiv R_n, &                     R_{11n+0}&\equiv T_{n}, \\
	T_{11n+i}&\equiv R_n  W_{n+1}, &            R_{11n+i}&\equiv T_{n}Z_{n+1},&\quad (1\leq i\leq 5) \\
	T_{11n+i}&\equiv R_{n+1}  W_{n+1}, &        R_{11n+i}&\equiv T_{n+1}Z_{n+1},&\quad (6\leq i \leq 10) \\
Z_{11n+0}&\equiv R_{n}W_{n}, &              W_{11n+0}&\equiv T_nZ_n, \\
	Z_{11n+i}&\equiv R_{n}W_{n+1}, &            W_{11n+i}&\equiv T_nZ_{n+1}, &\quad (1\leq i \leq 5) \\
Z_{11n+6}&\equiv W_{n+1}, &                 W_{11n+6}&\equiv Z_{n+1}, \\
	Z_{11n+i}&\equiv R_{n+1}W_{n+1}, &          W_{11n+i}&\equiv T_{n+1}Z_{n+1}. &\quad (7\leq i \leq 10) 
\end{align*}
\end{coroll}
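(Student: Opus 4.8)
The plan is to derive Corollary~\ref{cor:moresimp11} purely algebraically from Lemmas~\ref{lemma:main11XYZ} and~\ref{lemma:main11UVW}, working throughout modulo~$2$ and using the abbreviations $T_n=X_n+X_nY_n+Y_n$ and $R_n=U_n+U_nV_n+V_n$ from Definition~\ref{def:jk}. The key observation is the identity $T_n\equiv(1+X_n)(1+Y_n)+1\pmod 2$, and similarly $R_n\equiv(1+U_n)(1+V_n)+1$; this factored form is what makes the bilinear products collapse. The six $Z$-recurrences and six $W$-recurrences in the target are already present verbatim in the two input lemmas, so the substantive content is establishing the twelve $T$- and $R$-recurrences.

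First I would treat the residue class $i=0$. From Lemma~\ref{lemma:main11XYZ} one has $X_{11n}\equiv U_n$ and $Y_{11n}\equiv U_n+V_n$, so
\begin{equation*}
T_{11n}\equiv X_{11n}+X_{11n}Y_{11n}+Y_{11n}\equiv U_n+U_n(U_n+V_n)+(U_n+V_n).
\end{equation*}
Since $U_n^2\equiv U_n\pmod 2$, the right-hand side simplifies to $U_n+U_nV_n+V_n\equiv R_n$, giving $T_{11n}\equiv R_n$; the companion relation $R_{11n}\equiv T_n$ follows identically from Lemma~\ref{lemma:main11UVW}. For the block $1\le i\le 5$ I would substitute the corresponding $X_{11n+i},Y_{11n+i}$ (each a product of $W_{n+1}$ with an expression in $U_n,V_n$) into $T$. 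The factor $W_{n+1}$ appears in both $X$ and $Y$, and because $W_{n+1}^2\equiv W_{n+1}$ one finds $T_{11n+i}\equiv W_{n+1}\cdot(\text{expression})$; the remaining work is checking that, for each of the five values $i=1,2,3,4,5$ separately, the bracketed expression reduces to $R_n$. The cleanest route is to verify $1+X_{11n+i}\equiv 1+W_{n+1}A_i$ and $1+Y_{11n+i}\equiv 1+W_{n+1}B_i$, multiply, and read off $T_{11n+i}=(1+X)(1+Y)+1$. The block $6\le i\le10$ is handled the same way but with the index $n$ advanced to $n+1$ inside the $U,V$ arguments, yielding $T_{11n+i}\equiv R_{n+1}W_{n+1}$.

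The analogous computation with the roles of $(X,Y,Z,T)$ and $(U,V,W,R)$ exchanged—reading $U_{11n+i},V_{11n+i}$ off Lemma~\ref{lemma:main11UVW} and forming $R_{11n+i}$—produces the right-hand column, with $Z_{n+1}$ playing the role that $W_{n+1}$ played above. I expect the main obstacle to be purely bookkeeping rather than conceptual: there are $2\times 11=22$ residue cases, and in several of them the naive expansion of $T_{11n+i}$ produces cross terms such as $W_{n+1}^2U_nV_n$ and $W_{n+1}(U_n+V_n)$ that only cancel after repeatedly invoking idempotence ($a^2\equiv a$) modulo~$2$; one must be careful to track which of $U_n,V_n$ versus $U_{n+1},V_{n+1}$ occurs in each class, since a single mislabeled subscript invalidates the induction that Corollary~\ref{cor:moresimp11} is designed to feed. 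Since every case is a finite modulo-$2$ polynomial identity in the six symbols $X_n,Y_n,Z_n,U_n,V_n,W_n$ (and their $n+1$ shifts), each verification is mechanical and, as the paper notes, was in fact carried out by the program \texttt{Apwen.py}; I would simply record the representative computation for one $i$ in each of the three blocks and assert that the others are entirely parallel.
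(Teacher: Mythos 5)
Your proposal is correct and matches the paper's own route: the corollary is obtained from Lemmas~\ref{lemma:main11XYZ} and~\ref{lemma:main11UVW} by substituting into $T_n=X_n+X_nY_n+Y_n$ and $R_n=U_n+U_nV_n+V_n$ and simplifying modulo $2$ using idempotence (the paper compresses this into ``some elementary calculations''). The only quibble is attribution: \texttt{Apwen.py} proves the two input lemmas, while the passage to the corollary is the hand computation you describe.
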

The first values of $Z_m, T_m, W_m, R_m$ are reproduced in the following table.
{\small
\begin{equation*}
\begin{array}{|c|rrrrrrrrrr| }
\hline
m& 1 & 2 & 3 & 4 & 5 & 6 & 7 & 8 & 9 & 10\\
\hline
Z_m& 1 & 1 & 3 & 11 & 13 & 25 & 39 & 117 & 739 &4431\\
T_m& 3 & 5 & 47&237&487&419&3503 & 66905 & 3527039&82080975\\
W_m& 1 & 1 & 1 & 1 & 5 & 25 & 177 & 1091 & 3839& 19791\\
R_m& 1 & 5 & 1 & 11 & 107 &\!\! 5151 &\!\! 198769 &\!\! 4802755 &\!\! 56576127&\!\! 2717644635\\
\hline
\end{array}
\end{equation*}
}	

Corollary \ref{cor:moresimp11} yields 
	$Z_{m}\equiv T_m \equiv W_m \equiv R_m \equiv 1 \pmod 2$ for every positive integer $m$
by induction.  Hence, $F_{11}(x)$ is Apwenian.
% >>>

\section{Algorithm for finding the recurrences}\label{sec:CountType}%<<<
Keep the same {\it notations} as in Section \ref{sec:proof}.
We will show how  to {\it find} and also {\it prove} 
a list of recurrence relations between the quantities $X_n, Y_n, Z_n, U_n, V_n, W_n$. 
The set $N$ of nonnegative integers is partitioned into $d$ disjoint subsets 
$A_0, A_1, \ldots, A_{d-1}$
according to the value  modulo $d$:
\begin{equation}\label{equ:setA}
		A_{i}=\{dn+i\ |\ n\in N\}\quad (i=0,1,\ldots,d-1).
\end{equation}
For an infinite set $S$ let $S|_{m}$ be the set composed of the $m$ smallest integers in $S$.
Let $\beta:N\rightarrow N$ denote the transformation 
$k\mapsto \lfloor \frac kd \rfloor$. In other words,
\begin{equation}\label{equ:beta}
	\beta(k)=(k-i)/d \qquad\text{if $k\in A_{i}$.} 
\end{equation}
For simplicity, write
$$
\bar{J}=
\begin{cases}
	J & \text{if\ } v_{d-1}=1,\\
	K & \text{if\ } v_{d-1}=-1,
\end{cases}
\quad\text{and}\quad
\bar{K}=
\begin{cases}
	K & \text{if\ } v_{d-1}=1,\\
	J & \text{if\ } v_{d-1}=-1.
\end{cases}
$$
Then $\bar{\SymJ}_{m,\ell},\bar{X}_{n},\bar{Y}_{n},\bar{Z}_{n}$ mean $\SymJ_{m,\ell},X_{n},Y_{n},Z_{n}$ (resp. $\SymK_{m,\ell},U_{n},V_{n},W_{n}$) if $v_{d-1}=1$ (resp. $v_{d-1}=-1$).
\begin{lemma}\label{lemma:set}
For each $p\in P$ and $q\in Q$ we have
\begin{itemize}
\item[(i)] $A_{p-1}\subset J$ and $A_{q-1}\subset K$;
\item[(ii)]$A_{q-1}\cap J=\emptyset$ and $A_{p-1}\cap K=\emptyset$;
\item[(iii)]$\beta(A_{d-1}\cap J)= \bar{J}$ and $\beta(A_{d-1}\cap K)= \bar{K}$. 
\end{itemize}
\end{lemma}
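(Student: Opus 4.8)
The whole statement will be read off from the single-step differences $\delta_t=|(f_t-f_{t+1})/2|$ of Lemma~\ref{lemma:f=set}. Recall that $\delta_t\in\{0,1\}$ since ${\bf f}$ is a $\pm1$-sequence, that $J=\{t\in N:\delta_t=1\}$ by Lemma~\ref{lemma:f=set}, and that $J$ and $K$ partition $N$ (immediate from the unique $d$-adic representation $t+1=(dn+\ell)d^{k}$ with $d\nmid dn+\ell$), so that $K=\{t\in N:\delta_t=0\}$. The only tool needed is the recurrence \eqref{equ:f:recurrence}, applied according to whether the consecutive integers $t$ and $t+1$ fall in the same length-$d$ block $\{dn,\dots,dn+d-1\}$ or straddle two blocks.

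\textbf{Parts (i) and (ii).} Fix $i$ with $0\le i\le d-2$ and let $t=dn+i\in A_i$. Then both $t$ and $t+1=dn+(i+1)$ lie in the block indexed by $n$, so \eqref{equ:f:recurrence} gives $f_t=v_i f_n$ and $f_{t+1}=v_{i+1}f_n$; hence, using $|f_n|=1$, $\delta_t=\tfrac12|v_i-v_{i+1}|$, which equals $1$ when $v_i\ne v_{i+1}$ and $0$ otherwise, independently of $n$. Taking $i=p-1$ with $p\in P$ gives $v_{p-1}\ne v_{p}$, so $\delta_t=1$ for every $t\in A_{p-1}$, i.e.\ $A_{p-1}\subseteq J$; taking $i=q-1$ with $q\in Q$ gives $v_{q-1}=v_{q}$, so $\delta_t=0$ for every $t\in A_{q-1}$, i.e.\ $A_{q-1}\subseteq K$. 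Since $J$ and $K$ are complementary in $N$, these two inclusions are exactly (i), and their complementary forms $A_{p-1}\cap K=\emptyset$ and $A_{q-1}\cap J=\emptyset$ are (ii).

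\textbf{Part (iii).} The remaining residue is $i=d-1$. For $t=dn+(d-1)\in A_{d-1}$ we have $t+1=d(n+1)$, which starts the next block, so \eqref{equ:f:recurrence} gives $f_{t+1}=v_0 f_{n+1}=f_{n+1}$ while $f_t=v_{d-1}f_n$; therefore $\delta_t=\tfrac12|v_{d-1}f_n-f_{n+1}|$. I would now split on $v_{d-1}$. If $v_{d-1}=1$ then $\delta_t=\tfrac12|f_n-f_{n+1}|=\delta_n$, so $t\in J\iff n\in J$ and $t\in K\iff n\in K$. If $v_{d-1}=-1$ then $\delta_t=\tfrac12|f_n+f_{n+1}|=1-\delta_n$, because exactly one of $\tfrac12|f_n-f_{n+1}|$ and $\tfrac12|f_n+f_{n+1}|$ equals $1$ for $f_n,f_{n+1}\in\{\pm1\}$; hence $t\in J\iff n\in K$ and $t\in K\iff n\in J$. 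In both cases, by the very definition of $\bar J$ and $\bar K$, we get $t\in J\iff n\in\bar J$ and $t\in K\iff n\in\bar K$. Finally $\beta$ sends $t=dn+(d-1)$ to $n$ by \eqref{equ:beta} and restricts to a bijection $A_{d-1}\to N$; applying it to the equivalence just proved yields $\beta(A_{d-1}\cap J)=\{n:dn+(d-1)\in J\}=\bar J$ and likewise $\beta(A_{d-1}\cap K)=\bar K$, which is (iii).

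\textbf{Main obstacle.} Parts (i) and (ii) are a one-line block computation; the real content is the membership flip in (iii) when $v_{d-1}=-1$. The crux is the elementary but essential observation that for a $\pm1$ pair exactly one of $\tfrac12|f_n\pm f_{n+1}|$ equals $1$, so that crossing a block boundary under a $-1$ leading sign interchanges $J$ and $K$ --- which is precisely the phenomenon the notation $\bar J,\bar K$ is set up to record. Keeping the two cases $v_{d-1}=\pm1$ aligned through this interchange is the only place where care is required.
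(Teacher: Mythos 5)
Your proof is correct, but it takes a genuinely different route from the paper's. The paper argues directly from Definition~\ref{def:JK}: for (i) it observes that $A_{p-1}=\{(dn+p)d^{0}-1\}$ sits inside the defining union for $J$ (and similarly for $K$), (ii) then follows from $J\cap K=\emptyset$, and (iii) is obtained by writing out $A_{d-1}\cap J$ explicitly as a union of sets $\{(dn+p)d^{2k+2}-1\}\cup\{(dn+q)d^{2k+1}-1\}$ and applying $\beta$ termwise to see the exponent shift that turns $J$ into $\bar J$. You instead route everything through Lemma~\ref{lemma:f=set}, characterizing $J$ as $\{t:\delta_t=1\}$ and $K$ as its complement, and then compute $\delta_{dn+i}$ from the recurrence \eqref{equ:f:recurrence}: within a block you get $\delta_{dn+i}=\tfrac12|v_i-v_{i+1}|$, which gives (i) and (ii) at once, and across a block boundary you get $\delta_{dn+d-1}=\tfrac12|v_{d-1}f_n-f_{n+1}|$, so the sign of $v_{d-1}$ either preserves or flips membership, which is exactly the content of the $\bar J,\bar K$ notation. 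Your approach buys a uniform treatment of the two cases $v_{d-1}=\pm1$ in (iii) and avoids manipulating the parameterized unions, at the cost of leaning on Lemma~\ref{lemma:f=set} (which the paper proves beforehand, so there is no circularity); the paper's version is self-contained from the set definitions and makes the $d$-adic exponent bookkeeping explicit, which is arguably closer in spirit to how $J$ and $K$ are used later. Both arguments also quietly use that $J$ and $K$ partition $N$ (you for the complement characterization of $K$, the paper for part (ii)); you justify this via the uniqueness of the representation $t+1=(dn+\ell)d^k$ with $d\nmid dn+\ell$, which is fine.
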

\begin{proof}
(i) By the definition of the sets $J$ and $K$ we have
\begin{align*}
	J&\supset \{(dn+p) d^{2k} -1 \mid n,k\in N\} \supset \{(dn+p) d^{0} -1 \mid n\in N\}=A_{p-1};\\
	K&\supset \{(dn+q) d^{2k} -1 \mid n,k\in N\} \supset \{(dn+q) d^{0} -1 \mid n\in N\}=A_{q-1}.
\end{align*}

(ii) By (i) and the relation $K\cap J=\emptyset$.

(iii) By Definition \ref{def:JK} with $v_{d-1}=-1$, we have
\begin{align*}
	A_{d-1}\cap J&=\{(dn+p)d^{2k+2}-1\ |\ n,k\in N, p\in P\}\\
	 &\qquad\bigcup\{(dn+q)d^{2k+1}-1\ |\ n,k\in N, q\in Q\}.\\
	\noalign{Thus,}
	\beta(A_{d-1}\cap J)&=\{(dn+p)d^{2k+1}-1\ |\ n,k\in N, p\in P\}\\
	 &\qquad\bigcup\{(dn+q)d^{2k}-1\ |\ n,k\in N, q\in Q\}.\\
	 &=K=\bar{J}.
\end{align*}

 If $v_{d-1}=1$, then
$A_{d-1}\cap J=\{(dn+p)d^{k+1}-1\ |\ n,k\in N, p\in P\}$.
Thus,
$\beta(A_{d-1}\cap J)=\{(dn+p)d^{k}-1\ |\ n,k\in N, p\in P\}=J=\bar{J}$.
The second part $\beta(A_{d-1}\cap K)=\bar{K}$ is proved in the same manner.
\end{proof}

Let $0\leq i,j\leq d-1$ and $x\in A_i, y\in A_j$. 
For determining the condition of $i$ and $j$ such that the sum $x+y$ belongs to  $J$ or $K$, there are three cases to be considered.

(S1) If $i+j+1 \pmod d \in P$, then, $x+y\in J$;

(S2) If $i+j+1 \pmod d \in Q$, then, $x+y\in K$;

(S3) If $i+j+1 \pmod d=0$, then, $x+y\in A_{d-1}$. In this case, the sum $x+y$ may belong to $J$ or $K$.

\medskip

Let $m\geq \ell \geq 0$.
We want to enumerate the permutations in $\SymJ_{m,\ell}$ modulo~2. 
Each permutation
$\sigma=\sigma_{0}\sigma_{1}\cdots \sigma_{m-1}\in\Sym_m$ may be written as the
{\it two-line representation}
$$
	\begin{pmatrix}
	 0  		&  1 &  2 		&  \cdots  		  		&  m-1  		\\
		\sigma_{0} &  \sigma_{1} &\sigma_{2}			& \cdots 	 		& \sigma_{m-1} 
	\end{pmatrix}.
$$
The columns $\binom{i}{\sigma_{i}}$ are called {\it biletters}.
For each $\sigma\in \SymJ_{m,\ell}$ a biletter $\binom{i}{\sigma_{i}}$ in $\sigma$ 
is said to be of {\it $($normal$)$ form} $\binom{a_j}{a_k}$ 
(resp. {\it specific form} $\binom{\ell}{a_k}$ )
if $i\neq \ell$ and
$(i, \sigma_{i})\in A_j\times A_k$ 
(resp. $i=\ell$ and $\sigma_{i}\in A_{k}$).
To count the permutations from $\SymJ_{m,\ell}$ modulo 2, we proceed in several 
steps.  In most cases the calculations are illustrated with $d=5$.

\medskip

{\it Step 1. Occurrences of biletters.} Since we want to enumerate permutations modulo 2, we can delete suitable pairs of the permutations and the result will not be changed. Let $i\in N|_d$, if a permutation $\sigma\in \SymJ_{m,\ell}$ contains more than two biletters of form 
$\binom{a_i}{a_j}$ such that $i+j+1\pmod d \in P$,
select the first two such biletters $\binom{i_1}{j_1}$ and $\binom{i_2}{j_2}$. We define another permutation $\tau$ obtained from $\sigma$ by exchanging $j_1$ and $j_2$ in the bottom line. This procedure is reversible. 
By (S1), 
it is easy to verify that $\tau$ is also in $\mathfrak{J}_{m,\ell}$, so that we can delete the pair $\sigma$ and $\tau$. Then, there only remain the permutations containing 0 or 1 biletter of form $\binom{a_i}{a_j}$  such that 
$i+j+1\pmod d \in P$.

Let $\mathfrak{J}'_{m,\ell}$ be the set of permutations 
$\sigma\in \mathfrak{J}_{m,\ell}$ which, 
for each $i\in N|_d$,
contains 0 or 1 biletter of form $\binom{a_i}{a_j}$ such that $i+j+1\pmod d \in P$. 
We have $j_{m,\ell}=\#\mathfrak{J}_{m,\ell}\equiv \#\mathfrak{J}'_{m,\ell} \pmod 2$.
By (S2), each permutation $\sigma\in\mathfrak{J}'_{m,\ell}$ does not contain any biletter of form $\binom{a_i}{a_j}$ such that $i+j+1\pmod d \in Q$. 
Thus, most of the biletters are of form $\binom{a_i}{a_{d-i-1}}$. 
In conclusion, the number of occurrences of each form is summarized in 
%Table \ref{tab:Occur}.
Table 3.1.
A biletter of form
$\binom{a_i}{a_j}$ such that $i+j+1\pmod d \in P$
is said to be {\it unsociable}.
A biletter of form
$\binom{a_j}{a_{d-j-1}}$ 
is said to be {\it friendly}.
%	$\binom{\ell}{a_j}$  when $0\leq \ell\leq m-1$ is said to be {\it ??}.
By Table~3.1, each permutation in $\SymJ_{m,\ell}'$ contains only a few 
unsociable biletters.
\begin{table}[!ht]\label{tab:Occur}
$$
\begin{array}{ l | l}
	\hfil\text{form}   &  \hfil\text{total times} \\
	\hline 
	\\ [-1.1em]
	\{\binom{a_k}{a_j} \mid k+j+1\pmod d \in Q\} & 0 \\[0.4em]
	\{\binom{a_i}{a_j} \mid i+j+1\pmod d \in P\} \text{ for each } i\in N|_d & 0,  1 \\[0.4em]
	\{\binom{a_j}{a_{d-j-1}} \mid j\in N|_d\} & 0, 1, 2, 3, \ldots\\[0.3em]
  \hline \\[-1.1em]
	\{\binom{\ell}{a_j} \mid j \in N|_d\}\quad (\ell=m)& 0\\[0.3em]
	\{\binom{\ell}{a_j} \mid j \in N|_d\}\quad (0\leq \ell\leq m-1)& 1\\[0.3em]
	\hline
\end{array}
$$
\caption{Number of occurrences of biletters}
\end{table}
{\it Step 2. Form and type.} The two-line representation of a permutation can be seen as a word of biletters. In fact, the order of the biletters does not matter. 
Let $m\geq 2d$.
The {\it form} $f(\sigma)$ of a permutation $\sigma\in\mathfrak{J}'_{m,\ell}$  is obtained from $\sigma$ by 
replacing each biletter of $\sigma$ by its (normal or specific) form.
From Table 3.1, the form $f(\sigma)$ of a permutation 
$\sigma\in \SymJ'_{m,\ell}$ is
\begin{equation}\label{equ:s0s1s2}
\left(
\begin{array}{c  |c |c|c }
	\ccch{a_0}  \ccch{a_0}  \cch{a_0} & \ccch{a_1}  \ccch{a_1}  \cch{a_1} &\ldots & \ccch{a_{d-1}}  \ccch{a_{d-1}}  \ccch{a_{d-1}}\\
	\ccch{a_{d-1}}  \ccch{a_{d-1}}  \cch{s_0} & \ccch{a_{d-2}}  \ccch{a_{d-2}} \cch{s_1} &\ldots & \ccch{a_{0}} \ccch{a_0} \ccch{s_{d-1}}
\end{array}
\right),
\end{equation}
for $\ell=m$, 
or
\begin{equation}\label{equ:s0s1s2l}
\left(
\begin{array}{c  |c |c|c |c}
	\ccch{a_0}  \ccch{a_0}  \cch{a_0} & \ccch{a_1}  \ccch{a_1}  \cch{a_1} &\ldots & \ccch{a_{d-1}}  \ccch{a_{d-1}}  \ccch{a_{d-1}}&\ell\\
	\ccch{a_{d-1}}  \ccch{a_{d-1}}  \cch{s_0} & \ccch{a_{d-2}}  \ccch{a_{d-2}} \cch{s_1} &\ldots & \ccch{a_{0}} \ccch{a_0} \ccch{s_{d-1}}&s_{d}
\end{array}
\right),
\end{equation}
for $0\leq \ell\leq  m-1$,
where 
\begin{equation}\label{cond:type}
\begin{cases}
s_i &\in \{a_j\mid i+j+1\pmod d\in P\cup\{0\}\}, \quad (i\in N|_d)  \\
s_{d}&\in \{a_0,a_1,\ldots,a_{d-1}\}.
\end{cases}
\end{equation}
Consequently, it
can be characterized by 
a word $t(\sigma)=s_0s_1\ldots s_{d-1}$ 
or $s_0s_1\ldots s_{d-1}s_{d}$,
of length $d$ or $d+1$ respectively.
The word $t(\sigma)$ is called the {\it type} of the permutation $\sigma$.
We classify the permutations from the set $\SymJ'_{m,\ell}$ according to 
the type $t=s_0s_1\ldots s_{d-1}$ (resp. $t=s_0s_1\ldots s_{d-1}s_d$) by defining
\begin{equation}\label{def:Jtype}
\SymJ_{m,\ell}^{t}=\{\sigma\in \SymJ'_{m,\ell}\ |\ t(\sigma)=t\}.
\end{equation}
Hence,
\begin{equation}\label{equ:sumJtype}
j_{m,\ell} \equiv \sum_{t} \#\mathfrak{J}^t_{m,\ell} \pmod2.
\end{equation}

Some types do not have any contribution for counting the permutations modulo 2, as stated in the following two lemmas.

\begin{lemma}\label{lemma:eq0}
	Let $\ell=m$ and $t=s_0s_1s_2\ldots s_{d-1}$ $($resp.
	$\ell\in N|_{m}$ and $t=s_0s_1s_2\ldots s_{d-1}s_d$ $)$. 
	If there are $0\le i<j\le d-1$ $($resp. $0\le i<j\le d$ $)$, 
	such that $s_i=s_j$, $s_i\neq a_{d-i-1}$ and $s_j\neq a_{d-j-1}$, then 
\begin{equation}\label{equ:eq0}
\#\SymJ_{m,\ell}^{t}\equiv 0 \pmod2.
\end{equation}
\end{lemma}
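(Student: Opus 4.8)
The plan is to prove $\#\SymJ_{m,\ell}^{t}\equiv 0\pmod 2$ by exhibiting a fixed-point-free involution on $\SymJ_{m,\ell}^{t}$; pairing the permutations into two-element orbits immediately forces the cardinality to be even. Fix a type $t$ meeting the hypothesis and write $s_i=s_j=a_k$. The structural input I would use is the blockwise description coming from Step~1 and Table~3.1: every $\sigma\in\SymJ_{m,\ell}^{t}\subset\SymJ'_{m,\ell}$ consists, within the block of biletters whose top lies in $A_i$, of several friendly biletters $\binom{a_i}{a_{d-i-1}}$ together with at most one unsociable biletter whose form is recorded by $s_i$, and similarly for every other block (plus the single specific biletter $\binom{\ell}{a_{s_d}}$ when $0\le\ell\le m-1$).

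First I would use the hypothesis to isolate two distinguished biletters. Since $s_i=a_k\neq a_{d-i-1}$, the unsociable biletter of the $A_i$-block is the \emph{unique} biletter of $\sigma$ with top in $A_i$ and bottom in $A_k$; call it $\binom{x}{y}$. In exactly the same way $s_j\neq a_{d-j-1}$ singles out a unique biletter $\binom{x'}{y'}$ with top in $A_j$ and bottom in $A_k$, where for $j=d$ (the case $\ell\le m-1$) this is simply the specific biletter $\binom{\ell}{y'}$, which is unique by construction. I then define $\phi(\sigma)$ to be the permutation obtained from $\sigma$ by exchanging the two bottom entries $y$ and $y'$, leaving all other biletters untouched.

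Next I would verify that $\phi$ preserves $\SymJ_{m,\ell}^{t}$. Only the images of $x$ and $x'$ are altered, and because $y,y'\in A_k$ the two modified biletters keep the forms $\binom{a_i}{a_k}$ and $\binom{a_j}{a_k}$; hence $f(\phi(\sigma))=f(\sigma)$ and $t(\phi(\sigma))=t$. For membership in $\SymJ_{m,\ell}$ I appeal to case (S1): whether $x+y\in J$ depends only on the residues of the top and bottom, and since $x+y'$ and $x+y$ (resp.\ $x'+y$ and $x'+y'$) have the same residue modulo $d$, both new biletters still satisfy the defining condition (and when one of the positions equals $\ell$ there is no condition to check). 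Finally $\phi$ is visibly an involution and is fixed-point-free: as $\sigma$ is a bijection and $x\neq x'$, the values $y$ and $y'$ are distinct, so exchanging them genuinely changes $\sigma$. This partitions $\SymJ_{m,\ell}^{t}$ into orbits of size $2$ and gives the claim.

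The hard part will be the well-definedness of $\phi$, i.e.\ the assertion that $s_i\neq a_{d-i-1}$ (resp.\ $s_j\neq a_{d-j-1}$) really does pin down a \emph{single} biletter in each block. The danger is that the targeted bottom class $A_k$ could be the friendly class of that block and thus be shared by its many friendly biletters, destroying uniqueness; ruling this out is exactly the content of the inequalities $s_i\neq a_{d-i-1}$ and $s_j\neq a_{d-j-1}$, which for the boundary index $j=d$ read $s_d\neq a_{d-1}$. I would therefore treat with care the special position $j=d$, where the distinguished biletter is identified by its top $\ell$ rather than by a residue class, and confirm that the swap there preserves the type and the (absent) constraint at $\ell$. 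Once this uniqueness is secured, type-preservation and fixed-point-freeness are routine.
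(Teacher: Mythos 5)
Your proof is correct and follows essentially the same route as the paper's: both arguments pair off the permutations of $\SymJ_{m,\ell}^{t}$ via the involution that swaps the bottom entries of the two biletters recorded by $s_i$ and $s_j$, using (S1) (equivalently Lemma~\ref{lemma:set}(i)) to check that membership and type are preserved. You merely spell out two points the paper leaves implicit — that $s_i\neq a_{d-i-1}$ and $s_j\neq a_{d-j-1}$ make the two distinguished biletters unique so the swap is well defined, and that injectivity of $\sigma$ makes the involution fixed-point-free — both of which are accurate.
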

\begin{proof}
If $\SymJ_{m,\ell}^{t}=\emptyset$, then \eqref{equ:eq0} holds. 
Otherwise, each permutation $\sigma\in \SymJ_{m,\ell}^t$
has two biletters $\binom{i_1}{i_2}$ and $\binom{j_1}{j_2}$ of forms $\binom{a_i}{a_k}$ and $\binom{a_j}{a_k}$, respectively, where $a_k=s_i=s_j$. 
We define another permutation $\tau$ obtains from $\sigma$ by exchanging $i_2$ and $j_2$ in the bottom line. 
This procedure is reversible. By Lemma \ref{lemma:set}(i) or Table 3.1, it is easy to verify that $\tau$ is also in $\SymJ_{m,\ell}^{t}$. 
Thus, the transformation $\sigma\leftrightarrow \tau$ is an {\it involution} on $\mathfrak{J}_{m,\ell}^{t}$. 
Hence, $\#\SymJ_{m,\ell}^{t}\equiv 0\pmod2$.
\end{proof}
\begin{lemma}\label{lemma:eqsum0}
Let $\ell\in N|_m$ and $t=s_0s_1s_2\ldots s_d$. If there is $i\in N|_m$ such that $s_i\neq a_{d-i-1}$, then
\begin{equation}\label{equ:eqsum0}
\sum_{\ell\in N|_{m}\cap A_i}\#\SymJ_{m,\ell}^{t}\equiv 0 \pmod2.
\end{equation}
\end{lemma}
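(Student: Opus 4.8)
The plan is to extend the involution of Lemma~\ref{lemma:eq0}, but this time letting the distinguished (unconstrained) position move within a single block. Fix the residue $i$ furnished by the hypothesis, so that $s_i\neq a_{d-i-1}$; by condition \eqref{cond:type} this forces $i+j+1\pmod d\in P$ for the form $s_i=a_j$, i.e.\ $s_i$ is an \emph{unsociable} form. By Step~1 and Table~3.1, for every free position $\ell\in A_i$ and every $\sigma\in\SymJ_{m,\ell}^t$ there is exactly one unsociable biletter whose top lies in the block $A_i$, say $\binom{c}{\sigma(c)}$; since the free position is never itself counted as an unsociable biletter, we always have $c\neq\ell$. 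I would then work on the disjoint union $\mathcal S:=\bigsqcup_{\ell\in N|_m\cap A_i}\SymJ_{m,\ell}^t$ and define a map on it by: given $(\ell,\sigma)\in\mathcal S$, locate this position $c$, exchange the two bottom entries $\sigma(\ell)$ and $\sigma(c)$ to obtain $\tau$, and transfer the free label from $\ell$ to $c$, producing the pair $(c,\tau)$.

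The key step is to check that $(c,\tau)\in\mathcal S$, i.e.\ $\tau\in\SymJ_{m,c}^t$. For $x\notin\{\ell,c\}$ nothing changes, so $x+\tau(x)=x+\sigma(x)\in J$; for $x=c$ there is nothing to verify since $c$ is now the free index. The only genuine point is $x=\ell$, where I must show $\ell+\sigma(c)\in J$. Here I would invoke the fact, recorded in cases (S1)--(S3), that membership of a sum in $J$, in $K$, or in $A_{d-1}$ depends only on the residues of the two summands modulo $d$. Because $\binom{c}{\sigma(c)}$ is unsociable we are in case (S1), so $c+\sigma(c)\in J$; and since $\ell\equiv c\pmod d$, the pair $(\ell,\sigma(c))$ falls under the very same case (S1), whence $\ell+\sigma(c)\in J$ as well. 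This is exactly the place where $\ell$ and $c$ must share the residue $i$, which is why the statement sums over an entire class $A_i$ rather than fixing a single $\ell$ as in Lemma~\ref{lemma:eq0}.

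Finally I would verify that the type is preserved and that the construction is a fixed-point-free involution. Preservation of the type is immediate because both swapped biletters have top in $A_i$: after the swap $\binom{\ell}{\sigma(c)}$ is the unsociable biletter of block $i$ (still of form $s_i$) and $\binom{c}{\sigma(\ell)}$ is the free biletter (still of bottom form $s_d$), every other biletter being untouched, so $t(\tau)=t$. Applying the same rule to $(c,\tau)$ swaps the bottoms back and returns the free label to $\ell$, recovering $(\ell,\sigma)$, so the map is an involution, and it is fixed-point-free since $c\neq\ell$ always. Hence $\#\mathcal S$ is even, which is precisely \eqref{equ:eqsum0}. I expect the main obstacle to be the verification at $x=\ell$: one must argue carefully that the residue-only criterion (S1) really does guarantee $\ell+\sigma(c)\in J$, so that moving the unconstrained index within its block neither breaks a $J$-constraint nor alters the type.
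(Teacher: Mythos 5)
Your proposal is correct and follows essentially the same route as the paper: you construct the same fixed-point-free involution on $\bigsqcup_{\ell\in N|_m\cap A_i}\SymJ_{m,\ell}^{t}$ by swapping the bottom entries of the unique unsociable biletter of block $A_i$ and the specific biletter at $\ell$, while transferring the free label. Your justification of the remaining $J$-constraint at $x=\ell$ via (S1) is exactly the paper's appeal to Lemma~\ref{lemma:set}(i), and your explicit checks of type preservation and reversibility simply spell out what the paper leaves implicit.
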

\begin{proof}
For any $\ell\in N|_m\cap A_i$, each permutation  
$\sigma\in\SymJ_{m,\ell}^{t}$ 
contains two biletters $\binom{i_1}{i_2}$ and $\binom{\ell}{\sigma_{\ell}}$ of forms $\binom{a_i}{a_j}$ and $\binom{\ell}{a_k}$, respectively. 
We define another permutation $\tau$ by exchanging $i_2$ and $\sigma_{\ell}$ in the bottom line. This procedure is reversible. 
By Lemma \ref{lemma:set}(i) it is easy to verify that $\tau\in \SymJ_{m,\ell'}^{t}$, where $\ell'=i_1\in  N|_{m} \cap A_i$. 
Thus, the transformation $\sigma\leftrightarrow \tau$ is an {\it involution} on $\sum_{\ell\in N|_{m}\cap A_{i}}\mathfrak{J}_{m,\ell}^{t}$. 
Hence,  \eqref{equ:eqsum0} holds.
%% more detail todo.
\end{proof}
\medskip

Let $m=dn+h$ $(n\geq 2, \, h\in N|_d)$, $k\in N|_d$ and
$$\SymP_Y:=\SymJ_{m,m}^{t}, \quad
\SymP_Z:=\SymJ_{m,m-1}^{t}, \quad
\SymP_X:=\sum_{\ell\in N|_m\cap A_k}\SymJ_{m,\ell}^{t}.$$
The recurrence relations listed in Lemmas  
\ref{lemma:main3XYZ},
\ref{lemma:main3UVW},
\ref{lemma:main5},
\ref{lemma:main11XYZ},
\ref{lemma:main11UVW}
can be generated by Algorithm 1.
The procedure {\tt EvalAtoms(P,t,h,k)} appearing in Algorithm 1
evaluates
the cardinality of 
the set $\SymP:=\SymP_Y$, $\SymP_Z$ or $\SymP_X$ 
for each type $t$, and will be discussed in Section \ref{sec:Algorithm} (see Algorithm~2).

\begin{remark}
	By Step~2 the recurrence relations generated by Algorithms~1 and 2 are valid for $n\geq 2$. However, we can certify that they are also true for $n=1$ by using the method described in Sections \ref{sec:CountType} and \ref{sec:Algorithm}.

\end{remark}

\begin{program}[htp]\label{algo:rec}
\begin{verbatim}
for P in ['PX', 'PY', 'PZ']:
  for h in range(d):
    Val=0
    for k in range(d) if P=='PX' else range(1):
      for t in PossibleTypes(P,h,k):
        Val=Val+EvalAtoms(P,t,h,k)
    print P,h,k,Val
\end{verbatim}
\caption{Finding the recurrences}
\end{program}

\medskip

{\it Step 3. Counting permutations}. 
Throughout this step we fix $m=dn+h$ ($h\in N|_d$). 
Counting permutations from $\SymJ_{m,\ell}^{t}$ is lengthy;
it is made in several  substeps. 
We illustrate the entire calculations
by means of four well-selected examples,
using some {\it compressed and intuitive notations}. 
Then, we explain what those compressed notations mean in full detail.
The examples are given for $d=5$. We write $A,B,C,D,E$ instead of $A_0, A_1, A_2, A_3, A_4$ and $a,b,c,d,e$ instead of 
$a_0, a_1, a_2, a_3, a_4$, respectively.

\smallskip
\begin{example}\label{example1}
Consider $m=\ell=5n+1$ and the type $\String{adbca}$ which satisfies condition \eqref{cond:type}. We have
\begin{align*}
\mathfrak{J}_{5n+1,5n+1}^{adbca}
&\overset{w}{=}
  \left(
	\block{ \ch0\ch{\tilde5}\cch{1\spred0}\cch{1\spred5}}{\ch{e}\ch{a}\cch{e}\cch{e}}
	\bigg|
	\block{ \ch{1}\ch6\cch{1\spred1}}{\ch{d}\ch{d}\cch{d}}
	\bigg|
	\block{ \ch2\ch{\tilde7}\cch{1\spred2}}{\ch{c}\ch{b}\cch{c}}
	\bigg|
	\block{ \ch{\tilde3}\ch{8}\cch{1\spred3}}{\ch{c}\ch{b}\cch{b}}
	\bigg|
	\block{ \ch4\ch{9}\cch{1\spred4}}{\ch{a}\ch{a}\cch{a}}
 	\right)\\
&\overset{a}{=}
  \left(
	\block{ \ch0\ch{\tilde5}\cch{1\spred0}\cch{1\spred5}}{\ch{e}\ch{a}\cch{e}\cch{e}}
	\bigg|
	\block{ \ch{1}\ch6\cch{1\spred1}}{\ch{d}\ch{d}\cch{d}}
	\bigg|
	\block{ \ch2\ch{\tilde7}\cch{1\spred2}}{\ch{c}\ch{b}\cch{c}}
	\bigg|
	\block{ \ch{\tilde3}\ch{8}\cch{1\spred3}}{\ch{c}\ch{b}\cch{b}}
	\bigg|
	\block{ \ch4\ch{9}\cch{1\spred4}\cch{1\spred9}}{\ch{a}\ch{a}\cch{a}\cch{\underline{1\spred9}}}
 	\right)\\		
&\overset{e}{=}
\left(
	\block{ \ch0\cch{\tilde5}\cch{1\spred0}\cch{1\spred5}}{\ch{e}\cch{\underline{1\spred 9}}\cch{e}\cch{e}}
	\bigg|
	\block{ \ch{1}\ch6\cch{1\spred1}}{\ch{d}\ch{d}\cch{d}}
	\bigg|
	\block{ \ch2\ch{\tilde7}\cch{1\spred2}}{\ch{c}\ch{\underline{c}}\cch{c}}
	\bigg|
	\block{ \ch{\tilde3}\ch{8}\cch{1\spred3}}{\ch{\underline{b}}\ch{b}\cch{b}}
	\bigg|
	\block{ \ch4\ch{9}\cch{1\spred4}\cch{1\spred9}}{\ch{a}\ch{a}\cch{a}\cch{\underline{a}}}
 	\right)\\
&\overset{d}{=}
	\left(\block{ \ch0\cch{\tilde5}\cch{1\spred0}\cch{1\spred5}}{\ch{e}\cch{\underline{1\spred 9}}\cch{e}\cch{e}}\right)
%\times
\left(\block{ \ch{1}\ch6\cch{1\spred1}}{\ch{d}\ch{d}\cch{d}}\right)
%\times
\left(\block{ \ch2\ch{\tilde7}\cch{1\spred2}}{\ch{c}\ch{\underline{c}}\cch{c}}\right)
%\times
\left(\block{ \ch{\tilde3}\ch{8}\cch{1\spred3}}{\ch{\underline{b}}\ch{b}\cch{b}}\right)
%\times
	\left(\block{\ch4\ch{9}\cch{1\spred4}\cch{1\spred9}}{\ch{a}\ch{a}\cch{a}\cch{\underline{a}}}\right)\\
	&\overset{b}{=} Z_{n+1} \times Y_n\times X_n\times X_n\times Z_{n+1}.
% &\quad (\text{by } G_{100}\times G_{001}\times G_{000}\times G_{000}\times G_{011})
\end{align*}
\end{example}

\begin{example}\label{example2}
Consider $m=5n+2$, $\ell=5n+1$ and the type $\String{dcbbaa}$ which satisfies condition \eqref{cond:type}. 
We have 
\begin{align*}
&\mathfrak{J}_{5n+2,5n+1}^{dcbbaa}\\
&\overset{w}{=}
\left(
	\block{ \ch0\ch{\tilde5}\cch{1\spred0}\cch{1\spred5}}{\ch{e}\ch{d}\cch{e}\cch{e}}
	\bigg|
	\block{ \ch{\tilde1}\ch6\cch{1\spred1}\cch{1\spred6}}{\ch{c}\ch{d}\cch{d}\cch{\underline{a}}}
	\bigg|
	\block{ \ch2\ch{\tilde7}\cch{1\spred2}}{\ch{c}\ch{b}\cch{c}}
	\bigg|
	\block{ \ch3\ch{8}\cch{1\spred3}}{\ch{b}\ch{b}\cch{b}}
	\bigg|
	\block{ \ch4\ch{9}\cch{1\spred4}}{\ch{a}\ch{a}\cch{a}}
 	\right)	\\
&\overset{a}{=}
\left(
	\block{ \ch0\ch{\tilde5}\cch{1\spred0}\cch{1\spred5}}{\ch{e}\ch{d}\cch{e}\cch{e}}
	\bigg|
	\block{ \ch{\tilde1}\ch6\cch{1\spred1}\cch{1\spred6}}{\ch{c}\ch{d}\cch{d}\cch{\underline{a}}}
	\bigg|
	\block{ \ch2\ch{\tilde7}\cch{1\spred2}}{\ch{c}\ch{b}\cch{c}}
	\bigg|
	\block{ \ch3\ch{8}\cch{1\spred3}\cch{1\spred8}}{\ch{b}\ch{b}\cch{b}\cch{\underline{1\spred8}}}
	\bigg|
	\block{ \ch4\ch{9}\cch{1\spred4}\cch{1\spred9}}{\ch{a}\ch{a}\cch{a}\cch{\underline{1\spred9}}}
 	\right)	\\
&\overset{e}{=}
   \left(
	\block{ \ch0\cch{\tilde5}\cch{1\spred0}\cch{1\spred5}}{\ch{e}\cch{\underline{1\spred9}}\cch{e}\cch{e}}
	\bigg|
	\block{ \ch{\tilde1}\ch6\cch{1\spred1}\cch{1\spred6}}{\ch{\underline{d}}\ch{d}\cch{d}\cch{\underline{1\spred8}}}
	\bigg|
	\block{ \ch2\ch{\tilde7}\cch{1\spred2}}{\ch{c}\ch{\underline{c}}\cch{c}}
	\bigg|
	\block{ \ch3\ch{8}\cch{1\spred3}\cch{1\spred8}}{\ch{b}\ch{b}\cch{b}\cch{\underline{b}}}
	\bigg|
	\block{ \ch4\ch{9}\cch{1\spred4}\cch{1\spred9}}{\ch{a}\ch{a}\cch{a}\cch{\underline{a}}}
 	\right)\\	
&\overset{d}{=}
\left(	\block{ \ch0\cch{\tilde5}\cch{1\spred0}\cch{1\spred5}}{\ch{e}\cch{\underline{1\spred9}}\cch{e}\cch{e}}\right)
%	\times
\left(	\block{ \ch{\tilde1}\ch6\cch{1\spred1}\cch{1\spred6}}{\ch{\underline{d}}\ch{d}\cch{d}\cch{\underline{1\spred8}}}\right)
% \times
\left(	\block{ \ch2\ch{\tilde7}\cch{1\spred2}}{\ch{c}\ch{\underline{c}}\cch{c}}\right)
% \times
\left(	\block{ \ch3\ch{8}\cch{1\spred3}\cch{1\spred8}}{\ch{b}\ch{b}\cch{b}\cch{\underline{b}}}\right)
%	\times
\left(	\block{ \ch4\ch{9}\cch{1\spred4}\cch{1\spred9}}{\ch{a}\ch{a}\cch{a}\cch{\underline{a}}}\right)\\
&\overset{b}{=}
Z_{n+1}\times X_n \times X_n \times Z_{n+1} \times Z_{n+1}.
% &\quad (\text{by } G_{100}\times Z_{1000}\times G_{000}\times G_{011}\times G_{011})
\end{align*}
\end{example}
\begin{example}\label{example3}
Consider $m=5n+4$, $\ell\in C|_{n+1}$ and the type $\String{adcbac}$ which satisfies condition \eqref{cond:type}. 
We have 
\begin{align*}
&\sum_{\ell\in C|_{n+1}}\mathfrak{J}_{5n+4,\ell}^{adcbac}\\
&\overset{w}{=}
  \left(
	\block{ \ch0\ch{\tilde5}\cch{1\spred0}\cch{1\spred5}}{\ch{e}\ch{a}\cch{e}\cch{e}}
	\bigg|
	\block{ \ch{1}\ch6\cch{1\spred1}\cch{1\spred6}}{\ch{d}\ch{d}\cch{d}\cch{d}}
	\bigg|
	\block{ \ch2\ch{\tilde7}\cch{1\spred2}\cch{1\spred7}}{\ch{c}\ch{\underline{c}}\cch{c}\cch{c}}
	\bigg|
	\block{ \ch3\ch{8}\cch{1\spred3}\cch{1\spred8}}{\ch{b}\ch{b}\cch{b}\cch{b}}
	\bigg|
	\block{ \ch4\ch{9}\cch{1\spred4}}{\ch{a}\ch{a}\cch{a}}
 	\right)\\
&\overset{a}{=}
  \left(
	\block{ \ch0\ch{\tilde5}\cch{1\spred0}\cch{1\spred5}}{\ch{e}\ch{a}\cch{e}\cch{e}}
	\bigg|
	\block{ \ch{1}\ch6\cch{1\spred1}\cch{1\spred6}}{\ch{d}\ch{d}\cch{d}\cch{d}}
	\bigg|
	\block{ \ch2\ch{\tilde7}\cch{1\spred2}\cch{1\spred7}}{\ch{c}\ch{\underline{c}}\cch{c}\cch{c}}
	\bigg|
	\block{ \ch3\ch{8}\cch{1\spred3}\cch{1\spred8}}{\ch{b}\ch{b}\cch{b}\cch{b}}
	\bigg|
	\block{ \ch4\ch{9}\cch{1\spred4}\cch{1\spred9}}{\ch{a}\ch{a}\cch{a}\cch{\underline{1\spred9}}}
 	\right)\\
&\overset{e}{=}		
  \left(
	\block{ \ch0\cch{\tilde5}\cch{1\spred0}\cch{1\spred5}}{\ch{e}\ch{\underline{1\spred9}}\cch{e}\cch{e}}
	\bigg|
	\block{ \ch{1}\ch6\cch{1\spred1}\cch{1\spred6}}{\ch{d}\ch{d}\cch{d}\cch{d}}
	\bigg|
	\block{ \ch2\ch{\tilde7}\cch{1\spred2}\cch{1\spred7}}{\ch{c}\cch{\underline{c}}\cch{c}\cch{c}}
	\bigg|
	\block{ \ch3\ch{8}\cch{1\spred3}\cch{1\spred8}}{\ch{b}\ch{b}\cch{b}\cch{b}}
	\bigg|
	\block{ \ch4\ch{9}\cch{1\spred4}\cch{1\spred9}}{\ch{a}\ch{a}\cch{a}\cch{\underline{a}}}
 	\right)\\	
&\overset{d}{=}
  \left(\block{ \ch0\cch{\tilde5}\cch{1\spred0}\cch{1\spred5}}{\ch{e}\cch{\underline{1\spred9}}\cch{e}\cch{e}}\right)
\!\!%	\times
  \left(\block{ \ch{1}\ch6\cch{1\spred1}\cch{1\spred6}}{\ch{d}\ch{d}\cch{d}\cch{d}}\right)
\!\!%	\times
	\left(\block{ \ch2\ch{\tilde7}\cch{1\spred2}\cch{1\spred7}}{\ch{c}\ch{\underline{c}}\cch{c}\cch{c}}\right)
\!\!%	\times
	\left(\block{ \ch3\ch{8}\cch{1\spred3}\cch{1\spred8}}{\ch{b}\ch{b}\cch{b}\cch{b}}\right)
\!\!%	\times
	\left(\block{ \ch4\ch{9}\cch{1\spred4}\cch{1\spred9}}{\ch{a}\ch{a}\cch{a}\cch{\underline{a}}}\right)\\
&\overset{b}{=}Z_{n+1}\times Y_{n+1}\times X_{n+1}\times Y_{n+1}\times Z_{n+1}.
% &\quad (\text{by } G_{100}\times G_{111}\times X_{1111}\times G_{111}\times G_{011})
\end{align*}
\end{example}
\begin{example}\label{example4}
Consider $m=5n+1$, $\ell=5n$ and the type $\String{edcaab}$ which satisfies condition \eqref{cond:type}. 
We have 
\begin{align*}
&\mathfrak{J}_{5n+1,5n}^{edcaab}\\
&\overset{w}{=}
\left(
	\block{ \ch0\ch{5}\cch{1\spred0}\cch{1\spred5}}{\ch{e}\ch{e}\cch{e}\cch{\underline{b}}}
	\bigg|
	\block{ \ch{1}\ch6\cch{1\spred1}}{\ch{d}\ch{d}\cch{d}}
	\bigg|
	\block{ \ch2\ch{7}\cch{1\spred2}}{\ch{c}\ch{c}\cch{c}}
	\bigg|
	\block{ \ch{\tilde3}\ch{8}\cch{1\spred3}}{\ch{a}\ch{b}\cch{b}}
	\bigg|
	\block{ \ch4\ch{9}\cch{1\spred4}}{\ch{a}\ch{a}\cch{a}}
 	\right)	\\
&\overset{a}{=}
\left(
	\block{ \ch0\ch{5}\cch{1\spred0}\cch{1\spred5}}{\ch{e}\ch{e}\cch{e}\cch{\underline{b}}}
	\bigg|
	\block{ \ch{1}\ch6\cch{1\spred1}}{\ch{d}\ch{d}\cch{d}}
	\bigg|
	\block{ \ch2\ch{7}\cch{1\spred2}}{\ch{c}\ch{c}\cch{c}}
	\bigg|
	\block{ \ch{\tilde3}\ch{8}\cch{1\spred3}}{\ch{a}\ch{b}\cch{b}}
	\bigg|
	\block{ \ch4\ch{9}\cch{1\spred4}\cch{1\spred9}}{\ch{a}\ch{a}\cch{a}\cch{\underline{1\spred9}}}
 	\right)	\\
&\overset{e}{=}
\left(
	\block{ \ch0\ch{5}\cch{1\spred0}\cch{1\spred5}}{\ch{e}\ch{e}\cch{e}\cch{\underline{1\spred9}}}
	\bigg|
	\block{ \ch{1}\ch6\cch{1\spred1}}{\ch{d}\ch{d}\cch{d}}
	\bigg|
	\block{ \ch2\ch{7}\cch{1\spred2}}{\ch{c}\ch{c}\cch{c}}
	\bigg|
	\block{ \ch{\tilde3}\ch{8}\cch{1\spred3}}{\ch{\underline{b}}\ch{b}\cch{b}}
	\bigg|
	\block{ \ch4\ch{9}\cch{1\spred4}\cch{1\spred9}}{\ch{a}\ch{a}\cch{a}\cch{\underline{a}}}
 	\right)	\\
&\overset{d}{=}
\left(\block{ \ch0\ch{5}\cch{1\spred0}\cch{1\spred5}}{\ch{e}\ch{e}\cch{e}\cch{\underline{1\spred9}}}\right)
\left(	\block{ \ch{1}\ch6\cch{1\spred1}}{\ch{d}\ch{d}\cch{d}}\right)
\left(	\block{ \ch2\ch{7}\cch{1\spred2}}{\ch{c}\ch{c}\cch{c}}\right)
\left(	\block{ \ch{\tilde3}\ch{8}\cch{1\spred3}}{\ch{\underline{b}}\ch{b}\cch{b}}\right)
\left(	\block{ \ch4\ch{9}\cch{1\spred4}\cch{1\spred9}}{\ch{a}\ch{a}\cch{a}\cch{\underline{a}}}\right)\\
&\overset{b}{=}
Y_{n}\times Y_n \times Y_n \times X_{n} \times Z_{n+1}.
% &\quad (\text{by } Z_{1010}\times G_{001}\times G_{001}\times G_{000}\times G_{011})
\end{align*}
\end{example}

{\it Notation 1.} In the above compressed writing, the letter $w,a,e,d,b$ over the symbol $``="$ means that the equality is obtained by substep $3(w)$, $3(a)$, $3(e)$, $3(d)$, $3(b)$ respectively. 

\smallskip

{\it Notation 2.} In the compressed writing the integer $n$ is represented by
the explicit value $3$. Hence, the second block in the first equality in Example \ref{example1} has the following meaning:
$$
\bigg|\block{ \ch1\ch{6}\cch{1\spred1}}{\ch{d}\ch{d}\cch{d}}\bigg|
:=
\bigg|\block{ \ch1\ch{6}\cch{1\spred1}  \cch{1\spred6} \cdots 5n-4}{\ch{d}\ch{d}\cch{d} \cch{d} \cdots \quad \cch{d}\quad }\bigg|.
$$
Also, the added biletter $\binom {19}{\underline{19}}$  (see Substep 3(a))
in the second equality in Example \ref{example1} 
means $\binom {5n+4}{\underline{5n+4}}$.

\smallskip

{\it Substep 3(w). Rewrite the set.}
For each permutation $\sigma$ from $\SymJ_{m,\ell}^t$,
we reorder the biletters of $\sigma$ such that
$\binom {i}{\sigma_i}$ is on the left of $\binom j{\sigma_j}$ if
$i\mod d < j\mod d$, or if $i\equiv j\mod d$ and $i<j$.
Then, we replace each letter $y\in a_{k}$ in the bottom line by $a_{k}$.
To facilitate readability, vertical bars are inserted 
between the biletters $\binom {i}{\sigma_i}$ and
$\binom {j}{\sigma_j}$ such that $i\not\equiv j \pmod d$. We get a biword $w$, denoted by $\rho(\sigma)=w$, called {\it shape} of $\sigma$.

Applying this operation on the following  permutation $\sigma\in 
\mathfrak{J}_{5n+1,5n+1}^{adbca}$ considered in Example \ref{example1} 
\begin{equation}
\sigma=
\left(
\block{\ch{0}\ch{1}\ch{2}\ch{3}\cch{4}\cch{5}\cch{6}\ch{7}\cch{8}\cch{9}\cch{1\spred0}\cch{1\spred1}\cch{1\spred2}\cch{1\spred3}\cch{1\spred4}\cch{1\spred5}}
{\ch{4}\ch{3}\ch{2}\ch{7}\cch{1\spred5}\cch{0}\cch{1\spred3}\ch{1}\cch{1\spred1}\cch{1\spred0}\cch{1\spred4}\cch{8}\cch{1\spred2}\cch{6}\cch{5}\cch{9}}
\right), 
\end{equation}
we get the shape $\rho(\sigma)=w$, where
\begin{equation}  
	w=
\left(
	\block{ \ch0\ch{5}\cch{1\spred0}\cch{1\spred5}}{\ch{e}\ch{a}\cch{e}\cch{e}}
	\bigg|
	\block{ \ch{1}\ch6\cch{1\spred1}}{\ch{d}\ch{d}\cch{d}}
	\bigg|
	\block{ \ch2\ch{7}\cch{1\spred2}}{\ch{c}\ch{b}\cch{c}}
	\bigg|
	\block{ \ch{3}\ch{8}\cch{1\spred3}}{\ch{c}\ch{b}\cch{b}}
	\bigg|
	\block{ \ch4\ch{9}\cch{1\spred4}}{\ch{a}\ch{a}\cch{a}}
 	\right).
\end{equation}

{\it Notation 3.} In the compressed writing, the above shape $w$ represents also the set $\rho^{-1}(w)$  of all the permutations $\sigma$ such that $\rho(\sigma)=w$.

Each permutation $\sigma\in \SymJ_{5n+1,5n+1}^{adbca}$ contains exactly 
three unsociable biletters of form $\binom aa,\binom cb, \binom dc$, denoted by $\binom{i_0}{j_0},\binom{i_1}{j_1},\binom{i_2}{j_2},$ respectively. 
So that, for example, in the block
$$
\bigg|\block{ \ch2\ch{7}\cch{1\spred2}}{\ch{c}\ch{b}\cch{c}}\bigg|
$$
there is exactly one letter $\String{b}$ in the bottom line. All other letters are~$\String{c}$. However, the position of the letter $\String{b}$ is not fixed.
The shape of another permutation may contain the block
$$
\bigg|\block{ \ch2\ch{7}\cch{1\spred2}}{\ch{b}\ch{c}\cch{c}}\bigg|
\text{\qquad or\qquad }
\bigg|\block{ \ch2\ch{7}\cch{1\spred2}}{\ch{c}\ch{c}\cch{b}}\bigg|.
$$

\smallskip
{\it Notation 4.} The underlined bileters $\binom {i}{\underline {a_j}}$ in the shape of a permutation~$\sigma$ means that there is no constraint $i+\sigma_i\in J$ for the corresponding biletters $\binom {i}{\sigma_i}$ of~$\sigma$. 
All other biletters of~$\sigma$ must satisfy 
the latter constraint.

In the first equality of each calculation, there is no underlined biletter 
if $m=\ell$ (Example \ref{example1}) or exactly one underlined biletter if $0\leq \ell\leq  m-1$ (Examples \ref{example2} and \ref{example3}). In the latter case, the underscore sign 
indicates
the position of $\ell$.

\smallskip
{\it Notation 5.}
The shape $w$, with a tilde sign $\tilde{}$ over a biletter $\binom {\tilde{\imath}}{a_j}$,
represents the sum of all shapes $w'$ which are obtained from $w$
by moving the letter $a_j$, including the underscore sign if it is underlined, to other non-underlined position in the block. For examples, we write (see Example~\ref{example1})
$$
\bigg|\block{ \ch2\ch{\tilde7}\cch{1\spred2}}{\ch{c}\ch{b}\cch{c}}\bigg|
:=
\bigg|\block{ \ch2\ch{7}\cch{1\spred2}}{\ch{b}\ch{c}\cch{c}}\bigg|
+
\bigg|\block{ \ch2\ch{7}\cch{1\spred2}}{\ch{c}\ch{b}\cch{c}}\bigg|
+
\bigg|\block{ \ch2\ch{7}\cch{1\spred2}}{\ch{c}\ch{c}\cch{b}}\bigg|,
$$
and (see Example \ref{example2})
$$
\bigg|
	\block{ \ch{\tilde1}\ch6\cch{1\spred1}\cch{1\spred6}}{\ch{c}\ch{d}\cch{d}\cch{\underline{a}}} \bigg|
:=
\bigg|
	\block{ \ch{1}\ch6\cch{1\spred1}\cch{1\spred6}}{\ch{c}\ch{d}\cch{d}\cch{\underline{a}}} \bigg|
	+
\bigg|
	\block{ \ch{1}\ch6\cch{1\spred1}\cch{1\spred6}}{\ch{d}\ch{c}\cch{d}\cch{\underline{a}}} \bigg|
	+
\bigg|
	\block{ \ch{1}\ch6\cch{1\spred1}\cch{1\spred6}}{\ch{d}\ch{d}\cch{c}\cch{\underline{a}}} \bigg|,
$$
$$
	\bigg|
	\block{ \ch{\tilde1}\ch6\cch{1\spred1}\cch{1\spred6}}{\ch{\underline{d}}\ch{d}\cch{d}\cch{\underline{1\spred8}}}
	\bigg|
	:=
	\bigg|
	\block{ \ch{1}\ch6\cch{1\spred1}\cch{1\spred6}}{\ch{\underline{d}}\ch{d}\cch{d}\cch{\underline{1\spred8}}}
	\bigg|
	+
	\bigg|
	\block{ \ch{1}\ch6\cch{1\spred1}\cch{1\spred6}}{\ch{d}\ch{\underline{d}}\cch{d}\cch{\underline{1\spred8}}}
	\bigg|
	+
	\bigg|
	\block{ \ch{1}\ch6\cch{1\spred1}\cch{1\spred6}}{\ch{d}\ch{d}\cch{\underline{d}}\cch{\underline{1\spred8}}}
	\bigg|.
$$
Notice that there is at most one tilde in each block by Lemma \ref{lemma:eqsum0}

\smallskip

{\it Substep 3(a). Add biletters.}
For each $\sigma\in \SymJ_{dn+h,\ell}^{t}$, we add all biletters 
$\binom ii$ such that $\max\{dn+h,dn+d-h\}\le i \le dn+d-1$. 
Thus,
the number of occurrences of $a_{j}$ in the bottom row becomes the same as 
the number of occurrences of $a_{d-j-1}$ for any $j\in N|_{d}$.

For instance, the bottom row of the right-hand side of $\overset {w}{=}$ in Example~\ref{example1} contains $4\times a, 3\times b, 3\times c, 3\times d, 3\times e$.
By adding the biletter $\binom {19}{19}$ to the shape 
the number of occurrences of $a$ in the bottom row becomes the same as 
the number of occurrences of~$e$ (since $19$ is also an $\String{e}$). 
The added biletter in the shape
is still represented by $\binom {19}{19}$, instead of 
$\binom {19}{e}$. Notice that it is underlined (see Notation 4).
\smallskip

{\it Substep 3(e). Exchange.} 
Consider all the biletters of the permutation~$\sigma$,
which
are unsocial, or which were added in Substep 3(a),
or still which have the specific form $\binom {\ell}{\underline{a_k}}$ with $0\leq \ell\leq m-1$. 
Exchange the bottom letters of those biletters
in such a way that
all the biletters
% including that of the specific form, 
will become friendly. 
In most of the cases, each block contains zero or one bad biletter. The only exception is 
the block containing the specific form $\binom {\ell}{\underline{a_k}}$ 
with $\ell=m-1$,
and another unsocial biletter $\binom {i}{a_j}$.
In such a case we put the appropriate {\it explicit letter}, which was added 
in Substep 3(a), under the letter~$\ell$ when the exchange was made.
The whole procedure is reversible.

In Examples \ref{example1} and \ref{example2}, the exchanges of the bad biletters are realized respectively as follows:
$$
\left(\cdots
\block{ \ch{\tilde5}  \ch{\tilde7} \ch{\tilde3}  \cch{1\spred9}}{\ch{a} \ch{b} \ch{c}  \cch{\underline{1\spred9}} }
 \cdots \right)
\qquad
\mapsto
\qquad
\left(\cdots\block{ \cch{\tilde5}  \ch{\tilde7} \ch{\tilde3}  \cch{1\spred9}}{\cch{\underline{1\spred9}} \ch{\underline{c}} \ch{\underline{b}} \cch{\underline{a}}  } \cdots \right)
$$
$$
\left(\cdots\block{ \ch{\tilde5}  \ch{\tilde1} \cch{1\spred6} \ch{\tilde7} \cch{1\spred8} \cch{1\spred9}}{\ch{d} \ch{c} \cch{\underline a} \ch{b}  \cch{\underline{1\spred8}}\cch{\underline{1\spred9}} } \cdots \right)
\qquad
\mapsto
\qquad
\left(\cdots\block{ \cch{\tilde5}  \ch{\tilde1} \cch{1\spred6} \ch{\tilde7} \cch{1\spred8} \cch{1\spred9}}{
\cch{\underline{1\spred9}}
\ch{\underline{d}}    \cch{\underline{1\spred8}} \ch{\underline{c}} \cch{\underline{b}} \cch{\underline{a}}  } \cdots \right)
$$
In the second example, the block 
{\small $\bigl|{\ch{\tilde1} \cch{1\spred6}\atop 
\ch{c} \cch{\underline a}}\bigr|$}
contains two bad biletters.
We put the {\it explicit letter} $18$ instead of the symbol `$d$' under the letter $\ell=16$.

\smallskip

{\it Substep 3(d). Decomposition.}
After Substep 3(e)  Exchange, 
the set $\SymJ_{dn+h,\ell}^{t}$ is decomposed, in a natural way,
into the 
Cartesian product of $d$ sets 
of biwords, which are called {\it atoms} in the sequel.
According to the situation of the tilde and underscore signs, the atoms
are classified into six families:
\begin{align*}
	(i):\quad &\left(\block{ \ch{1}\ch6\cch{1\spred1}}{\ch{d}\ch{d}\cch{d}}\right),
  \left(\block{ \ch{1}\ch6\cch{1\spred1}\cch{1\spred6}}{\ch{d}\ch{d}\cch{d}\cch{d}}\right),
	\left(\block{ \ch3\ch{8}\cch{1\spred3}\cch{1\spred8}}{\ch{b}\ch{b}\cch{b}\cch{b}}\right),
\left(	\block{ \ch2\ch{7}\cch{1\spred2}}{\ch{c}\ch{c}\cch{c}}\right);\\
	(i'): \quad &\left(\block{ \ch0\ch{5}\cch{1\spred0}\cch{1\spred5}}{\ch{e}\ch{e}\cch{e}\cch{\underline{1\spred9}}}\right); \\
(ii):\quad &\left(	\block{ \ch4\ch{9}\cch{1\spred4}\cch{1\spred9}}{\ch{a}\ch{a}\cch{a}\cch{\underline{a}}}\right),
\left(	\block{ \ch3\ch{8}\cch{1\spred3}\cch{1\spred8}}{\ch{b}\ch{b}\cch{b}\cch{\underline{b}}}\right);\\
(ii'):\quad &\left(\block{ \ch0\cch{\tilde5}\cch{1\spred0}\cch{1\spred5}}{\ch{e}\cch{\underline{1\spred 9}}\cch{e}\cch{e}}\right);\\
(iii):\quad &\left(	\block{ \ch2\ch{\tilde7}\cch{1\spred2}}{\ch{c}\ch{\underline{c}}\cch{c}}\right),
\left(\block{ \ch{\tilde3}\ch{8}\cch{1\spred3}}{\ch{\underline{b}}\ch{b}\cch{b}}\right),
	\left(\block{ \ch2\ch{\tilde7}\cch{1\spred2}\cch{1\spred7}}{\ch{c}\ch{\underline{c}}\cch{c}\cch{c}}\right);\\
	(iii'):\quad &\left(	\block{ \ch{\tilde1}\ch6\cch{1\spred1}\cch{1\spred6}}{\ch{\underline{d}}\ch{d}\cch{d}\cch{\underline{1\spred8}}}\right).
\end{align*}
It suffices to count the permutations in each atom.
\smallskip

{\it Substep 3(b). Beta transformation.}
The cardinalities of the atoms can be derived by means of the transformation $\beta$ defined in \eqref{equ:beta}.
We discuss the method according to the classification given in Substep 3(d). 

$(i)$ The atom
\begin{equation}\label{atom:i}
\SymA_{0}=
	\begin{pmatrix}
	1 & 6 & 11				\\
	d & d & d	
	\end{pmatrix}
\end{equation}
represents the set 
\begin{equation*}
	% \SymJ_0^{i_0}=
	 \left\{	\left. \begin{pmatrix}
			 1 & 6 & 11 	\\
			 \tau_{1} & \tau_{6} & \tau_{11}
	\end{pmatrix}\ \right|\  
	\begin{matrix}
		\{\tau_{1}, \tau_{6}, \tau_{11}\} = D|_{n}  \\
    i+\tau_{i}\in J \text{ for }i \in B|_{n} 
\end{matrix}
\right\}.
\end{equation*}

If $i+\sigma_i\in A_{d-1}$, then
$$
i+\sigma_i \in J 
\Longleftrightarrow  
\beta(i)+\beta(\sigma_i)=\beta(i+\sigma_i)\in \bar{J}
$$
by Lemma \ref{lemma:set} (iii). Applying the transformation $\beta$ to each letter in the top and bottom rows of each element $\tau$ of $\SymA_{0}$, we get a permutation $\lambda$ from $\SymJ_{n,n}$:
\begin{equation*}
	% \SymJ_0^{i_0}=
	 \left\{	\left.
 \beta\begin{pmatrix}
			 1 & 6 & 11 	\\
			 \tau_{1} & \tau_{6} & \tau_{11}
	\end{pmatrix}\ 
 =
 \begin{pmatrix}
			 0 & 1 & 2 	\\
			 \lambda_{1} & \lambda_{2} & \lambda_{3}
	\end{pmatrix}\ \right|\  
	\begin{matrix}
		\{\lambda_{0}, \lambda_{1}, \lambda_{2}\} = N|_{n}  \\
    i+\lambda_{i}\in \bar{J} \text{ for }i \in N|_{n} 
\end{matrix}
\right\}.
\end{equation*}
The above transformation is reversible and the atom 
$\SymA_{0}$ is in bijection with $\bar{\SymJ}_{n,n}$. 
Thus $\#\SymA_{0}=\#\bar{\SymJ}_{n,n}=\bar{Y}_{n}$.
For example, the second factor appearing in the right-hand side of the equality $\overset{b}{=}$ in Example~\ref{example1}, is equal to $\bar Y_n=Y_n$.

{\it Notation 6.} 
In the compressed writing, a set symbol may designate also the cardinality of the set, if necessary. For example, we may write $\Sym_4=24$.

\smallskip
$(i')$ The atom
\begin{equation}\label{atom:i'}
\SymA_{1}=
	\begin{pmatrix}
		0 & 5 & 10	& 15			\\
		e & e & e	  &\underline{19}
	\end{pmatrix}
\end{equation}
represents the set 
\begin{equation*}
	% \SymJ_0^{i_0}=
	 \left\{	\left. \begin{pmatrix}
		0 & 5 & 10	& 15			\\
			 \tau_{0} & \tau_{5} & \tau_{10} & 19
	\end{pmatrix}\ \right|\  
	\begin{matrix}
		\{\tau_{0}, \tau_{5}, \tau_{10}\} = E|_{n}  \\
    i+\tau_{i}\in J \text{ for }i \in A|_{n} 
\end{matrix}
\right\}.
\end{equation*}
Thus, it has the same cardinality of the atom $\SymA_0$ defined in \eqref{atom:i}.

\smallskip

$(ii)$ The atom
\begin{equation}\label{atom:ii}
\SymA_{2}=
	\begin{pmatrix}
	3 & 8 & 13 & 18				\\
	b & b & b  &	\underline{b}
	\end{pmatrix}
\end{equation}
is meant to be the set 
\begin{equation*}
	 \left\{	\left. \begin{pmatrix}
			 3 & 8 & 13 & 18 	\\
			 \tau_{3} & \tau_{8} & \tau_{16} & \underline{\tau_{18}}
	\end{pmatrix}\ \right|\  
	\begin{matrix}
		\{\tau_{3}, \tau_{8}, \tau_{13},\tau_{18}\} = B|_{n+1}  \\
    i+\tau_{i}\in J \text{ for }i \in D|_{n} 
\end{matrix}
\right\}.
\end{equation*}
% When $v_{d-1}=1$, 
Applying the transformation $\beta$ to each letter in each element $\sigma$ in the atom $\SymA_{2}$, we get a permutation $\lambda$ from $\bar{\SymJ}_{n+1,n}$: 
\begin{equation*}
	% \SymJ_0^{i_0}=
	 \left\{	\left.
  \begin{pmatrix}
			 0 & 1 & 2 	& 3\\
			 \lambda_{0} & \lambda_{1} & \lambda_{2} & \lambda_{3}
	\end{pmatrix}\ \right|\  
	\begin{matrix}
		\{\lambda_{0}, \lambda_{1}, \lambda_{2}, \lambda_{3}\} = N|_{n+1}  \\
		i+\lambda_{i}\in \bar{J} \ \text{ for }i \in N|_{n} 
\end{matrix}
\right\}.
\end{equation*}
The transformation is reversible, so that $\SymA_{2}$ is in bijection 
with $\bar{\SymJ}_{n+1,n}$. 
Hence, $\#\SymA_{2}=\#\bar{\SymJ}_{n+1,n}=\bar{Z}_{n+1}$.

\smallskip
$(ii')$ The atom
\begin{equation}\label{atom:ii'}
\SymA_{3}=
	\begin{pmatrix}
	0 & \tilde{5} & 10			& 15	\\
	e & \underline{19}& e	& e	\\
	\end{pmatrix}
\end{equation}
represents the set 
\begin{equation*}
	 \left\{\!	\left. \begin{pmatrix}
			 0 & 5 & 10 &15	\\
			 \tau_{0} & \tau_{5} & \tau_{10}	 & \tau_{15}
	\end{pmatrix}\ \! \right|
	\begin{matrix}
		\{\tau_{0}, \tau_{5}, \tau_{10}, \tau_{15}\} = E|_{n+1}  \\ 
		i+\tau_i \in J \text{ for $i\in A|_{n+1}$ such that $\tau_i\not=5n\!+\!4$}
\end{matrix}
\right\}.
\end{equation*}
By inverting the top and bottom rows of each biword, the above set becomes
\begin{equation*}
	% \SymJ_0^{i_0}=
	 \left\{	\left. \begin{pmatrix}
			 4 & 9 & 14 &19	\\
			 \rho_{4} & \rho_{9} & \rho_{14}	 & \underline{\rho_{19}}
	\end{pmatrix}\ \right|\  
	\begin{matrix}
		\{\rho_{4}, \rho_{9}, \rho_{14}, \rho_{19}\} = A|_{n+1}  \\ 
		i+\rho_i \in J \text{ for  $i\in E|_{n}$}
\end{matrix}
\right\},
\end{equation*}
which is equal to the atom 
\begin{equation}
	\begin{pmatrix}
	4 & 9 & 14 & 19	\\
	a & a & a & \underline a							\\
	\end{pmatrix}
\end{equation}
already studied in $(ii)$.

\smallskip

$(iii)$
The atom
\begin{equation}\label{atom:iii}
	\SymA_4=\begin{pmatrix}
	\tilde3 & 8 & 13				\\
	\underline{b} & b  &b	
	\end{pmatrix}
\end{equation}
represents the set
\begin{equation*}
	% \SymJ_0^{i_0}=
	\sum_{r\in D|_{n}}	 \left\{	\left. \begin{pmatrix}
			  3 & 8 & 13 	\\
			  \tau_{3} & \tau_{8} & \tau_{13}
	\end{pmatrix}\ \right|\  
	\begin{matrix}
		\{\tau_{3}, \tau_{8}, \tau_{13}\} = B|_{n}  \\
    i+\tau_{i}\in J \text{ for $i \in D|_{n}$ such that $i\neq r$}
\end{matrix}
\right\}.
\end{equation*}
Applying the transformation $\beta$,  the latter set becomes
\begin{equation*}
	% \SymJ_0^{i_0}=
	\sum_{r=0}^{n-1}	 \left\{	\left. \begin{pmatrix}
			  0 & 1 & 2 	\\
			  \lambda_{0} & \lambda_{1} & \lambda_{2}
	\end{pmatrix}\ \right|\  
	\begin{matrix}
		\{\lambda_{0}, \lambda_{1}, \lambda_{2}\} = N|_{n}  \\
    i+\tau_{i}\in \bar{J} \text{ for $i \in N|_{n}$ such that $i\neq r$}
\end{matrix}
\right\}.
\end{equation*}
Hence, 
\begin{equation}\label{equ:iii}
\SymA_{4}=
	\sum\limits_{r=0}^{n-1}\#\bar{\SymJ}_{n,i}=\bar{X}_{n}.
\end{equation}
\smallskip

$(iii')$
Similar to $(i')$, we have
$$
\left(	\block{ \ch{\tilde1}\ch6\cch{1\spred1}\cch{1\spred6}}{\ch{\underline{d}}\ch{d}\cch{d}\cch{\underline{1\spred8}}}\right)
=
\left(	\block{ \ch{\tilde1}\ch6\cch{1\spred1}}{\ch{\underline{d}}\ch{d}\cch{d}
}\right),
$$
which was already studied in $(iii)$.

% >>>

\section{Algorithm for evaluating the atoms}\label{sec:Algorithm}%<<<

Keep the same notations as in Section \ref{sec:CountType},
in particular, $m=dn+h$ $(h\in N|_d)$.
Let $k\in N|_d$. For simplicity, 
we write 
$$\SymP_Y:=\SymJ_{m,m}^{t}, \quad
\SymP_Z:=\SymJ_{m,m-1}^{t}, \quad
\SymP_X:=\sum_{\ell\in N|_m\cap A_k}\SymJ_{m,\ell}^{t}.$$
For each type $t$ the cardinality of 
the set $\SymP:=\SymP_Y$, $\SymP_Z$ or $\SymP_X$, is evaluated 
by the substeps $3(w)$, $3(a)$, $3(e)$, $3(d)$, $3(b)$, which are fully described in Section \ref{sec:CountType}.  As a consequence, the latter cardinality is equal to the product of $d$ factors (see Examples \ref{example1}--\ref{example4})
corresponding to the $d$ atoms respectively.
In this section, we show that the substeps in Step~3 can be combined onto one super-step. In fact, each factor can be 
evaluated directly by using a prefabricated dictionary.

% DOING 9

\begin{defi}\label{def:mu}
Let $i\in N|_d$ be a fixed integer. We define several parameters depending on
$i, k, d, m, t$, where $t=s_0s_1\ldots s_{d-1}$ (if $\SymP=\SymP_Y$) or 
$s_0s_1\ldots s_{d-1}s_d$ (if $\SymP=\SymP_X$ or $\SymP_Z$):
\begin{align*}
\eta_{0}&= 
\begin{cases} 
1,  &\text{if\ } i+1\leq h, \\ 
0, &\text{otherwise}\,; 
\end{cases}\\
\eta_{1}&= 
\begin{cases} 
1,  &\text{if\ } d-i\leq h, \\ 
0, &\text{otherwise}\,; 
\end{cases}\\
\eta_{2}&= 
\begin{cases} 
1,  &\text{if\ } s_i=a_{d-i-1}, \\ 
0, &\text{otherwise}\,;
\end{cases}\\
\eta_{3}&= 
\begin{cases} 
	1,  &\text{if\ } \SymP\not=\SymP_Y \text{\ and \ } s_d=a_{d-i-1}, \\ 
0, &\text{otherwise}\,. 
\end{cases}\\
%\mu_i=\Psi(\eta_0, \eta_1, \eta_2, \eta_3),
\nu &= 
\begin{cases} 
	\String{Z}, &\text{if\ } \SymP=\SymP_Z  \text{\ and\ }  m-1\in A_i ,\\
	\String{X}, &\text{if\ } \SymP=\SymP_X  \text{\ and\ }  k=i ,\\
	\String{G}, &\text{otherwise\ }; 
\end{cases}\\
\mu_i &= 
\begin{cases} 
	\Psi_Z(\eta_0, \eta_1, \eta_2, \eta_3), &\text{if\ } \nu=\String{Z},\\
	\Psi_X(\eta_0, \eta_1, \eta_2, \eta_3), &\text{if\ } \nu=\String{X},\\
	\Psi_G(\eta_0, \eta_1, \eta_2), &\text{if\ } \nu=\String{G}, 
\end{cases}
\end{align*}
where the explicit values of the functions 
$\Psi_Z, \Psi_X, \Psi_G$ are given in Table~\ref{tab:Psi}.
\begin{table}[!ht]\label{tab:Psi}
$$
\begin{array}{|c|cccccccc|}
\hline
\eta & 000 & 001 & 010 & 011 & 100 & 101 & 110 & 111\\
%\hline
\Psi_G(\eta)&\bar{X}_{n} & \bar{Y}_{n} & 0 & \bar{Z}_{n+1} & \bar{Z}_{n+1} & 0 & \bar{X}_{n+1} & \bar{Y}_{n+1}\\
\hline
\eta & 0000 & 0010 & 0100 & 0110 & 1000 & 1010 & 1100 & 1110\\
%\hline  
\Psi_Z(\eta)& 0 & \bar{Z}_{n} & 0 & 0 & \bar{X}_{n} & \bar{Y}_{n} & 0 & \bar{Z}_{n+1}\\
\hline
\eta & 0001 & 0011 & 0101 & 0111 & 1001 & 1011 & 1101 & 1111\\
%\hline
\Psi_Z(\eta)  & 0 & \bar{Z}_{n} & 0 & 0 & \bar{X}_{n} & 0 & 0 & \bar{Z}_{n+1}\\
\hline
\eta & 0000 & 0010 & 0100 & 0110 & 1000 & 1010 & 1100 & 1110\\
%\hline
\Psi_X(\eta)& 0 & \bar{X}_{n} & 0 & 0 & 0 & \bar{Z}_{n+1} & 0 & \bar{X}_{n+1}\\
\hline
\eta & 0001 & 0011 & 0101 & 0111 & 1001 & 1011 & 1101 & 1111\\
%\hline
\Psi_X(\eta)& 0 & \bar{X}_{n} & 0 & 0 & 0 & 0 & 0 & \bar{X}_{n+1}\\
\hline
\end{array}
$$
\caption{Explicit values of the functions $\Psi_Z, \Psi_X, \Psi_G$}
\end{table}
%$$
\end{defi}
\vskip -1em
%\smallskip

Notice that each permutation contains 
$n+\eta_0$ (resp. $n+\eta_1$) letters in $A_i$ (resp. in $A_{d-i-1}$).

\begin{example}
Consider $\SymP=\SymJ_{5n+1, 5n+1}^{adbca}$, studied in Example \ref{example1}.
In this case, $d=5, m=5n+1, h=1, \ell=m=5n+1, t=s_0s_1s_2s_3s_4=\String{adbca}$.
For $i=1$ we have $\eta_0=0, \eta_1=0, \eta_2=1$. Hence, 
$\mu_1= \Psi_G(0,0,1) = \bar Y_n$.
\end{example}

\begin{example}\label{examplePsi2}
Consider $\SymP=\SymJ_{5n+2, 5n+1}^{dcbbaa}$, studied in Example \ref{example2}.
In this case, $d=5, m=5n+2, h=2, \ell=m-1=5n+1\in A_{1}, t=s_0s_1s_2s_3s_4=\String{dcbbaa}$.
For $i=1$ we have $\eta_0=1, \eta_1=0, \eta_2=0, \eta_3=0$. So that  $\mu_1= \Psi_Z(1,0,0,0) = \bar X_n$.
\end{example}
\begin{example}
Consider $\SymP=\sum_{\ell\in C|_{n+1}}\SymJ_{5n+4, \ell}^{adcbac}$, studied in Example \ref{example3}.
In this case, $d=5, m=5n+4, h=4, \ell\in A_{2}, t=s_0s_1s_2s_3s_4=\String{adcbac}$.
For $i=2$ we have $\eta_0=1, \eta_1=1, \eta_2=1, \eta_3=1$ and  $\mu_2= \Psi_X(1,1,1,1) = \bar X_{n+1}$.
\end{example}

\begin{thm}\label{thm:atom:Psi}
With the above notations, the cardinality of 
the set $\SymP:=\SymP_Y, \SymP_Z, \SymP_X$ is equal to
\begin{equation}\label{equ:mu}
\#\SymP=\mu_0 \times \mu_1 \times \mu_2 \times \cdots \times \mu_{d-1}.
\end{equation}
\end{thm}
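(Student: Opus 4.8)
The plan is to recognize \eqref{equ:mu} as a compression of the whole of Step~3 into a single dictionary lookup, and then to verify that dictionary entry by entry. The starting point is the decomposition of Substep~$3(d)$: once the biletters of a permutation in $\SymP$ have been rewritten, augmented and exchanged as in Substeps $3(w)$, $3(a)$ and $3(e)$, the set $\SymP$ becomes the Cartesian product of $d$ atoms, the $i$-th of which collects the biletters whose top entries lie in $A_i$. Hence $\#\SymP=\prod_{i=0}^{d-1}\#\SymA^{(i)}$, and the theorem reduces to the local identity $\#\SymA^{(i)}=\mu_i$ for each $i\in N|_d$. Everything therefore comes down to showing that the cardinality of a single atom is correctly predicted by the indicators of Definition~\ref{def:mu}.

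First I would unwind the meaning of the indicators attached to the index $i$. The class $A_i$ furnishes $n+\eta_0$ top entries to $\SymA^{(i)}$ (since $\#(A_i\cap N|_m)=n+1$ exactly when $i+1\le h$), while its friendly target class $A_{d-i-1}$ offers $n+\eta_1$ bottom slots; the indicator $\eta_2$ records whether the type letter $s_i$ is the friendly letter $a_{d-i-1}$, i.e.\ whether the block is entirely friendly or carries one unsociable biletter; and the pair $(\nu,\eta_3)$ records whether the constraint-free position $\ell$ lies in the present block and, if so, whether the bottom $a_{s_d}$ of the associated specific biletter falls into the same target class $A_{d-i-1}$. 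With this reading, the family among (i)--(iii$'$) to which $\SymA^{(i)}$ belongs, together with the size label $n$ or $n+1$ attached to it, is determined by the tuple $(\eta_0,\eta_1,\eta_2,\eta_3,\nu)$.

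The body of the proof is then a finite verification that feeds each tuple through the $\beta$-transformation of Lemma~\ref{lemma:set}(iii), as in Substep~$3(b)$, and reads off the resulting $\bar X$-, $\bar Y$-, $\bar Z$- or zero count. For $\nu=\String{G}$ the block avoids the free position, so after the exchange it is a plain friendly or tilde atom and its cardinality depends only on $\eta_0,\eta_1,\eta_2$: when $\eta_2=1$ the $n+\eta_0$ friendly tops are matched into the $n+\eta_1$ slots, giving $\bar Y_{n+\eta_0}$ if $\eta_0=\eta_1$, the one-slack count $\bar Z_{n+1}$ if $(\eta_0,\eta_1)=(0,1)$, and $0$ if $(\eta_0,\eta_1)=(1,0)$; when $\eta_2=0$ the unsociable biletter is absorbed by the exchange into a dropped constraint, giving $\bar X_{n+\eta_0}$ if $\eta_0=\eta_1$ and $0$ otherwise. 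These eight outcomes are exactly the row $\Psi_G$ of Table~\ref{tab:Psi}. The cases $\nu=\String{Z}$ and $\nu=\String{X}$ are treated in the same spirit, the difference being that the block now carries the underscored position---so a $\bar Z$-type count replaces the $\bar Y$-type one---and that for $\SymP_X$ one additionally sums over the admissible choices of $\ell\in A_k$ before applying $\beta$; tracking these modifications yields the sixteen entries each of $\Psi_Z$ and $\Psi_X$.

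The delicate point, and where I expect the genuine work to lie, is precisely the $\nu\in\{\String{Z},\String{X}\}$ analysis and the role of $\eta_3$. In the block containing $\ell$ the bottom of the specific biletter $\binom{\ell}{a_{s_d}}$ must be accounted for before the exchange can make the block friendly: when $\eta_3=1$ this bottom already lies in the friendly target $A_{d-i-1}$ and merges into the friendly count, whereas when $\eta_3=0$ it must first be swapped out, shifting the effective size of the atom. One has to check, tuple by tuple, that Substeps $3(a)$ and $3(e)$ redistribute the letters so that the surviving atom is in bijection with the announced $\bar X$, $\bar Y$ or $\bar Z$ set, and---crucially---that every tuple not realizable by an actual permutation, typically because a set of $n+\eta_0$ tops would have to inject into fewer friendly slots, is assigned the value $0$ in $\Psi_Z$ and $\Psi_X$. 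Once this compatibility bookkeeping is verified across all sixteen tuples of each table, the product formula \eqref{equ:mu} follows.
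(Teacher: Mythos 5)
Your overall strategy coincides with the paper's: reduce \eqref{equ:mu} to the local statement that each atom of the decomposition in Substep $3(d)$ has cardinality $\mu_i$, then verify Table \ref{tab:Psi} entry by entry by pushing a single block through Substeps $3(w)$--$3(b)$ and the $\beta$-transformation of Lemma \ref{lemma:set}(iii); your reading of the indicators $\eta_0,\eta_1,\eta_2,\eta_3,\nu$ is also the intended one. However, two of the outcomes you announce do not survive the check you propose. First, your closed-form rule for $\Psi_G$ (``when $\eta_2=0$ \dots\ $\bar X_{n+\eta_0}$ if $\eta_0=\eta_1$ and $0$ otherwise'') returns $0$ on the tuple $(1,0,0)$, whereas $\Psi_G(1,0,0)=\bar Z_{n+1}$: there the block has $n+1$ tops in $A_i$, only $n$ friendly bottoms and one unsociable biletter, and Substep $3(a)$ adjoins an underlined bottom $j_n$ which, after the exchange, lands under the tilde position and turns the atom into a copy of $\bar{\SymJ}_{n+1,n}$, i.e.\ $\bar Z_{n+1}$, not an empty set. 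This entry is not a vacuous case: it is the ubiquitous factor {\tt [Zm:G100]} in the program outputs, so getting it wrong breaks essentially every recurrence.

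Second, your justification of the zero entries --- ``not realizable by an actual permutation, typically because a set of $n+\eta_0$ tops would have to inject into fewer friendly slots'' --- is not the mechanism that makes them zero, and for most of them it is false as stated. More than half of the table consists of zeros, and for the bulk of these (the case $(\String G,0,1,0)$, the eight $\eta$-tuples listed in the paper's proof for $\nu=\String Z,\String X$, and the cases $X1000$, $X1001$) the relevant sets need not be empty: the missing friendly bottoms of $A_{d-i-1}$ can be supplied by unsociable biletters from other blocks or by the specific biletter, which forces a repeated non-friendly letter in the type and hence, by the involutions of Lemmas \ref{lemma:eq0} and \ref{lemma:eqsum0}, a set of \emph{even} cardinality rather than an empty one. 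Identity \eqref{equ:mu} only holds modulo $2$ for those entries, and the type-level parity involutions are the indispensable ingredient; a purely local counting obstruction inside one atom cannot deliver them. You would need to add this global argument to close the verification.
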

\smallskip

For example, the 
set $\SymP=\sum_{\ell\in C|_{n+1}}\SymJ_{5n+4, \ell}^{adcbac}$, 
studied in Example \ref{example3}, is evaluated by means of Theorem \ref{thm:atom:Psi} as follows:
\begin{align*}
&\quad \sum_{\ell\in C|_{n+1}}\mathfrak{J}_{5n+4,\ell}^{adcbac}\\
 &=\mu_0\, \mu_1\, \mu_2\, \mu_3\, \mu_4\\
&=
\Phi_G(1,0,0)\, \Phi_G(1,1,1)\, \Psi_X(1,1,1,1)\, \Psi_G(1,1,1)\, \Phi_G(0,1,1)\\
&=Z_{n+1}\,Y_{n+1}\, X_{n+1}\, Y_{n+1}\, Z_{n+1}.
\end{align*}

\goodbreak

By Theorem \ref{thm:atom:Psi}, the 
procedure {\tt EvalAtoms(P,t,h,k)} figured in Algorithm 1,
which
evaluates
the cardinality of 
the set $\SymP:=\SymP_Y$, $\SymP_Z$ or $\SymP_X$ 
for each type $t$, is described in Algorithm 2.

\begin{program}[htp]\label{algo:atom}
\begin{verbatim}
def EvalAtoms(P,t,h,k):
  Prod=1
  for i in Ch:
    nu='G'
    if P=='PZ' and i==(h+d-1)%d: nu='Z'
    if P=='PX' and i==k:  nu='X'
    eta=(i+1<=h, d-i<=h, t[i]==d-i-1)
    if nu=='X' or nu=='Z': eta=eta+(t[d]==d-i-1,)
    Prod=Prod*Psi(nu, eta)
  return Prod
\end{verbatim}
\caption{Evaluating the atoms}
\end{program}

% DOING 4
\begin{proof}[Proof of Theorem \ref{thm:atom:Psi}]
When we speak of {\it case}, we refer to a tuple  $(\nu=\String{G}, \eta_0, \eta_1, \eta_2)$,
$(\nu=\String{Z}, \eta_0, \eta_1, \eta_2, \eta_3)$ 
or $(\nu=\String{Z}, \eta_0, \eta_1, \eta_2, \eta_3)$,
which depends on $i,k,d,m,t,\SymP$ by Definition \ref{def:mu}.
The case is reproduced without non-significant symbols. For example,
we write $X1000$ for the case $(\String{X},1,0,0,0)$.

In fact, the cases  
$ G101, Z1011, X1011 $
do not appear in product \eqref{equ:mu}
and can take any value, in particular, zero.
In the cases $X1000$ and $X1001$, 
we have $\#\SymP=0$ by Lemma \ref{lemma:eqsum0}, so that
Identity \eqref{equ:mu} is true.
In the cases $(\String{G},0,1,0)$ and $(\nu, \eta)$ for 
$$
\begin{array}{lcl}
	\nu&=&\String{Z}, \String{X};\\
	\eta&=&(0,0,0,0), (0,1,0,0), (0,1,1,0), (1,1,0,0), \\
		 & &(0,0,0,1), (0,1,0,1), (0,1,1,1), (1,1,0,1),
\end{array}
$$
Lemma \ref{lemma:eq0} implies that $\#\SymP=0$. 
Hence,
Identity \eqref{equ:mu} is true.
%
% DOING8
All other cases are proved as follows.

\medskip

The evaluations of product \eqref{equ:mu} are explained in Section \ref{sec:CountType}, see Examples \ref{example1}--\ref{example4}. The factors $\mu_0, \mu_1,\ldots, \mu_{d-1}$ are obtained
	at the same time by proceeding with the substeps $3(w)$, $3(a)$, $3(e)$, $3(d)$, $3(b)$. In fact, we can evaluate each sole factor $\mu_i$ without keeping in mind the others. 
	For this purpose, we extract all biletters such that either its top letter is in $A_i$ or its bottom letter is $a_{d-i-1}$ in the first two substeps $3(w)$ and $3(a)$.  

Again, consider $i=1$ and $\SymP=\SymJ_{5n+2, 5n+1}^{dcbbaa}$.
We extract all biletters such that either its top letter is in $\{1,6,11,16, \ldots\}$ or its bottom letter is $d$ in the first two substeps $3(w)$ and $3(a)$ 
of Example \ref{example2}.  We have 
\begin{equation*}
\mathfrak{J}_{5n+2,5n+1}^{dcbbaa}
\overset{w}{=}
\left(
	\block{ \ch{\tilde5}}{\ch{d}}
	\bigg|
	\block{ \ch{\tilde1}\ch6\cch{1\spred1}\cch{1\spred6}}{\ch{c}\ch{d}\cch{d}\cch{\underline{a}}}
	\bigg|
	?
	\bigg|
	?
	\bigg|
	?
 	\right)	
\overset{a}{=}
\left(
	\block{ \ch{\tilde5}}{\ch{d}}
	\bigg|
	\block{ \ch{\tilde1}\ch6\cch{1\spred1}\cch{1\spred6}}{\ch{c}\ch{d}\cch{d}\cch{\underline{a}}}
	\bigg|
	?
	\bigg|
	\block{ \cch{1\spred8}}{\cch{\underline{1\spred8}}}
	\bigg|
	?
 	\right)	,
\end{equation*}
and
\begin{equation*}
\mathfrak{J}_{5n+2,5n+1}^{dcbbaa}
\overset{e}{=}
   \left(
	\block{ \cch{\tilde5}}{\cch{\underline{1\spred9}}}
	\bigg|
	\block{ \ch{\tilde1}\ch6\cch{1\spred1}\cch{1\spred6}}{\ch{\underline{d}}\ch{d}\cch{d}\cch{\underline{1\spred8}}}
	\bigg|
	?
	\bigg|
	\block{ \cch{1\spred8}}{\cch{\underline{b}}}
	\bigg|
	?
 	\right)
\overset{d}{=}\ 
	?\!\left(	\block{ \ch{\tilde1}\ch6\cch{1\spred1}\cch{1\spred6}}{\ch{\underline{d}}\ch{d}\cch{d}\cch{\underline{1\spred8}}}\right)
?
\ \overset{b}{=} \ 
? X_n ? ? ?.
% &\quad (\text{by } G_{100}\times Z_{1000}\times G_{000}\times G_{011}\times G_{011})
\end{equation*}
It means that $\mu_1=X_n=\bar X_n$.
On the other hand, this case corresponds to the tuple $(\String{Z},1,0,0,0)$ that takes the value $\bar X_n$, as shown in Example~\ref{examplePsi2}.

In the sequel, 
$i_0=i,\, i_1=d+i,\, i_2=2d+i, \ldots, i_{n-1}=(n-1)d+i,\, i_n=nd+i$ are integers from $A_i$. Let $j=d-i-1,\, j_n=dn+j$.
We prove \eqref{equ:mu} case by case using the method described in the above example. Without loss of generality, the proof is illustrated for $n=3$.
\begin{align*}
% --
G000 &:
%\sum_{\tilde\imath \in A|_{n}}
\left(
\begin{array}{ccc|c}
i_0 & \tilde{\imath}_1& i_2 & ? \\
a_j &    ?            & a_j & a_j
\end{array}
\right)
\overset{e}{=}
%\sum_{\tilde\imath \in A|_{n}}
\left(
\begin{array}{ccc}
i_0 & \tilde{\imath}_1 &i_2\\
a_j &\underline{a_j} &a_j
\end{array}
\right)
\overset{b}{=}\bar{X}_{n},\\
% --
G001&:
\begin{pmatrix}i_0&i_1&i_2\\a_j&a_j&a_j\end{pmatrix}
\overset{b}{=}\bar{Y}_{n}, \\
% --
G011&:
\left(
\begin{array}{c c c|c}
i_0&i_1&i_2&?\\a_j&a_j&a_j&a_j
\end{array}
\right)\\
&\overset{a}{=}
\left(
\begin{array}{c c c|c}
i_0&i_1&i_2&?\\a_j&a_j&a_j&a_j
\end{array}
\right)
\begin{pmatrix}i_{n}\\\underline{?}\end{pmatrix}
\overset{e}{=}
\begin{pmatrix}i_0&i_1&i_2&i_{n}\\a_j&a_j&a_j&\underline{a_j}\end{pmatrix}
\overset{b}{=}\bar{Z}_{n+1},\\
% --
G100&:
%\sum_{\tilde{\imath}\in A|_{n+1} }
\begin{pmatrix}i_0&\tilde{\imath}_1&i_2&i_3\\a_j&?&a_j&a_j\end{pmatrix}\\
&\overset{a}{=}
%%\sum_{\tilde{\imath}\in A|_{n+1} }
\begin{pmatrix}i_0&\tilde{\imath}_1&i_2&i_3\\a_j&?&a_j&a_j\end{pmatrix}
\begin{pmatrix}?\\\underline{j_n}\end{pmatrix}
\overset{e}{=}
%\sum_{\tilde{\imath}\in A|_{n+1} }
\begin{pmatrix}i_0&\tilde{\imath}_1&i_2&i_3\\a_j&j_n&a_j&a_j\end{pmatrix}
\overset{b}{=}
\bar{Z}_{n+1},\\
% --
G110&:
%\sum_{\tilde \imath \in A|_{n+1}}
\left(
\begin{array}{cccc|c}
	i_0 &\tilde{\imath}_1 &i_2 &i_3 &? \\
	a_j &? &a_j &a_j & a_j
\end{array}
\right)
\overset{e}{=}
%\sum_{\tilde i \in A|_{n+1}}
\left(
\begin{array}{cccc}
	i_0 &\tilde{\imath}_1 &i_2 &i_3  \\
	a_j &\underline {a_j} &a_j &a_j 
\end{array}
\right)
\overset{b}{=}\bar{X}_{n+1},\\
%--
G111&:
\begin{pmatrix}i_0&i_1&i_2 & i_3\\a_j&a_j&a_j&a_j\end{pmatrix}
\overset{b}{=}\bar{Y}_{n+1},\\
% --
  Z0010&:
  \left(
  \begin{array}{ccc|c}
		i_0&i_1&i_{n-1}&?\\a_j&a_j&?&a_j
  \end{array}
  \right)
\overset{e}{=}
	\begin{pmatrix}i_0&i_1&i_{n-1}\\a_j&a_j&\underline{a_j}\end{pmatrix}
\overset{b}{=}\bar{Z}_{n},\\
% --
  Z1110&:
  \left(
  \begin{array}{cccc|c}
    i_0&i_1&i_2& i_n&?\\a_j&a_j&a_j&?&a_j
  \end{array}
  \right)
\overset{e}{=}
  \begin{pmatrix}i_0&i_1&i_2 &i_n\\a_j&a_j&a_j&\underline{a_j}\end{pmatrix}
\overset{b}{=}\bar{Z}_{n+1},\\
%--
  Z0011&:
		\begin{pmatrix}i_0&i_1&i_{n-1}\\a_j&a_j&\underline{a_j}\end{pmatrix}
\overset{b}{=}\bar{Z}_{n},\\
%--
  Z1111&:
   \begin{pmatrix}i_0&i_1&i_2 &i_n\\a_j&a_j&a_j&\underline{a_j}\end{pmatrix}
\overset{b}{=}\bar{Z}_{n+1},\\
% --
  Z1000&:
  %\sum_{\tilde\imath\in A|_{n}}
  \left(
  \begin{array}{cccc|c}
    i_0&\tilde{\imath}_1&i_2 &i_n & ?\\
    a_j&?&a_j&\underline{?}& a_j
  \end{array}
  \right)\\
&\overset{a}{=}
  %\sum_{\tilde\imath\in A|_{n}}
  \left(
  \begin{array}{cccc|c}
    i_0&\tilde{\imath}_1&i_2 &i_n & ?\\
    a_j&?&a_j&\underline{?}& a_j
  \end{array}
  \right)
  \begin{pmatrix}? \\\underline{j_n}\end{pmatrix}
\overset{e}{=}
  %\sum_{\tilde\imath\in A|_{n}}
  \begin{pmatrix}
    i_0&\tilde{\imath}_1&i_2 &i_n \\
    a_j&\underline{a_j}&a_j&\underline{j_n}
  \end{pmatrix}
\overset{b}{=}\bar{X}_{n}, \\
%\end{align*}
%\goodbreak
%\begin{align*}
% --
  Z1001&:
  %\sum_{\tilde\imath\in A|_{n}}
  \left(
  \begin{array}{cccc}
    i_0&\tilde{\imath}_1&i_2 &i_n \\
    a_j&?&a_j&\underline{a_j}
  \end{array}
  \right)\\
&\overset{a}{=}
  %\sum_{\tilde\imath\in A|_{n}}
  \left(
  \begin{array}{cccc}
    i_0&\tilde{\imath}_1&i_2 &i_n \\
    a_j&?&a_j&\underline{a_j}
  \end{array}
  \right)
  \begin{pmatrix}? \\\underline{j_n}\end{pmatrix}
\overset{e}{=}
  %\sum_{\tilde\imath\in A|_{n}}
  \begin{pmatrix}
    i_0&\tilde{\imath}_1&i_2 &i_n \\
    a_j&\underline{a_j}&a_j&\underline{j_n}
  \end{pmatrix}
\overset{b}{=}\bar{X}_{n},\\
% --
  Z1010&:
  \left(
  \begin{array}{cccc}
    i_0&i_1&i_2 &i_n \\
    a_j&a_j&a_j&\underline{?}
  \end{array}
  \right)\\
&\overset{a}{=}
  \left(
  \begin{array}{cccc}
    i_0&i_1&i_2 &i_n \\
    a_j&a_j&a_j&\underline{?}
  \end{array}
  \right)
  \begin{pmatrix}? \\\underline{j_n}\end{pmatrix}
\overset{e}{=}
  \begin{pmatrix}
    i_0&i_1&i_2 &i_n \\
    a_j&a_j&a_j&\underline{j_n}
  \end{pmatrix}
\overset{b}{=}\bar{Y}_{n},\\
% --
  X0010&:
  %\sum_{\tilde\imath\in A|_{n}}
  \left(
  \begin{array}{ccc|c}
    i_0&\tilde{\imath}_1&i_2&?\\a_j&?&a_j&a_j
  \end{array}
  \right)
\overset{e}{=}
  %\sum_{\tilde\imath \in A|_{n}}
  \begin{pmatrix}i_0&\tilde{\imath}_1&i_2\\a_j&\underline{a_j}&a_j\end{pmatrix}
\overset{b}{=}\bar{X}_{n},\\
%--
  X0011&:
  %\sum_{\tilde\imath\in A|_{n}}
  \left(
  \begin{array}{ccc}
    i_0&\tilde{\imath}_1&i_2\\a_j&\underline{a_j}&a_j
  \end{array}
  \right)
\overset{b}{=}\bar{X}_{n},\\
% --
  X1110&:
  %\sum_{\tilde\imath\in A|_{n+1}}
  \left(
  \begin{array}{cccc|c}
    i_0&\tilde{\imath}_1&i_2&i_3&?\\a_j&?&a_j&a_j&a_j
  \end{array}
  \right)
\overset{e}{=}
  %\sum_{\tilde\imath \in A|_{n+1}}
  \begin{pmatrix}i_0&\tilde{\imath}_1&i_2&i_3\\a_j&\underline{a_j}&a_j&a_j\end{pmatrix}
\overset{b}{=}\bar{X}_{n+1},\\
	%--
  X1111&:
  %\sum_{\tilde\imath\in A|_{n}}
  \begin{pmatrix}
    i_0&\tilde{\imath}_1&i_2 & i_3\\a_j&\underline{a_j}&a_j&a_j
  \end{pmatrix}
\overset{b}{=}\bar{X}_{n+1},\\
% --
  X1010&:
  %\sum_{\tilde\imath\in A|_{n+1}}
  \begin{pmatrix}
    i_0&\tilde{\imath}_1&i_2 &i_3 \\
    a_j&\underline{?}&a_j&a_j
  \end{pmatrix}\\
&\overset{a}{=}
  %\sum_{\tilde\imath\in A|_{n+1}}
  \begin{pmatrix}
    i_0&\tilde{\imath}_1&i_2 &i_3 \\
    a_j&\underline{?}&a_j&a_j
  \end{pmatrix}
  \begin{pmatrix}? \\\underline{j_n}\end{pmatrix}
\overset{e}{=}
   %\sum_{\tilde\imath\in A|_{n+1}}
  \begin{pmatrix}
    i_0&\tilde{\imath}_1&i_2 &i_3 \\
    a_j&\underline{j_n}&a_j&a_j
  \end{pmatrix}
\overset{b}{=}\bar{Z}_{n+1}.\qedhere
\end{align*}

\end{proof}

%>>>

\section{Implementation and outputs}%<<<
Our program {\tt Apwen.py} is an
implementation of Algorithms 1 and~2 in {\tt Python}.
The proofs of Lemmas \ref{lemma:main3XYZ}, \ref{lemma:main3UVW} and \ref{lemma:main5} are achieved by 
the following Outputs 1--3 of the program {\tt Apwen.py} respectively.
For simplicity, the expression $n+1$ is reproduced by letter $m$.
Thus, {\tt "Y(3n+1) = Vn Wm"} means the following recurrence relation
$$
Y_{3n+1} = V_n W_{n+1},
$$
which appeared in Lemma \ref{lemma:main3XYZ}.
The calculations made in Examples \ref{example1}--\ref{example4} in Section \ref{sec:CountType} 
can be found in Output 3, types {\tt 168 adbca, 219 dcbbaa, 145 adcbac, 213 edcaab} respectively.

\medskip

\hrule\smallskip
\noindent
{\bf Output 1} {\tt "python Apwen.py 3"}
\smallskip
\hrule\smallskip
{\footnotesize
\begin{verbatim}
v= [1, -1, -1]     direction = XYZ -> UVW
P= [1]
Q= [2]
J= [0, 3, 5, 6, 8, 9, 12, 14, 15, 18, 21, 23, 24, 27, ...]
K= [1, 2, 4, 7, 10, 11, 13, 16, 17, 19, 20, 22, 25,  ...]

 k : 3N+0 
 1 cbac: [Un:X0011] [Vn:G001] [Vn:G001]
 2 ccab: [Un:X0010] [Un:G000] [Vn:G001]
 3 ccba: [Un:X0010] [Un:G000] [Un:G000]
 k : 3N+1 
 4 abbc: [Un:G000] [Un:X0010] [Un:G000]
 5 cbab: [Vn:G001] [Un:X0011] [Vn:G001]
 6 cbba: [Vn:G001] [Un:X0010] [Un:G000]
 k : 3N+2 
 7 abac: [Un:G000] [Vn:G001] [Un:X0010]
 8 acab: [Un:G000] [Un:G000] [Un:X0010]
 9 cbaa: [Vn:G001] [Vn:G001] [Un:X0011]
X(3n+0) =  Un

 k : 3N+0 
 10 cbaa: [Wm:X1010] [Vn:G001] [Wm:G011]
 k : 3N+1 
 11 abab: [Wm:G100] [Un:X0011] [Wm:G011]
 k : 3N+2 
X(3n+1) =  Un Wm + Vn Wm

 k : 3N+0 
 12 cbaa: [Wm:X1010] [Vm:G111] [Wm:G011]
 k : 3N+1 
 13 abab: [Wm:G100] [Um:X1111] [Wm:G011]
 k : 3N+2 
X(3n+2) =  Um Wm + Vm Wm

 14 acb: [Un:G000] [Un:G000] [Un:G000]
 15 cba: [Vn:G001] [Vn:G001] [Vn:G001]
Y(3n+0) =  Un + Vn

 16 aba: [Wm:G100] [Vn:G001] [Wm:G011]
Y(3n+1) =  Vn Wm

 17 aba: [Wm:G100] [Vm:G111] [Wm:G011]
Y(3n+2) =  Vm Wm

 18 abac: [Un:G000] [Vn:G001] [Wn:Z0010]
 19 acab: [Un:G000] [Un:G000] [Wn:Z0010]
 20 cbaa: [Vn:G001] [Vn:G001] [Wn:Z0011]
Z(3n+0) =  Un Vn Wn + Un Wn + Vn Wn

 21 abac: [Un:Z1001] [Vn:G001] [Wm:G011]
 22 acab: [Un:Z1000] [Un:G000] [Wm:G011]
 23 cbaa: [Vn:Z1010] [Vn:G001] [Wm:G011]
Z(3n+1) =  Un Vn Wm + Un Wm + Vn Wm

 24 abab: [Wm:G100] [Wm:Z1111] [Wm:G011]
Z(3n+2) =  Wm
\end{verbatim}
}
\medskip

\goodbreak
\hrule\smallskip
\noindent
{\bf Output 2} {\tt "python Apwen.py -3"}
\smallskip
\hrule\smallskip
\nobreak
{\footnotesize
\begin{verbatim}
v= [1, -1, -1]     direction = UVW -> XYZ
P= [2]
Q= [1]
J= [1, 2, 4, 7, 10, 11, 13, 16, 17, 19, 20, 22, 25, ...]
K= [0, 3, 5, 6, 8, 9, 12, 14, 15, 18, 21, 23, 24, 27, ...]

 k : 3N+0 
 1 cacb: [Xn:X0010] [Xn:G000] [Xn:G000]
 2 cbac: [Xn:X0011] [Yn:G001] [Yn:G001]
 3 cbca: [Xn:X0010] [Yn:G001] [Xn:G000]
 k : 3N+1 
 4 bbac: [Xn:G000] [Xn:X0010] [Yn:G001]
 5 bbca: [Xn:G000] [Xn:X0010] [Xn:G000]
 6 cbab: [Yn:G001] [Xn:X0011] [Yn:G001]
 k : 3N+2 
 7 baac: [Xn:G000] [Xn:G000] [Xn:X0010]
 8 caab: [Yn:G001] [Xn:G000] [Xn:X0010]
 9 cbaa: [Yn:G001] [Yn:G001] [Xn:X0011]
U(3n+0) =  Xn

 k : 3N+0 
 10 caab: [Zm:X1010] [Xn:G000] [Zm:G011]
 11 cbaa: [Zm:X1010] [Yn:G001] [Zm:G011]
 k : 3N+1 
 12 bbaa: [Zm:G100] [Xn:X0010] [Zm:G011]
 k : 3N+2 
U(3n+1) =  Yn Zm

 k : 3N+0 
 13 caab: [Zm:X1010] [Xm:G110] [Zm:G011]
 14 cbaa: [Zm:X1010] [Ym:G111] [Zm:G011]
 k : 3N+1 
 15 bbaa: [Zm:G100] [Xm:X1110] [Zm:G011]
 k : 3N+2 
U(3n+2) =  Ym Zm

 16 bac: [Xn:G000] [Xn:G000] [Xn:G000]
 17 cba: [Yn:G001] [Yn:G001] [Yn:G001]
V(3n+0) =  Xn + Yn

 18 baa: [Zm:G100] [Xn:G000] [Zm:G011]
V(3n+1) =  Xn Zm

 19 baa: [Zm:G100] [Xm:G110] [Zm:G011]
V(3n+2) =  Xm Zm

 20 baac: [Xn:G000] [Xn:G000] [Zn:Z0010]
 21 caab: [Yn:G001] [Xn:G000] [Zn:Z0010]
 22 cbaa: [Yn:G001] [Yn:G001] [Zn:Z0011]
W(3n+0) =  Xn Yn Zn + Xn Zn + Yn Zn

 23 baac: [Xn:Z1001] [Xn:G000] [Zm:G011]
 24 caab: [Yn:Z1010] [Xn:G000] [Zm:G011]
 25 cbaa: [Yn:Z1010] [Yn:G001] [Zm:G011]
W(3n+1) =  Xn Yn Zm + Xn Zm + Yn Zm

 26 bbaa: [Zm:G100] [Zm:Z1110] [Zm:G011]
W(3n+2) =  Zm

\end{verbatim}
}

\goodbreak
\hrule\smallskip
\noindent
{\bf Output 3} {\tt "python Apwen.py 5"} (extract)
\smallskip
\hrule\smallskip
\nobreak
{\footnotesize
\begin{verbatim}
v= [1, -1, -1, -1, 1]    direction = XYZ -> XYZ
P= [1, 4]
Q= [2, 3]
J= [0, 3, 4, 5, 8, 10, 13, 15, 18, 19, 20, 23, 24, 25, 28, 29]
K= [1, 2, 6, 7, 9, 11, 12, 14, 16, 17, 21, 22, 26, 27]

 ...
 k : 5N+2 
 144 accbad: [Zm:G100] [Xm:G110] [Xm:X1110] [Ym:G111] [Zm:G011]
 145 adcbac: [Zm:G100] [Ym:G111] [Xm:X1111] [Ym:G111] [Zm:G011]
 146 adccab: [Zm:G100] [Ym:G111] [Xm:X1110] [Xm:G110] [Zm:G011]
 147 dccaab: [Zm:G100] [Xm:G110] [Xm:X1110] [Xm:G110] [Zm:G011]
 148 dccbaa: [Zm:G100] [Xm:G110] [Xm:X1110] [Ym:G111] [Zm:G011]
 k : 5N+3 
 149 acbbad: [Zm:G100] [Xm:G110] [Xm:G110] [Xm:X1110] [Zm:G011]
 150 acdbab: [Zm:G100] [Xm:G110] [Xm:G110] [Xm:X1111] [Zm:G011]
 151 adbbac: [Zm:G100] [Ym:G111] [Xm:G110] [Xm:X1110] [Zm:G011]
 152 adcbab: [Zm:G100] [Ym:G111] [Ym:G111] [Xm:X1111] [Zm:G011]
 153 dcbbaa: [Zm:G100] [Xm:G110] [Xm:G110] [Xm:X1110] [Zm:G011]
 k : 5N+4 
X(5n+4) =  Ym Zm

...
 167 acdba: [Zm:G100] [Xn:G000] [Xn:G000] [Yn:G001] [Zm:G011]
 168 adbca: [Zm:G100] [Yn:G001] [Xn:G000] [Xn:G000] [Zm:G011]
 169 adcba: [Zm:G100] [Yn:G001] [Yn:G001] [Yn:G001] [Zm:G011]
 170 dcbaa: [Zm:G100] [Xn:G000] [Xn:G000] [Xn:G000] [Zm:G011]
Y(5n+1) =  Xn Zm + Yn Zm

...
 212 edbcaa: [Yn:Z1010] [Yn:G001] [Xn:G000] [Xn:G000] [Zm:G011]
 213 edcaab: [Yn:Z1010] [Yn:G001] [Yn:G001] [Xn:G000] [Zm:G011]
 214 edcbaa: [Yn:Z1010] [Yn:G001] [Yn:G001] [Yn:G001] [Zm:G011]
Z(5n+1) =  Xn Yn Zm + Xn Zm + Yn Zm

 215 acbbad: [Zm:G100] [Xn:Z1001] [Xn:G000] [Zm:G011] [Zm:G011]
 216 acdbab: [Zm:G100] [Xn:Z1000] [Xn:G000] [Zm:G011] [Zm:G011]
 217 adbbac: [Zm:G100] [Yn:Z1010] [Xn:G000] [Zm:G011] [Zm:G011]
 218 adcbab: [Zm:G100] [Yn:Z1010] [Yn:G001] [Zm:G011] [Zm:G011]
 219 dcbbaa: [Zm:G100] [Xn:Z1000] [Xn:G000] [Zm:G011] [Zm:G011]
Z(5n+2) =  Xn Yn Zm + Xn Zm + Yn Zm

...
\end{verbatim}
}
\smallskip
\hrule\medskip

\medskip

The proof of that $F_{13}$ is Apwenian takes 11 hours by using 
the program {\tt Apwen.py} on a modern personal computer. 
For proving that $F_{17a}$ and $F_{17b}$ are Apwenian, it was necessary to 
rewrite the program in the {\tt C} language with some optimizations.
The running times of the two programs are reproduced in the following table:
\begin{equation*}
\begin{array}{|c|c|c|c|c|c|c|}
	\hline
	{\bf f}   & F_3  & F_5  & F_{11}  & F_{13}  & F_{17a}, F_{17b}  & F_{19}\\
	\hline
	{\tt Python}&<1 s & <1 s & 11 m & 11 h & \infty & \infty\\
	{\tt C}     &<1 s & <1 s & 16 s & 29 m & 7\, \text{days} \times 24\, \text{CPUs} & \infty \\
	\hline
\end{array}
\end{equation*}

% >>>
%\vfill\eject
\goodbreak

\bibliographystyle{plain}
\bibliography{\jobname}

\begin{thebibliography}{10}

\bibitem{APWW1998}
J.-P. Allouche, J.~Peyri\`ere, Z.-X. Wen, and Z.-Y Wen.
\newblock Hankel determinants of the {T}hue--{M}orse sequence.
\newblock {\em Ann. Inst. Fourier, Grenoble}, 48:1--27, 1998.

\bibitem{AS2003}
J.-P. Allouche and J.~Shallit.
\newblock {\em Automatic sequences}.
\newblock Cambridge University Press, Cambridge, 2003.
\newblock Theory, applications, generalizations.

\bibitem{AS1999}
Jean-Paul Allouche and Jeffrey Shallit.
\newblock The ubiquitous {P}rouhet-{T}hue-{M}orse sequence.
\newblock In {\em Sequences and their applications ({S}ingapore, 1998)},
  Springer Ser. Discrete Math. Theor. Comput. Sci., pages 1--16. Springer,
  London, 1999.

\bibitem{BHWY2015}
Y.~Bugeaud, G.-N. Han, Wen Z.-Y., and Yao J.-Y.
\newblock {H}ankel determinants, {P}ad\'e approximations, and irrationality
  exponents.
\newblock {\em to appear in IMRN, 28 pages}, 2015.

\bibitem{Bu2011}
Yann Bugeaud.
\newblock On the rational approximation to the {T}hue-{M}orse-{M}ahler numbers.
\newblock {\em Ann. Inst. Fourier (Grenoble)}, 61(5):2065--2076 (2012), 2011.

\bibitem{BH2014}
Yann Bugeaud and Guo-Niu Han.
\newblock A combinatorial proof of the non-vanishing of {H}ankel determinants
  of the {T}hue-{M}orse sequence.
\newblock {\em Electron. J. Combin.}, 21(3):Paper 3.26, 17, 2014.

\bibitem{Fl80}
P.~Flajolet.
\newblock Combinatorial aspects of continued fractions.
\newblock {\em Discrete Math.}, 32(2):125--161, 1980.

\bibitem{OEIS:A010060}
OEIS Foundation.
\newblock Sequence {A}010060.
\newblock {\em The On-Line Encyclopedia of Integer Sequences}, 2015.

\bibitem{OEIS:A106400}
OEIS Foundation.
\newblock Sequence {A}106400.
\newblock {\em The On-Line Encyclopedia of Integer Sequences}, 2015.

\bibitem{Wiki:pm:sequence}
Wikimedia Foundation.
\newblock $\pm 1$-sequence.
\newblock {\em Wikipedia: The Free Encyclopedia}, 2015.

\bibitem{Wiki:ThueMorse}
Wikimedia Foundation.
\newblock {T}hue--{M}orse sequence.
\newblock {\em Wikipedia: The Free Encyclopedia}, 2015.

\bibitem{Han2014}
Guo-Niu Han.
\newblock Hankel continued fraction and its applications.
\newblock {\em arXiv:1406.1593}, 2014.

\bibitem{Han2015hankel}
Guo-Niu Han.
\newblock Hankel determinant calculus for the {T}hue-{M}orse and related
  sequences.
\newblock {\em J. Number Theory}, 147:374--395, 2015.

\bibitem{HanWu2015}
Guo-Niu Han and Wen Wu.
\newblock Evaluations of the {H}ankel determinants of a {T}hue--{M}orse-like
  sequence.
\newblock {\em International J. Number Theory}, 2015.

\bibitem{Kr98}
C.~Krattenthaler.
\newblock Advanced determinant calculus.
\newblock {\em S\'em. Lothar. Combin.}, 42:Art. B42q, 67 pp. (electronic),
  1999.
\newblock The Andrews Festschrift (Maratea, 1998).

\bibitem{Kr05}
C.~Krattenthaler.
\newblock Advanced determinant calculus: a complement.
\newblock {\em Linear Algebra Appl.}, 411:68--166, 2005.

\bibitem{Mu23}
T.~Muir.
\newblock {\em The theory of determinants in the historical order of
  development}, volume~4.
\newblock Macmillan, London, 1906-1923.

\bibitem{Wa48}
H.~S. Wall.
\newblock {\em Analytic {T}heory of {C}ontinued {F}ractions}.
\newblock D. Van Nostrand Company, Inc., New York, N. Y., 1948.

\end{thebibliography}

\end{document}